\documentclass[a4paper,11pt]{article}
\usepackage[T1]{fontenc}
\usepackage[latin1]{inputenc}

\usepackage{amsmath,amsfonts}
\usepackage{amsthm}
\usepackage{amssymb, amsopn}
\usepackage[USenglish]{babel}
\usepackage{amsfonts}
\usepackage{enumerate}
\usepackage{verbatim}
\usepackage{graphicx}
\usepackage{verbatim}
\usepackage[left=3cm, right=3cm, top=3cm, bottom=3cm]{geometry}
\usepackage{tikz}

\usepackage{csquotes}
\usepackage{mathscinet} 

\usepackage[final, 
 pdftex, 
 pdfpagelabels,
 pdfstartview = {FitH}, 
 bookmarks, 
 colorlinks, 
 plainpages = false, 
 linktoc=all, 
 linkcolor=black, 
 citecolor=blue, 
 urlcolor=blue, 
 filecolor=black 
 ]{hyperref}
\usepackage{cleveref}

\usetikzlibrary{matrix,arrows,decorations.pathmorphing,decorations.markings}

\theoremstyle{plain}
\newtheorem{theorem}{Theorem}[section]
\newtheorem{lemma}[theorem]{Lemma}
\newtheorem{prop}[theorem]{Proposition}
\newtheorem{cor}[theorem]{Corollary}

\newtheorem{introthm}{Theorem}

\theoremstyle{definition}
\newtheorem{definition}[theorem]{Definition}

\newtheorem{remark}[theorem]{Remark}

\theoremstyle{remark}

\renewcommand{\tilde}{\widetilde}
\renewcommand{\bar}{\overline}
\newcommand{\bbC}{\mathbb{C}}
\newcommand{\bbD}{\mathbb{D}}

\newcommand{\bbN}{\mathbb{N}}

\newcommand{\bbR}{\mathbb{R}}
\newcommand{\bbZ}{\mathbb{Z}}
\newcommand{\bbLL}{\mathbb{LL}}

\newcommand{\raw}{\rightarrow}

\newcommand{\xra}{\xrightarrow}
\newcommand{\cug}{\subseteq}

\newcommand{\eps}{\varepsilon}
\newcommand{\xto}{\xrightarrow}

\newcommand{\undx}{{\underline{x}}}

\newcommand{\undw}{{\underline{w}}}
\newcommand{\undv}{{\underline{v}}}
\newcommand{\unde}{{\underline{e}}}
\newcommand{\undf}{{\underline{f}}}

\newcommand{\calC}{\mathcal{C}}

\newcommand{\calH}{\mathcal{H}}

\newcommand{\calK}{\mathcal{K}}

\newcommand{\calO}{\mathcal{O}}
\newcommand{\calP}{\mathcal{P}}

\newcommand{\calFl}{\mathcal{F}l}

\newcommand{\flipLL}{\raisebox{\depth}{\scalebox{1}[-1]{$\LL$}}}
\newcommand{\eid}{{id}}
\newcommand{\graff}{\mathcal{G}r}
\newcommand{\flaff}{\widehat{\calFl}}

\newcommand{\sbim}{{\mathbb{S}Bim}}

\newcommand{\ehat}{{\unde(\hat{\imath})}}



\newcommand{\frN}{\mathfrak{N}}

\newcommand{\barw}{{\bar{w}}}

\newcommand{\bfH}{\mathbf{H}}

\newcommand{\Address}{
 \bigskip{\footnotesize

 \textsc{Università di Pisa, Italy}\par\nopagebreak
 \textit{E-mail address}: \texttt{leonardo.patimo@unipi.it}
}}

\DeclareMathOperator{\defect}{def}

\DeclareMathOperator{\End}{End}

\DeclareMathOperator{\Sym}{Sym}

\DeclareMathOperator{\grdim}{grdim}
\DeclareMathOperator{\grrk}{grrk}

\DeclareMathOperator{\Hom}{Hom}
\DeclareMathOperator{\Iden}{Id}

\DeclareMathOperator{\Ker}{Ker}

\DeclareMathOperator{\spa}{span}
\DeclareMathOperator{\IH}{IH}

\DeclareMathOperator{\Trace}{Tr}
\DeclareMathOperator{\LL}{LL}
\DeclareMathOperator{\can}{can}
\DeclareMathOperator{\BS}{BS}
\DeclareMathOperator{\Coh}{H}
\newcommand{\tilLL}{\tilde{\LL}}\newcommand{\tilll}{\tilde{ll}}

\title{A Hom formula for Soergel Modules}

\author{Leonardo Patimo}

\begin{document}

\maketitle

\begin{abstract}
We study Soergel modules for arbitrary Coxeter groups. For infinite Coxeter groups, we show that the homomorphisms between Soergel modules are in general more than those coming from morphisms of Soergel bimodules. This result provides a negative answer to a question posed by Soergel in \cite{SoeKazhdana}.

We further show that the dimensions of the morphism spaces agree with the pairing in the Hecke algebra when Soergel modules are instead regarded as modules over the structure algebra. Moreover, we use this module structure to define a distinguished submodule of indecomposable Soergel bimodules that mimics the cohomology submodule of the intersection cohomology. Combined with the Hodge theory of Soergel bimodules, this can be used to extend results regarding the shape of Bruhat intervals, such as top-heaviness, to arbitrary Coxeter groups.
\end{abstract}

\section*{Introduction}

Let $(W,S)$ be a Coxeter system and $V$ be a representation of $W$. The category of Soergel bimodules, denoted $\sbim$, is the full subcategory of graded bimodules over the polynomial ring $R=\Sym(V)$, generated by direct summands of shifts of Bott--Samelson bimodules
$$\BS(\underline{s_1s_2\ldots s_k}):=R\otimes_{R^{s_1}}R\otimes_{R^{s_2}}R\otimes \ldots \otimes_{R^{s_k}}R(k)$$
where $s_i\in S$, $R^{s_i}$ denotes the subring of $s_i$-invariants and $(k)$ denotes a grading shift. Under certain assumptions on the representation $V$ (cf. \cite{SoeKazhdana,AbeBimodule}), indecomposable self-dual Soergel bimodules are parametrized by elements $w\in W$ and are denoted by $B_w$.

If $W$ is a Weyl group and $V$ is the geometric realization of $W$ over a field $\Bbbk$, then $B_w$ is isomorphic to $\IH_T^\bullet(X_w, \Bbbk)$, the torus equivariant intersection cohomology of the associated Schubert variety $X_w$ with coefficients in $\Bbbk$. 
While the theory of Soergel bimodules can be developed for any Coxeter group, in the general case there is often no known underlying geometric object. Nevertheless, in many aspects these bimodules still behave as if they were the intersection cohomology of some varieties. 

When $\Bbbk=\bbR$, key results from Hodge theory continue to hold for Soergel bimodules. This includes fundamental theorems such as the hard Lefschetz theorem and the Hodge--Riemann bilinear relations \cite{EWHodge}, which hold even in the setting of singular Soergel bimodules \cite{PatSingular}. Moreover, one can carry out further constructions inspired by Hodge theory, such as defining the analogue of the Néron--Severi Lie algebra, in this purely algebraic setting \cite{PatNeron}.

In this paper we describe another aspect in which $B_w$ behaves like $\IH_T^\bullet(X_w, \Bbbk)$. Just as $\IH_T^\bullet(X_w, \Bbbk)$ contains the ordinary $T$-equivariant cohomology $\Coh_T^\bullet(X_w, \Bbbk)$ as a submodule, we show that $B_w$ contains a distinguished submodule $H_w\subset B_w$ playing an analogous role. The submodule $H_w$ is graded free as a left $R$-module, and in degree $2d$ it has a basis indexed by elements in $W$ of length $d$.

We give two equivalent constructions of this submodule.
The first approach uses Fiebig's framework \cite{FieCombinatorics} which realizes Soergel bimodules as certain sheaves on the moment graph. In this setting, we realize $H_w$ as the cyclic module over the structure algebra $Z$ of the Bruhat graph of $W$ generated by the identity.

The second approach uses Libedinsky's light leaves \cite{LibSur}. Light leaves form bases of the homomorphism spaces between Bott--Samelson bimodules, and applying them to $1^\otimes:=1\otimes 1\otimes \ldots\otimes 1$ we also obtain a basis of the bimodule $\BS(\undw)$ itself. Light leaves for $\undw$ are parametrized by sequences $\unde\in \{0,1\}^{\ell(\undw)}$. The construction of the light leaves depends then on a decoration of this sequence. For each index $i$ we decorate the light leaf with a label $U$ we have $s_1^{e_1}s_2^{e_2}\ldots s_{i-1}^{e_{i-1}}s_i>s_1^{e_1}s_2^{e_2}\ldots s_{i-1}^{e_{i-1}}$ and with a label $D$ otherwise. 
 In this setting, we realize $H_w$ as the orthogonal, with respect to the intersection form of $\BS(\undw)$, of all non-canonical light leaves, i.e. those light leaves whose decoration contains at least one $D$.

We show in \Cref{Z1=H} that these two constructions of the bimodule $H_w$ coincide. Moreover, the submodule $H_w$ is naturally endowed with a distinguished basis, which corresponds geometrically to the Schubert basis, i.e. the basis given by the fundamental classes of smaller Schubert varieties.

\begin{introthm}
The indecomposable Soergel bimodule $B_w$ has a submodule $H_w$ which is free as a left (or right) $R$-module with basis $\{\mathcal{P}_x\}_{x\leq w}$. In particular, its graded rank over $R$ is
\[\grrk H_w=v^{-\ell(w)}\sum_{x\leq w}v^{2\ell(x)}.\]
\end{introthm}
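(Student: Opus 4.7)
I would fix a reduced expression $\undw=(s_1,\dots,s_k)$ of $w$ and argue inside $\BS(\undw)$. By \Cref{Z1=H} it is enough to carry out the proof with $H_w$ viewed as the orthogonal, under the intersection form, of the non-canonical light leaves applied to $1^\otimes$. Libedinsky's theorem gives a graded free left $R$-basis $\{\LL_{\unde}(1^\otimes)\}_{\unde\in\{0,1\}^k}$ of $\BS(\undw)$, partitioned into canonical leaves (all labels $U$) and non-canonical ones (at least one label $D$).

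The first step matches canonical leaves with Bruhat-smaller elements. The subword characterization of the Bruhat order, i.e.\ the existence and uniqueness of distinguished subexpressions in the sense of Deodhar, produces a bijection $\unde\leftrightarrow x=s_1^{e_1}\cdots s_k^{e_k}$ between canonical sequences and $\{x:x\leq w\}$, and forces exactly $\ell(x)$ positions to carry $e_i=1$. A routine degree bookkeeping through the elementary light-leaf constructions then places the canonical leaf for $x$ in degree $2\ell(x)-\ell(w)$, and, since the full light-leaf set is already $R$-free, the $R$-span of the canonical leaves is automatically graded free with the claimed graded rank $v^{-\ell(w)}\sum_{x\leq w}v^{2\ell(x)}$.

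It remains to identify this span with $H_w$ and to extract the Schubert basis $\{\mathcal{P}_x\}$. Non-degeneracy of the intersection form on $\BS(\undw)$ forces the left $R$-rank of $H_w$ to equal $\#\{x\leq w\}$, matching the rank count above. Canonical leaves, however, are not individually orthogonal to all non-canonical leaves, so one must correct them. The plan is to define $\mathcal{P}_x$ by downward induction on the Bruhat order, setting $\mathcal{P}_w$ equal to the top canonical leaf and, for $x<w$, correcting the naive canonical leaf for $x$ by an $R$-combination of canonical leaves for $y>x$ chosen to annihilate every pairing against a non-canonical leaf. A parallel and more symmetric route uses the structure-algebra description: construct $\mathcal{P}_x$ via its Bruhat-graph localizations as the distinguished section supported on $\{y:y\geq x\}$ with value at $x$ equal to a product of positive roots, then verify that this section lies in $Z\cdot 1$.

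The principal technical obstacle is the verification just indicated---that the Schubert-like candidate genuinely belongs to $H_w$. In the light-leaves language this amounts to orthogonality against every non-canonical leaf, requiring control of the Gram matrix between canonical and non-canonical leaves; in the moment-graph language it amounts to showing that a carefully prescribed section of the Bruhat-graph sheaf is generated from $1$ by the structure algebra. Both routes ultimately rest on the local combinatorics of the Bruhat graph around $x$, and reconciling the two so as to keep the triangular leading term intact is, in my view, the main technical content of the proof.
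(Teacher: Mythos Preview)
Your proposal contains a genuine conceptual slip that propagates into an unnecessary detour. The light-leaf basis of $\BS(\undw)$ consists of the elements $ll_{\undw,\unde}=\flipLL_{\undw,\unde}(1^\otimes_{\undx})$, not $\LL_{\unde}(1^\otimes_{\undw})$ (the latter lands in $\BS(\undx)$, not in $\BS(\undw)$). More importantly, the degree bookkeeping is off by a sign: the canonical leaf for $x$ sits in degree $\ell(w)-2\ell(x)$, not $2\ell(x)-\ell(w)$. Hence the $R$-span of the canonical leaves has graded rank $v^{\ell(w)}\sum_{x\le w} v^{-2\ell(x)}$, the \emph{opposite} of what is required, and it is not the submodule $H_{\undw}$.

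The step the paper takes---and the one you are missing---is simply to pass to the dual basis. If $\{ll_{\undw,\unde}\}_{\unde}$ is the full light-leaf basis and $\{ll^*_{\undw,\unde}\}_\unde$ its dual with respect to the intersection form, then for any subset $J$ of sequences the orthogonal of $\bigoplus_{\unde\in J} R\,ll_{\undw,\unde}$ is exactly $\bigoplus_{\unde\notin J} R\,ll^*_{\undw,\unde}$. Taking $J$ to be the non-canonical sequences gives $H_{\undw}=\bigoplus_{x\le w} R\,ll^*_{\undw,\can_x}$ on the nose, with $\deg ll^*_{\undw,\can_x}=-\deg ll_{\undw,\can_x}=2\ell(x)-\ell(w)$. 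One then \emph{defines} $\mathcal P_{\undw,x}:=ll^*_{\undw,\can_x}$; no inductive correction procedure is needed, and the Gram-matrix analysis you flag as the ``principal technical obstacle'' evaporates. What does require work is showing that $\mathcal P_{\undw,x}$ is independent of the choices in the light-leaf construction and that it satisfies the same Pieri rule as $\mathcal P_x\in Z$, which is how the paper identifies $H_{\undw}$ with $Z\cdot 1^\otimes$ and places it inside $B_w$.
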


Thus, for any Coxeter groups the dimensions of the graded components of $H_w$, which are given by number of elements in $W$ of a certain length, behave like the Betti numbers of a projective variety. This immediately implies that the sequence of these numbers satisfies top-heaviness (extending \cite[Theorem A and C]{BEShape} for crystallographic Coxeter groups). Together with the results in \cite{PatNeron}, this also gives a Hodge-theoretic proof of Carrell--Peterson's criterion for the triviality of Kazhdan--Lusztig polynomials (\cite[Theorem D]{BEShape}).

As the structure algebra $Z$ and the modules $H_w$ are free as left $R$-modules, the quotients $\bar{Z}:=\Bbbk\otimes_R Z$ and $\bar{H_w}:=\Bbbk\otimes_R H_w$ also have Schubert bases. Any Soergel module $\bar{B}_w:=\Bbbk\otimes_R B_w$ is naturally a module over $\bar{Z}$. We claim that this is the ``correct'' module structure one should equip $\bar{B}_w$ with. In fact, using the Schubert basis we can show the module $\bar{B}_w$ remains indecomposable over $\bar{Z}$ and we are able to compute the spaces of homomorphisms:
\begin{introthm}[Soergel's hom formula for Soergel modules]\label{iB}
	Let $B,B'$ be Soergel bimodules. 
	Let $\calH(W,S)$ denote the Hekce algebra of $W$ and let $[-]:[\sbim] \xra{\sim}\calH(W,S)$ be the isomorphism offered by Soergel's categorification theorem (see \Cref{sct} and \Cref{Hiso} for details).
	Then
	\begin{equation}\label{hfi1}
		\Bbbk\otimes_R \Hom_{\sbim}^{\bullet}(B,B')\cong \Hom^{\bullet}_{\bar{Z}}(\bar{B},\bar{B'})
	\end{equation}
	and 
	\begin{equation}\label{hfi2}
		\grdim_{\Bbbk} \Hom^{\bullet}_{\bar{Z}}(\bar{B},\bar{B'})=(\bar{[B]},[B'])
	\end{equation}
$(-,-)$ is the standard pairing in the Hecke algebra (cf. \cite[Definition 3.13]{EMTWIntroduction}).
\end{introthm}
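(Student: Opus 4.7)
My plan is to deduce (hfi2) from (hfi1), and then to establish (hfi1) by combining a natural comparison map with injectivity and a matching upper bound. For the first step, Soergel's classical hom formula states that $\Hom^\bullet_{\sbim}(B,B')$ is graded free as a left $R$-module with graded rank $(\bar{[B]},[B'])$. Since tensoring a graded free $R$-module with $\Bbbk$ over $R$ preserves the graded rank as a graded $\Bbbk$-dimension, (hfi1) immediately implies (hfi2). Thus the substance of the theorem lies entirely in (hfi1).

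The natural map is constructed as follows. Every morphism $f\colon B\to B'$ in $\sbim$ descends to a $\Bbbk$-linear map $\bar{f}\colon \bar{B}\to\bar{B'}$. Since the structure algebra $Z$ acts on each Soergel bimodule intrinsically---via Fiebig's identification of $\sbim$ with sheaves on the Bruhat moment graph, under which bimodule morphisms commute with the $Z$-action---the reduction $\bar{f}$ is automatically $\bar{Z}$-linear. This yields a natural homomorphism
\[
\Phi\colon \Bbbk\otimes_R \Hom^\bullet_{\sbim}(B,B') \longrightarrow \Hom^\bullet_{\bar{Z}}(\bar{B},\bar{B'}).
\]
Injectivity of $\Phi$ follows from the freeness of $\Hom^\bullet_{\sbim}(B,B')$ and of $B,B'$ as left $R$-modules: a morphism descending to zero can be expanded in a homogeneous $R$-basis of $\Hom^\bullet_{\sbim}(B,B')$ with all coefficients in $R_+$, hence vanishes after $\Bbbk\otimes_R$.

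What remains, and constitutes the heart of the proof, is the matching upper bound
\[
\grdim_{\Bbbk}\Hom^\bullet_{\bar{Z}}(\bar{B},\bar{B'})\leq (\bar{[B]},[B']),
\]
after which equality and surjectivity of $\Phi$ follow from the graded dimension of the source. I would reduce to the indecomposable case $B=B_x$, $B'=B_y$ and invoke Theorem~A: both $\bar{H}_x$ and $\bar{H}_y$ are cyclic $\bar{Z}$-modules generated by $\bar{\mathcal{P}}_e$ and equipped with the Schubert basis $\{\bar{\mathcal{P}}_z\}$. Consequently, any $\bar{Z}$-linear map $\phi\colon \bar{B}_x\to \bar{B}_y$ is fully determined on $\bar{H}_x$ by the image $\phi(\bar{\mathcal{P}}_e)\in \bar{B}_y$, whose possible values are cut out by the annihilator of $\bar{\mathcal{P}}_e$ inside $\bar{Z}$.

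The genuinely hard part is extending this control from $\bar{H}_x$ to all of $\bar{B}_x$, since in general $\bar{H}_x$ is a proper submodule. Here I would work in the Bott--Samelson model $\BS(\undx)$ and use Libedinsky's light leaves basis. By the alternative construction of $H_x$ recalled in the introduction, the non-canonical light leaves (those carrying at least one $D$-label) span the orthogonal complement of $H_x$ with respect to the intersection form; I would compute the $\bar{Z}$-action on them explicitly, expressing them in terms of canonical light leaves. Pushing $\phi$ through the resulting relations should pin it down on all of $\bar{B}_x$ by its restriction to a space whose graded dimension matches $(\bar{[B_x]},[B_y])$ term-by-term with the Hecke pairing, closing the argument.
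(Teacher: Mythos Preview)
Your overall architecture (derive \eqref{hfi2} from \eqref{hfi1}, build the natural comparison map, then prove it is an isomorphism) is the same as the paper's. However, there are two genuine gaps.

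First, your injectivity argument is circular. You assert that ``a morphism descending to zero can be expanded in a homogeneous $R$-basis of $\Hom^\bullet_{\sbim}(B,B')$ with all coefficients in $R_+$''---but that is precisely the statement to be proved. Freeness of the Hom space alone does not tell you that a morphism $f$ with $f(B)\subset R_+B'$ lies in $R_+\cdot\Hom^\bullet_{\sbim}(B,B')$. The paper handles injectivity only \emph{after} the adjunction reduction to $B'=R$, where it becomes the concrete statement that $\Bbbk\otimes_R\Gamma_{\eid}B\hookrightarrow\Bbbk\otimes_R B$, which follows from freeness of the support subquotients.

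Second, and more seriously, your surjectivity strategy does not close. You correctly note that a $\bar Z$-map is determined on $\bar H_x$ by a single value, but $\bar H_x\subsetneq \bar B_x$ in general, and the sentence ``pushing $\phi$ through the resulting relations should pin it down'' is a hope, not an argument. Computing the $\bar Z$-action on non-canonical light leaves and extracting a dimension bound matching the Hecke pairing term-by-term is substantially harder than you suggest; there is no evident mechanism forcing those relations to be enough.

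The paper avoids this entirely by a different, much cleaner idea: it first proves that the translation functor $M\mapsto M\otimes_{\bar Z^s}\bar Z(1)$ on $\bar Z$-modules is self-adjoint (the parallel of the standard self-adjointness on the bimodule side). Using this adjunction simultaneously on both sides reduces the isomorphism \eqref{hfi1} to the single case $B'=R$. In that case the two Hom spaces are identified respectively with $\Gamma_{\eid}B$ and with $\bigcap_{x\neq\eid}\mathrm{Ann}(P_x)\subset\bar B$, and surjectivity becomes the concrete claim: if $b\in\bar B$ satisfies $P_x\cdot b=0$ for all $x\neq\eid$, then $b\in\Bbbk\otimes_R\Gamma_{\eid}B$. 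This is proved via the support filtration, using that multiplication by $\calP_x$ induces an isomorphism $\Gamma_{\leq h}B/\Gamma_{\leq h-1}B\xrightarrow{\sim}\Gamma_xB$ (where $x=w_h$ in a linear refinement of Bruhat order). The Schubert basis $\{\calP_x\}$ of $Z$ and Theorem~A enter exactly here, but the light leaves for $B_x$ never need to be analyzed directly. The adjunction step is the missing ingredient in your proposal.
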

We remark that the isomorphism  \eqref{hfi1} and the formula \eqref{hfi2} do not hold when $\Hom_{\bar{Z}}$ is replaced by $\Hom_{-R}$ (i.e., when $\bar{B}$ and $\bar{B'}$ are only considered with their right $R$-module structure), at least when $W$ is infinite. 
We provide two counterexamples for this. 

For $W$ the affine Weyl group of type $\tilde{A}_2$, we exhibit an indecomposable Soergel bimodule $B_w$ such that  $\bar{B}_w$ is decomposable as a right $R$-module (\Cref{cex1}).

For $W$ the universal Coxeter group on three generators, we find an element $w$ of length $6$ and an element $b\in \bar{B_w}$ for which there is an $R$-module morphism $\bbR\to \bar{B_w}$ sending $1$ to $b$ which is not induced by any morphism of bimodules $R\to B_w$ (\Cref{cex2}). This second counterexample is smaller and can be verified directly using the Sagemath code we attach.
These two counterexamples answer a question posed by Soergel in \cite[Remark 6.8]{SoeKazhdana} in the negative (see \Cref{lastremark}).

Most of the results in this paper originally appeared in the author's PhD thesis \cite{PatHodge}. Here, we streamline the arguments therein and adapt the language to more recent developments in the theory of Soergel bimodules. In particular, we use Abe's formulation of the category of Soergel bimodules \cite{AbeBimodule}, which allows us to work without the restrictive assumption that the realization $V$ is reflection faithful.

\section{Background and notation}

Let $(W,S)$ be a Coxeter system and $\Bbbk$ be a field. For $s,t\in S$ let $m_{st}$ denote the order of $st$.

\begin{definition}\label{realdef}
 A \emph{balanced realization}  of $W$ over $\Bbbk$ is a finite-dimensional representation $V$ of $W$ with the additional data of a subset of simple roots $\{ \alpha_s\}_{s\in S}\subset V$ and simple coroots $\{\alpha_s^\vee\}_{s\in S}\subset V^*$ such that for any $s,t\in S$ and $v \in V$ the following conditions hold. 
\begin{itemize} 
	\item $\alpha_s\neq 0$ and $\alpha_s^\vee\neq 0$.
	\item 	$\alpha_s^\vee(\alpha_s)=2$.
	\item $s(v)=v- \alpha_s^\vee(v) \alpha_s$.
	\item (\emph{balancedness}) 
	Let $x=-\alpha_s^\vee(\alpha_t)$ and $y=-\alpha_t^\vee(\alpha_s)$. Then we have $[m_{st}-1]_x=[m_{st}-1]_y=1$, where $[n]_x$ denotes the two-colored quantum number (cf. \cite[\S A.1]{EliTwo}).
\end{itemize}
\end{definition}

Let $T:=\bigcup_{w\in W} wSw^{-1}$ denote the set of reflections in $W$.
The balancedness condition ensures that we can unambiguously  associate a positive root to any reflection $t\in T$.

\begin{lemma}\label{balanced}
Let $r\in T$ be a reflection. Assume that $r=xsx^{-1}=yty^{-1}$ with $x,y\in W$, $s,t\in S$ and $xs>x$ and $yt>y$. Then $x(\alpha_s)=y(\alpha_t)$.
\end{lemma}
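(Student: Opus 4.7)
The lemma says that every reflection $r\in T$ carries a well-defined positive root in $V$, and I would prove it by an induction on $\ell(x)+\ell(y)$ that reduces to a rank-$2$ check in a dihedral subgroup, where the balancedness axiom does the work.

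The base case is $x=y=e$, which forces $s=t=r$ and the claim is immediate. For the inductive step, the idea is to pick a simple reflection $u\in S$, conjugate $r=xsx^{-1}=yty^{-1}$ by $u$ to obtain new factorizations $uru=x's(x')^{-1}=y't(y')^{-1}$, and---provided the length conditions $x's>x'$ and $y't>y'$ are preserved---apply the inductive hypothesis to deduce $x'(\alpha_s)=y'(\alpha_t)$, then left-multiply by $u$ to get $x(\alpha_s)=y(\alpha_t)$. The natural candidates for $x'$ are $ux$ when $ux>x$ and the element defined by $x=ux'$ when $ux<x$, and analogously for $y'$. A careful choice of $u$ (for instance, a common left descent of $x$ and $y$, or, failing that, a descent of the longer factor), combined with a case analysis via the exchange condition, allows this reduction in almost every situation. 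The small number of remaining cases are easy to handle directly; most notably, when $r=u\in S$ one compares both given factorizations with the trivial one $(e,u)$.

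\textbf{Main obstacle.} The genuinely delicate situation is the residual case in which the two factorizations of $r$ differ by a braid move inside a dihedral subgroup $\langle u,s\rangle$ (or $\langle u,t\rangle$). Here the two sides of the braid relation give two a priori different expressions for the positive root attached to $r$, and one must verify that they produce the same vector in $V$. This is a rank-$2$ computation, and it is precisely what the balancedness identity $[m_{us}-1]_x=[m_{us}-1]_y=1$ from \Cref{realdef} is designed to guarantee: unfolding the two-colored quantum number recursion inside $\langle u,s\rangle$ shows that iterating the reflection formula $v\mapsto v-\alpha_u^\vee(v)\alpha_u$ (and its $s$-analogue) around the dihedral cycle produces the same sequence of roots starting from either end. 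This dihedral verification is the main obstacle, and once established it closes the induction.
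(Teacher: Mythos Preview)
Your strategy of reducing to the dihedral case is natural, and you are right that balancedness enters precisely in the rank-$2$ check. However, the inductive machinery you sketch has a genuine gap.

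The induction on $\ell(x)+\ell(y)$ does not terminate as stated. Suppose $u$ is a left descent of $x$ but not of $y$; one checks that $x':=ux$ still satisfies $x's>x'$, but on the $y$-side there are two possibilities. If $uyt>uy$ then $(uy,t)$ is the correct factorization of $uru$, and $\ell(ux)+\ell(uy)=\ell(x)+\ell(y)$: no progress. If instead $uyt<uy$, the good factorization of $uru$ is $(uyt,t)$; the inductive hypothesis would give $x'(\alpha_s)=(uyt)(\alpha_t)$, and applying $u$ yields $x(\alpha_s)=(yt)(\alpha_t)=-y(\alpha_t)$, the wrong sign. You do not explain why this second subcase is impossible (it is, but the reason is positivity of $x(\alpha_s)$ and $y(\alpha_t)$ in the geometric representation, which you never invoke), nor how to force a genuine decrease in the first. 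Your ``easy remaining case'' $r\in S$ is not a base case either: comparing an arbitrary factorization $(y,t)$ of a simple reflection with the trivial one $(e,s)$ is the full strength of the lemma with one side specialized, and nothing in your outline treats it independently.

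The paper avoids these difficulties by a structural route. Setting $w=y^{-1}x$ so that $wsw^{-1}=t$, it uses Allcock's description of the centralizer $Z(t)=\langle t\rangle\times Z'$, where $Z'$ is generated by explicit words built from longest elements of odd dihedral subgroups; the dihedral case (cited from \cite{EliTwo}) shows that each generator of $Z'$ fixes $\alpha_t$. Factoring $w$ through $Z(t)$ and a path in the odd Coxeter graph from $s$ to $t$ gives $w(\alpha_s)=\pm\alpha_t$, and the sign is then pinned down by running the identical computation in the geometric representation, where $x(\alpha_s^G)$ and $y(\alpha_t^G)$ are known to be positive. Both arguments ultimately rest on the rank-$2$ balancedness check, but the paper replaces your induction---which, as written, does not close---with one appeal to the centralizer structure and one comparison with the geometric realization.
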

\begin{proof}
	This is proven for dihedral groups in \cite[\S 3.4]{EliTwo}. We prove it now for a general Coxeter group. Let $w:=y^{-1}x$ so that $wsw^{-1}=t$. We need to show that $w(\alpha_s)=\alpha_t$.
	
	Let $\Gamma$ be the odd Coxeter graph, the graph whose vertices are $S$ and with an edge between $s$ and $t$ if $m_{st}$ is odd. 	Two simple reflections $s$ and $t$ are conjugated to each other in $W$ if and only if they belong to the same connected component in $\Gamma$ (\cite[Lemma 3.6]{BMMWRigidity}).

	Let $\gamma=(s_0,s_1,\ldots,s_d)$ be a path in $\Gamma$, i.e. a sequence of elements $s_0,s_1,\ldots, s_d\in S$ such that $m_{s_is_{i+1}}$ is odd for any $i$. To $\gamma$, we associate the element 
	\[ \pi(\gamma):=\sigma_{d-1}\ldots\sigma_1\sigma_0\]
	where $\sigma_i$ is the longest element in the dihedral subgroup $\langle s_i,s_{i+1}\rangle$.
	
	Let $\gamma_{st}$ be a path in $\Gamma$ from $s$ to $t$.
	It follows that $w\pi(\gamma_{st})^{-1}\in Z(t)$, the centralizer of $t$ in $W$. From the dihedral case we have $\pi(\gamma_{st})(\alpha_s)=\alpha_t$.

	The centralizer $Z(t)$ of a simple reflection in a Coxeter group can be described as $Z(t)=\langle t\rangle\times Z'$, where $Z'$ is generated by elements of the form 
	\[\pi(\gamma_1,\gamma_2,u'):=\pi(\gamma_2)^{-1} (u'u)^{\frac12 m_{uu'}-1}u' \pi(\gamma_1)\] where $u\in S$, $\gamma_1,\gamma_2$ are paths in $\Gamma$ from $t$ to $u$ and $u'$ is such that $m_{uu'}$ is even (see \cite[Theorem 5]{AllReflection}).

The dihedral case implies that $\pi(\gamma_1,\gamma_2,u')$ fixes $\alpha_t$, so the same holds for all the elements in $Z'$.

		Notice that $t\pi(\gamma_{st})=\pi(\gamma_{st})s$. 
	Summarizing, we have $w = z' t^{\epsilon}\pi(\gamma_{st})=z' \pi(\gamma_{st})s^{\epsilon}$, with $z'\in Z'$ and $\epsilon\in \{0,1\}$, and	
	\begin{equation}\label{walphas}
w(\alpha_s)=\begin{cases} \alpha_t&\text{if }\epsilon=0,\\
	-\alpha_t &\text{if }\epsilon=1.\end{cases}
	\end{equation}
	 
	 It remains to show that $\epsilon=0$.
	The equation \eqref{walphas} holds for any balanced realization of $W$. In particular, it holds for the geometric representation. Calling temporarily $\alpha_s^{G}$, $\alpha_t^{G}$ the roots in the geometric representation, since $xs>x$ and $yt>y$ we know that $x(\alpha_s^G)$ and $y(\alpha_t^G)$ are positive roots (\cite[Prop. 4.2.5]{BBCombinatorics}). Hence, $w(\alpha_s^G)=\alpha_t^G$ and $\epsilon=0$.
\end{proof}

For $t\in T$ we can find $x\in W$ and $s\in S$ with $t=xsx^{-1}$ and $xs>x$. We define $\alpha_t:=x(\alpha_s)$. This is well-defined by \Cref{balanced}.

\begin{remark}
	There has been a lot of work over the years to extend the definition of Soergel bimodules to realizations having weaker conditions. In the original definition \cite{SoeKazhdana}, the assumption was that the representation is reflection faithful. In \cite{AbeBimodule}, Abe showed that it is enough to assume that it is reflection faithful on diehdral subgroups. In \cite{AbeHomomorphism} he further weakened this assumption, and replaced it by an identity on two-colored quantum numbers that $\alpha_s^\vee(\alpha_t)$ and $\alpha_t^\vee(\alpha_s)$, with $s,t\in S$, are required to satisfy. This condition is always satisfied if the representation is dihedrally faithful, i.e. if the restriction to any subgroup generated by two simple reflections is faithful (cf. \cite[Prop. 3.6]{AbeHomomorphism}).
\end{remark}

	Here, we cannot work in this full generality since we also require that the category of sheaves in the moment graph is well-behaved. To ensure this, we impose an additional assumption, which is known as the GKM condition:
	
	\begin{equation}\tag{GKM}
		\text{For any $t,t'\in T$ with $t\neq t'$ the roots $\alpha_t$ and $\alpha_{t'}$ are linearly independent.}
	\end{equation}
	
	\begin{definition}
		We say that $V$ is a \emph{GKM-realization} if it satisfies the assumptions in \Cref{realdef} plus the GKM condition.
	\end{definition}
	
	The GKM condition is sufficient to ensure that Abe's assumptions are satisfied.

\begin{lemma}\label{dfaithful}
	Any GKM-realization is dihedrally faithful.
\end{lemma}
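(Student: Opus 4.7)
The plan is to suppose $w \in \langle s, t\rangle$ acts as the identity on $V$ and deduce $w = e$. First, I would use GKM to reduce to the center: if $w$ acts trivially on $V$, then for any reflection $r \in T$, the conjugate $wrw^{-1}$ induces the same endomorphism of $V$ as $r$ does. The action of a reflection $r$ on $V$ pins down $\alpha_r$ up to a nonzero scalar as the image of $1 - r$ (a line, since $\alpha_r^\vee \neq 0$), so $\alpha_{wrw^{-1}}$ is a nonzero scalar multiple of $\alpha_r$. The GKM condition then forces $wrw^{-1} = r$. Applying this to $r = s$ and $r = t$, I conclude that $w \in Z(\langle s, t\rangle)$.

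The remainder of the argument splits on $m_{st}$. If $m_{st}$ is infinite or odd, the center is trivial and we are done. If $m_{st} = 2$, the dihedral group is abelian, so the center is the entire group; here $s, t$ act nontrivially because $\alpha_s^\vee, \alpha_t^\vee \neq 0$, while $st$ can be handled by expanding $st(v)$ directly using the reflection formulas and using that $\alpha_s, \alpha_t$ are linearly independent by GKM. If $m_{st}$ is finite and even with $m_{st} \geq 4$, the center is $\{e, w_0\}$ with $w_0 = (st)^{m_{st}/2}$, and the crux is to exclude $w = w_0$. Because $w_0$ is central, both $x = e$ and $x = w_0 s$ satisfy $xsx^{-1} = s$, and the second choice meets the length condition $xs = w_0 > w_0 s = x$. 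Applying the well-definedness of $\alpha_s$ from \Cref{balanced} to both choices and equating, I obtain $\alpha_s = -w_0(\alpha_s)$, i.e.\ $w_0(\alpha_s) = -\alpha_s$; triviality of $w_0$ would then force $2\alpha_s = 0$, contradicting $\alpha_s \neq 0$ provided $\chara \Bbbk \neq 2$.

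The main obstacle is the characteristic $2$ case for even $m_{st} \geq 4$, where the identity $2\alpha_s = 0$ becomes vacuous. Here I would exploit the balancedness identities $[m_{st}-1]_x = [m_{st}-1]_y = 1$ more carefully: evaluating the two-colored quantum number polynomials in characteristic $2$ yields a very restrictive set of admissible pairs $(\alpha_s^\vee(\alpha_t), \alpha_t^\vee(\alpha_s))$, essentially pinning down the product $\alpha_s^\vee(\alpha_t)\cdot\alpha_t^\vee(\alpha_s)$ to a short list of values. A direct case analysis of these values shows that in each one, two distinct reflections of $T \cap \langle s, t\rangle$ acquire linearly dependent roots, contradicting GKM. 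Hence no GKM realization exists in this residual case, and the lemma holds vacuously there.
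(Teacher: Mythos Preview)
Your opening move---if $w$ acts trivially then $wrw^{-1}$ and $r$ act identically on $V$, hence have proportional roots, hence coincide by GKM---is exactly the mechanism the paper uses. But you apply it only to $r=s,t$, conclude that $w$ is central, and are then forced into a case analysis on $m_{st}$ and $\chara\Bbbk$. The paper instead applies the same mechanism more efficiently: for $w$ of even length it writes $w=su$ with $u$ a reflection in $\langle s,t\rangle$; since $w$ acts trivially, $s$ and $u$ act identically, so $\alpha_u\in\Bbbk\alpha_s$, and GKM gives $u=s$, i.e.\ $w=e$. For $w$ of odd length, $w$ is itself a reflection conjugate to $s$ or $t$, which cannot act trivially since $\alpha_s^\vee,\alpha_t^\vee\neq 0$. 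This argument is uniform in $m_{st}$ and in the characteristic.

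Your proof has a genuine gap in the residual case $\chara\Bbbk=2$, $m_{st}$ even $\geq 4$. There you assert that a case analysis of the balancedness constraints would show no GKM realization exists, but you do not carry out that analysis, and it is not at all clear it succeeds. More to the point, the claim is not needed: the paper's argument shows dihedral faithfulness directly in this case too, so even if such realizations exist the lemma holds non-vacuously. The fix is simply to redeploy your own key observation differently: instead of conjugating $s$ and $t$ by $w$, write $w=su$ and compare $s$ with $u$.
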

\begin{proof}
	
	Let $V$ be a GKM-realization of $W$.
	Let $s,t\in S$ and let $W'$ be the subgroup generated by $s$ and $t$. 
	Assume that there is $w\in W'$ acting trivially on $V$.
	
	 If $\ell(w)$ is odd, then
	 then $w$ must be conjugate within $W'$ to either s or t. Since $w$ acts trivially, its conjugate ($s$ or $t$) must also act trivially on $V$.
	  But this contradicts the assumption that $\alpha_s^\vee,\alpha_t^\vee\neq 0$.

	Assume that $\ell(w)$ is even. Then $w$ is the product of a simple reflection and of a reflection in $W'$, say $w=su$ with $u\in T$. So $u$ acts on $V$ as $s$. Since $\alpha_s^\vee\neq 0$, we can find $v \in V$ such that $s(v)-v \in \Bbbk^* \alpha_s$. From this, it follows that $\alpha_u$ and $\alpha_s$ are linearly dependent. The GKM condition now forces that $u=s$ and $w=e$.
\end{proof}

Let \( R:= \Sym(V) \) be the ring of regular functions on \( V^* \).
We regard \( R \) as a graded ring by setting \( \deg(V) = 2 \). 
We denote by \( R_+ \) the ideal of \( R \) generated by homogeneous polynomials of positive degree. 
We view \( \Bbbk \) as an \( R \)-module via \( \Bbbk \cong R / R_+ \). 

The action of \( W \) on \( V \) extends to an action on \( R \). 
For a reflection \( t \in T \) , we denote by \( \partial_t : R \to R \) the corresponding \emph{Demazure operator}, defined by
\[
\partial_t(f) = \frac{f - t(f)}{\alpha_t}.
\]

\section{The structure algebra of a Coxeter group}

\subsection{The nil Hecke ring and its dual}\label{nilhecke}

The nil Hecke ring was defined 
 by Kostant and Kumar in \cite{KKNil} and Arabia in \cite{AraCohomologie}.
It serves as an algebraic construction of the equivariant cohomology of the flag variety of a reductive group.
It is important to remark that its construction can be generalized to arbitrary Coxeter groups (as pointed out in \cite[Remark 4.35(b)]{KKNil})
In fact, Kostant and Kumar's original motivation was to provide an algebraic description of the (equivariant) cohomology of flag varieties of Kac--Moody groups.
We follow here the treatment in \cite{RZNil}.

Let $V$ be a GKM-realization of $W$ and let $R=\Sym(V)$. 
Let $Q=R[\frac{1}{\alpha_t}\mid t \in T]$\label{s:Q} be the localization of $R$ at the set of roots. Let $Q_W$ denote the smash product of $Q$ with $W$. This means that $Q_W$ is a free left $Q$-module with basis $\{\delta_w\}_{w\in W}$ and multiplication defined by
$$(f\delta_x)(g\delta_y)=fx(g)\delta_{xy}.$$
In particular, $f\delta_x=\delta_x x^{-1}(f)$. 
There is an action of $W$ on $Q_W$ defined by conjugating by $\delta_w$:
\[ w(f\delta_x):= \delta_w f\delta_x \delta_{w^{-1}}= w(f)\delta_{wxw^{-1}}.\]

For $s\in S$ we define the element $$D_s=\frac{1}{\alpha_s}(\delta_{\eid}-\delta_s)=(\delta_\eid+\delta_s)\frac{1}{\alpha_s}\in Q_W.$$
We have $D_s^2=0$ and the $D_s$ satisfy the braid relations \cite[Proposition 4.2]{KKNil}\footnote{The argument in \cite{KKNil} assumes the representation $W$ to be faithful, but to show the braid relations it is enough to work within the dihedral subgroup $\langle s,t\rangle$, for which faithfulness holds by \Cref{dfaithful}}, i.e.
$$\underbrace{D_sD_tD_s\ldots}_{m_{st} \text{times}}=\underbrace{D_tD_sD_t\ldots}_{m_{st} \text{times}}$$

Hence, for $x\in W$ we can define $D_x=D_{s_1}D_{s_2}\ldots D_{s_l}$ where $\undx=s_1s_2\ldots s_{l}$ is any reduced expression for $x$. We have a natural left action of $Q_W$ on $Q$ via $f\delta_x\cdot g=fx(g)$.\label{s:D_x}

We call $R$-ring a ring which is also a module over $R$.
\begin{definition}
	The \emph{nil-Hecke ring} $\frN$ is the $R$-subring of $Q_W$ generated by $\{D_s\}_{s\in S}$.\footnote{By $R$-subring generated by $\{D_s\}$ we mean the smallest subring of $Q_W$ containing $D_s$ and which is closed under left multiplication by $R$. Notice that this is not an $R$-algebra because the action of $R$ is not central.}
\end{definition}

Observe that  $\delta_s=-\alpha_s D_s+\delta_{id} \in \frN$, so $\delta_w\in \frN$ for any $w\in W$. Moreover, $\frN$ is stable under the action of $W$.


\begin{theorem}[{\cite[Proposition 3.11]{RZNil}}]
	The ring $\frN$ is a free right $R$-module with basis $\{D_w\}_{w\in W}$.
\end{theorem}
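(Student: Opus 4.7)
The plan is to prove the two assertions—linear independence and spanning—separately, working inside $Q_W$ with its $Q$-basis $\{\delta_y\}_{y\in W}$.

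For linear independence, I would first establish by induction on $\ell(w)$ a triangular expansion
\[
D_w \;=\; \sum_{y\leq w} d_w^{y}\,\delta_y \;\in\; Q_W,\qquad d_w^{w}\in Q^\times.
\]
The base case is $D_\eid=\delta_\eid$. For the inductive step, fix a reduced word $\undw=s\cdot\undw'$ with $w=sw'$ and $sw'>w'$, so that $D_w=D_sD_{w'}$. Using $D_s=\alpha_s^{-1}(\delta_\eid-\delta_s)$ together with $\delta_s\delta_y=\delta_{sy}$ and $\delta_s f = s(f)\delta_s$, a direct computation gives
\[
d_w^{z}\;=\;\tfrac{1}{\alpha_s}\!\left(d_{w'}^{z}-s(d_{w'}^{sz})\right).
\]
The Bruhat lifting property (for $sw'>w'$, $z\leq sw'$ iff $z\leq w'$ or $sz\leq w'$) forces $d_w^z=0$ whenever $z\not\leq w$, while at $z=w$ both $d_{w'}^{w}$ and, inductively, the first summand vanish, leaving $d_w^w=-\alpha_s^{-1}s(d_{w'}^{w'})\in Q^\times$. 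To upgrade $Q$-linear independence (from the left) to right $R$-linear independence of $\{D_w\}$, assume $\sum_w D_w f_w=0$ with $f_w\in R$, and use $\delta_y f = y(f)\delta_y$ to rewrite this as $\sum_y\bigl(\sum_w d_w^{y}\, y(f_w)\bigr)\delta_y=0$. Picking $w_0$ maximal in $\{w: f_w\ne 0\}$ and reading off the $\delta_{w_0}$-coefficient yields $d_{w_0}^{w_0}\,w_0(f_{w_0})=0$, impossible since $Q$ is a domain and the $W$-action on $R$ preserves nonzero elements.

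For the spanning, let $M\subseteq Q_W$ be the right $R$-submodule generated by $\{D_w\}_{w\in W}$. Obviously $M\subseteq\frN$; for the reverse inclusion I would show that $M$ is closed under left multiplication by each of the generators $R\cup\{D_s:s\in S\}$ of $\frN$. Stability under left multiplication by $D_s$ is the statement $D_sD_w\in\{D_{sw},0\}$, which follows from the braid relations and $D_s^2=0$ (to pass from the product $D_sD_w$ to the statement, write $w=s_1\cdots s_\ell$ reduced and absorb $D_s$ into a reduced word for $sw$, or reduce to $D_s^2=0$ when $sw<w$). Stability under left multiplication by $f\in R$ rests on the single commutation identity
\[
f\,D_s \;=\; D_s\,s(f)+\partial_s(f),
\]
bootstrapped by induction on $\ell(w)$: writing $D_w=D_s D_{w'}$, we compute $fD_w=D_s\,s(f)D_{w'}+\partial_s(f)D_{w'}$, both summands landing in $M$ by the inductive hypothesis applied to $s(f)D_{w'}$ and $\partial_s(f)D_{w'}$ together with $D_s$-stability.

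The main technical obstacle is precisely the triangular expansion: one must simultaneously control the Bruhat support of $D_w$ and the non-vanishing of the leading coefficient $d_w^w$. The Bruhat lifting property handles the former and the recursion $d_w^w=-\alpha_s^{-1}s(d_{w'}^{w'})$ handles the latter; once this expansion is in hand, both the passage from $Q$-linear independence to right-$R$-linear independence and the closure argument for spanning are formal. The only ``global'' hypothesis really used on the realization is that $W$ acts faithfully enough on $R$ to preserve nonzero elements, which is guaranteed by the GKM condition via \Cref{dfaithful}.
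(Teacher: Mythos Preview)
The paper does not prove this theorem; it is quoted from \cite[Proposition~3.11]{RZNil}. So there is no ``paper's own proof'' to compare against. That said, your argument is correct and self-contained, and in fact the triangular expansion you establish is exactly the formula the paper later writes down (without proof) as \eqref{Dwdeltaw} in the proof of \Cref{dxy}, with the explicit leading coefficient $d_w^{w}=(-1)^{\ell(w)}/p_w$; your recursion $d_w^w=-\alpha_s^{-1}s(d_{w'}^{w'})$ reproduces this value. The spanning half of your argument is likewise the content behind the paper's remark, immediately after the theorem, that $\frN$ admits the presentation with relations $D_s^2=0$, the braid relations, and $D_s\lambda=s(\lambda)D_s+\alpha_s^\vee(\lambda)$.

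One small correction: your closing sentence overstates what is needed. The step ``$w_0(f_{w_0})=0\Rightarrow f_{w_0}=0$'' does not use faithfulness of the $W$-action or the GKM condition at all; each $w\in W$ acts on $R=\Sym(V)$ as a graded ring automorphism, and automorphisms preserve nonzero elements. So \Cref{dfaithful} is irrelevant here, and the linear-independence argument goes through for any balanced realization.
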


We can also describe the ring $\frN$ by generators and relations (cf. \cite[Definition 3.3]{RZNil}). The nil-Hecke ring $\frN$ is the $R$-ring with generators $\{D_s\}_{s\in S}$ and relations
\[ D_s^2=0\qquad \overbrace{D_sD_t\cdots}^{m_{st}} =\overbrace{D_t D_s \cdots}^{m_{st}}\]
\[ D_s \lambda= s(\lambda) D_s+\alpha_s^\vee(\lambda).\]

In general, the product of an element $D_w$ with $\lambda\in V$ can be computed as follows.
\begin{equation}\label{pieri} D_w \cdot p = w(\lambda)D_w + \sum_{v\xto{t} w}\alpha_t^\vee(\lambda)D_v\end{equation}
where $t\in T$ and $v\xto{t} w$ means $w=vt$ and $\ell(w)=\ell(v)+1$.

Let $Q_W^*=\Hom_{Q-}(Q_W,Q)$ be the set of left $Q$-module morphisms. We can think of $Q_W^*$ as the set of functions $W\raw Q$, where to an element $\psi\in Q_W^*$ corresponds the function $W\raw Q$ which sends $x\in W$ to $\psi(\delta_x)$. We regard $Q_W^*$ as a $Q$-ring, via point-wise addition, scalar multiplication and multiplication, that is if $f,g \in Q_W^*$ then $f\cdot g(\delta_w)=f(\delta_w)g(\delta_w)$. 

Notice that $\{D_w\}_{w\in W}$ is also a basis of $Q_W$ as a left $Q$-module. 	Let $\{\xi^w\}_{w\in W}\subset Q_W^*$ be the dual basis of $\{(-1)^{\ell(w)}D_w\}$. That is, $\xi^w$ is defined by 
\[ \xi^w(D_x)=(-1)^{\ell(x)}\delta_{x,w}.\]



	\begin{definition}
	The \emph{dual nil-Hecke ring} $\frN^*$ is the 
	subalgebra of $Q_W^*$ generated by $\xi^w$.
	\end{definition}
	
	We have that $\xi^u\cdot \xi^v=\sum p_{u,v}^w \xi^w$ for some $p_{u,v}^w\in R$ by \cite[Theorem 4.2]{RZNil}. It follows that $\{\xi^w\}_{w\in W}$ is a basis of $\frN^*$. 
	
	Let $d_{x,y}:=\xi^x(\delta_y)\in R$. For $w\in W$ let 
	\begin{equation}\label{pw}p_w=\prod_{\stackrel{t\in T}{tw<w}}\alpha_t.\end{equation}

		\begin{lemma}\label{dxy}
				For $x\in W$ we have
				\begin{enumerate}\item $d_{x,y}=0$ unless $x\leq y$. 
					\item $d_{x,x}=p_x$ and $\deg(d_{x,y})=2\ell(x)$.
					\item
		For any reflection $t$ and any $x,y\in W$ we have $d_{x,y}\equiv d_{x,ty} \pmod{\alpha_t}$.
	\end{enumerate}
	\end{lemma}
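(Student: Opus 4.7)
The plan is to reduce all three claims to statements about the coefficients of $\delta_y$ in the left $Q$-basis $\{D_w\}_{w\in W}$ of $Q_W$. Writing $\delta_y = \sum_{w} a^y_w D_w$ with $a^y_w \in Q$, the definition of $\xi^x$ gives $d_{x,y} = (-1)^{\ell(x)} a^y_x$. Thus the three assertions translate, respectively, into: $a^y_w = 0$ unless $w \le y$; $a^y_y = (-1)^{\ell(y)} p_y$ with $a^y_w$ homogeneous of degree $2\ell(w)$; and $a^y_w \equiv a^{ty}_w \pmod{\alpha_t}$ for every $t \in T$.

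For parts (1) and (2) I induct on $\ell(y)$. Factoring $y = sy'$ with $\ell(y) = \ell(y')+1$ and using $\delta_s = \delta_\eid - \alpha_s D_s$, I expand $\delta_y = \delta_s \delta_{y'}$ via the Demazure commutation $D_s f = s(f)D_s + \partial_s(f)$ together with $D_s D_w = D_{sw}$ when $sw > w$ and $D_s D_w = 0$ otherwise. A short computation then yields $a^y_w = s(a^{y'}_w) - \alpha_s\, s(a^{y'}_{sw})$ when $sw < w$, and $a^y_w = s(a^{y'}_w)$ otherwise. Triangularity follows from the lifting property of Bruhat order ($w \le sy'$ iff $w \le y'$ or $sw \le y'$); the degree claim follows from $\deg(\alpha_s)=2$. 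The diagonal case collapses to $a^y_y = -\alpha_s\, s(a^{y'}_{y'})$ (since $a^{y'}_y = 0$ and $sy = y'$), and combining this with the identity $p_y = \alpha_s \cdot s(p_{y'})$ --- a direct consequence of the decomposition of the left inversions of $sy'$ as $\{s\} \sqcup s\{t \in T : ty' < y'\}s$ together with $\alpha_{sts}=s(\alpha_t)$ --- closes the induction.

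For (3) the key trick is to introduce, for each $t \in T$, the element $\widehat{D}_t := \frac{1}{\alpha_t}(\delta_\eid - \delta_t) \in Q_W$, so that $\delta_y - \delta_{ty} = (\delta_\eid - \delta_t)\delta_y = \alpha_t \widehat{D}_t \delta_y$. I then claim $\widehat{D}_t \in \frN$: writing $t = xsx^{-1}$ with $s \in S$ and applying the conjugation $W$-action on $Q_W$, a short calculation gives $\widehat{D}_t = x(D_s)$, which lies in $\frN$ since $\frN$ is $W$-stable. Consequently $\widehat{D}_t \delta_y \in \frN$, its left $Q$-expansion has coefficients in $R$, and applying $\xi^x$ yields $d_{x,y} - d_{x,ty} = \alpha_t \cdot \xi^x(\widehat{D}_t \delta_y) \in \alpha_t R$.

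The main delicate point throughout is verifying that the coefficients we extract really lie in $R$ rather than merely in $Q$. In the induction of (2) this is automatic because the recursion only involves the $R$-preserving operations $s(-)$, $\partial_s(-)$ and multiplication by $\alpha_s$; in (3) it is ensured precisely by the $W$-stability of $\frN$.
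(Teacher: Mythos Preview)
Your proof is correct. For part~(3) you do exactly what the paper does: write $\delta_y-\delta_{ty}=\alpha_t\,\widehat{D}_t\,\delta_y$ with $\widehat{D}_t=w(D_s)\in\frN$, and conclude because $\xi^x(\frN)\subset R$.

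For parts~(1) and~(2), however, you and the paper go in opposite directions. The paper expands $D_w$ in the $\delta$-basis via the product formula
\[
D_w=\frac{(-1)^{\ell(w)}}{p_w}\delta_w+\sum_{u<w}c_{w,u}\delta_u,
\]
reads off $\bigoplus_{w\le y}Q\cdot D_w=\bigoplus_{w\le y}Q\cdot\delta_w$, and deduces the vanishing and the diagonal value in one stroke. You instead expand $\delta_y$ in the $D$-basis and run an induction on $\ell(y)$, deriving the recursion $a^y_w=s(a^{y'}_w)-\alpha_s\,s(a^{y'}_{sw})$ (resp.\ $a^y_w=s(a^{y'}_w)$) and invoking the lifting property for Bruhat order. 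The paper's argument is shorter and avoids any appeal to the lifting property; your approach, on the other hand, gives an explicit polynomial recursion for the $d_{x,y}$ and makes it transparent from the outset that they lie in $R$ (and are homogeneous of the right degree), which in the paper's approach is established only a posteriori. Both are perfectly valid.
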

	\begin{proof}
		If $w=s_{i_1}\ldots s_{i_r}$ then $p_w=\alpha_{i_1}\cdot s_{i_1}(\alpha_{i_2}) \cdots s_{i_1} s_{i_2} \cdots s_{i_{r-1}} (\alpha_{i_r})$ and 
		\begin{equation}\label{Dwdeltaw}
		D_w = \frac{1}{\alpha_{i_1}} (\delta_e - \delta_{s_{i_1}}) \cdots \frac{1}{\alpha_{i_r}} (\delta_e - \delta_{s_{i_r}})= \frac{(-1)^{\ell(w)}}{p_w} \delta_w +
		\sum_{u<w} c_{w,u} \delta_u,
		\end{equation}
		with $c_{w,u}\in Q$ homogeneous of degree $-2\ell(w)$.
		From this we deduce \begin{equation}\label{D=d}\bigoplus_{w\leq y} Q \cdot D_w=\bigoplus_{w\leq y} Q \cdot \delta_w\end{equation} for any $y \in W$. If $x\not \leq y$, then $\xi^x$ vanishes on \eqref{D=d} and $(1)$ follows.
		Moreover, we have by \eqref{Dwdeltaw}  that $\xi^x(D_x)=\frac{(-1)^{\ell(x)}}{p_x}d_{x,x}$, from which $d_{x,x}=p_x$. Similarly, by induction on $\ell(w)$, we see that $\xi^x(\delta_w)$ is homogeneous of degree $2\ell(x)$ and $(2)$ follows.		
		
		We now prove $(3)$. Given a reflection $t\in T$,
		we can choose $w\in W$ and $s\in S$ such that $wsw^{-1}=t$ and $w(\alpha_s)=\alpha_t$. We have
		\[ w(D_s)=w(\frac{1}{\alpha_s}(1-\delta_s))=\frac{1}{\alpha_t}(1-\delta_t)=:D_t\]	
		
		and \[d_{x,y}-d_{x,ty}=\xi^x(\delta_{y}-\delta_{ty})=\xi^x(\alpha_t D_t \delta_y)=\alpha_t \xi^x(D_t\delta_y).\]
		We conclude since $D_t\delta_y\in \frN$.
	\end{proof}
	
	There is also an action of $W$ on $\frN^*$ defined by $
	x\cdot \psi(Y)=\psi(Y\delta_x)$ for any $Y\in Q_W$. 

\begin{lemma}\label{lemmasxi}
	For $s \in S$ we have
	\[ \displaystyle s\cdot \xi^w=\begin{cases}
		\xi^w &\text{if }ws>w\\
\xi^w - w(\alpha_s)\xi^{ws}-\sum_{ws\xto{t} v} \alpha_t^\vee(\alpha_s)\xi^v&\text{if }ws<w	\end{cases}\]
\end{lemma}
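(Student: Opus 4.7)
The plan is to verify the identity by evaluating both sides on the $Q$-basis $\{D_y\}_{y\in W}$ of $Q_W$. By the definition of the $W$-action on $Q_W^*$, this reduces to computing $\xi^w(D_y\delta_s)$ for every $y\in W$. First I would rewrite $\delta_s=\delta_{\eid}-\alpha_s D_s$, which follows immediately from the definition of $D_s$, so that
\[ D_y\delta_s=D_y-D_y\,\alpha_s\, D_s. \]
I would then apply the Pieri-type rule \eqref{pieri} with $\lambda=\alpha_s$ to move $\alpha_s$ across $D_y$, and use the standard identity $D_zD_s=D_{zs}$ if $zs>z$ and $D_zD_s=0$ otherwise, so that $D_y\alpha_s D_s$ becomes an explicit $R$-linear combination of basis elements of the form $D_{ys}$ and $D_{vs}$.

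Next I would apply $\xi^w$ and use $\xi^w(D_x)=(-1)^{\ell(x)}\delta_{x,w}$, so that only summands equal to $D_w$ survive. This forces $ys=w$ or $vs=w$, hence $y=ws$ or $v=ws$, and in particular $ws<w$. Therefore, when $ws>w$ no correction arises and $s\cdot\xi^w=\xi^w$, settling the first case.

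In the case $ws<w$, I would expand $s\cdot\xi^w=\sum_u c_u\xi^u$ and read off the coefficients via $c_u=(-1)^{\ell(u)}(s\cdot\xi^w)(D_u)$. The argument above already shows $c_u=0$ unless $u\in\{w,\,ws\}\cup\{v : ws\xto{t} v\}$; in each of these three remaining situations a short direct computation, using $\alpha_s^\vee(\alpha_s)=2$ and $ws(\alpha_s)=-w(\alpha_s)$, produces exactly the coefficient predicted by the formula.

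The main bookkeeping obstacle I anticipate is tracking the contribution of $t=s$ inside the sum $\sum_{v\xto{t} y}$. This single contribution is responsible both for the shift turning the coefficient of $\xi^w$ from $1$ into $-1$ (via $\alpha_s^\vee(\alpha_s)=2$) and for the appearance of the $v=w$ term inside the RHS sum $\sum_{ws\xto{t}v}$ of the lemma. Once these two manifestations of $t=s$ are accounted for consistently, the remaining signs and coefficients fall out from the elementary identity $ws(\alpha_s)=-w(\alpha_s)$.
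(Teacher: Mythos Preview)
Your proposal is correct and follows essentially the same approach as the paper: both rewrite $\delta_s=\delta_{\eid}-\alpha_sD_s$, apply the Pieri rule \eqref{pieri} to move $\alpha_s$ past $D_y$, use $D_zD_s=D_{zs}$ or $0$, and then evaluate against $\xi^w$ via $\xi^w(D_x)=(-1)^{\ell(x)}\delta_{x,w}$. Your remark about the $t=s$ term accounting simultaneously for the $1-\alpha_s^\vee(\alpha_s)=-1$ coefficient and for the $v=w$ summand in $\sum_{ws\xto{t}v}$ is exactly the bookkeeping the paper leaves implicit in its ``from which we get'' step.
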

\begin{proof}
	We have $\delta_s=-\alpha_s D_s+\delta_{id} $, and
	\begin{align*} D_v\delta_s &=-D_v\alpha_s D_s+D_v \\
		&=-v(\alpha_s)D_v D_s-\sum_{u\xto{t}v}\alpha_t^{\vee}(\alpha_s)D_uD_s+D_v.
	\end{align*}	
We consider first the case $ws>w$. Then $s\cdot \xi^w(D_v)=\xi^w(D_v\delta_s)=(-1)^{\ell(w)}\delta_{v,w}$ because $D_w$ cannot be expressed in the form $D_uD_s$, so $s\cdot\xi^w=\xi^w$.	
	
Assume now $ws<w$. We have
\[(-1)^{\ell(w)}\xi^w(D_v\delta_s)=\begin{cases}
	1 -\alpha_s^\vee(\alpha_s)&\text{if }v=w\\
	-v(\alpha_s)& \text{if }v=ws\\
	-\alpha_t^\vee(\alpha_s)& \text{if }vts=w \text{ with }\ell(v)=\ell(w)\\
		0 &\text{otherwise}\\
	\end{cases}\]
from which we get
\[ s\cdot \xi^w=\xi^w-w(\alpha_s)\xi^{ws}-\sum_{ws\xto{t} v} \alpha_t^\vee(\alpha_s)\xi^v.\qedhere\]
\end{proof}

\begin{cor}\label{corsxi}
	The subspace of $s$-invariants $(\frN^*)^s$ is a free $R$-module with basis $\{\xi^w\}_{ws>w}$.
\end{cor}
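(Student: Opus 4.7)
The plan is short: two of the three required facts are immediate from \Cref{lemmasxi}, and the third reduces to a one-variable induction on Bruhat length. First, $s\cdot \xi^w = \xi^w$ whenever $ws > w$ by \Cref{lemmasxi}, so $\{\xi^w\}_{ws > w}\subseteq (\frN^*)^s$; and these elements are $R$-linearly independent since they form a subset of the free $R$-basis $\{\xi^w\}_{w\in W}$ of $\frN^*$. The content lies in showing that these elements also span $(\frN^*)^s$ as an $R$-module.

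Given $\psi = \sum_{w\in W} a_w \xi^w \in (\frN^*)^s$ with $a_w\in R$, I would rewrite the condition $\psi = s\cdot\psi$ with \Cref{lemmasxi} as
\[
\sum_{w:\, ws < w} a_w\Bigl(w(\alpha_s)\,\xi^{ws} + \sum_{ws \xto{t} v} \alpha_t^\vee(\alpha_s)\,\xi^v\Bigr) = 0,
\]
an identity in the free $R$-module $\frN^*$, so the $\xi^z$-coefficient must vanish for every $z\in W$. I would then induct strongly on $\ell(w)$ for $w$ in the ``bad'' set $\{w : ws < w\}$: given such $w$, the coefficient of $\xi^{ws}$ receives one diagonal contribution $a_w\cdot w(\alpha_s)$ (from $w_1 = w$ in the outer sum) and off-diagonal contributions $a_{w_1}\alpha_t^\vee(\alpha_s)$ from $w_1$ in the bad set with $w_1 s \xto{t} ws$. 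The length identities $\ell(w_1 s) = \ell(w_1) - 1$ and $\ell(ws) = \ell(w_1 s) + 1$ combine to give $\ell(w_1) = \ell(ws) = \ell(w) - 1$, so every such $w_1$ is strictly shorter than $w$; by the inductive hypothesis the off-diagonal terms vanish, and the surviving equation $a_w \cdot w(\alpha_s) = 0$ forces $a_w = 0$, since $w(\alpha_s)$ is a nonzero element of the integral domain $R$ (as $W$ acts on $V$ by linear isomorphisms).

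The base case $\ell(w) = 1$ is $w = s$, where the coefficient of $\xi^e$ has no off-diagonal contributors (these would require a bad element of length $0$, but $e$ is not in the bad set), so the equation is simply $-a_s\,\alpha_s = 0$, giving $a_s = 0$. The only piece of care required is the length bookkeeping above, which rules out any contribution from $w_1$ of length $\geq \ell(w)$ other than $w_1 = w$; beyond that the argument is mechanical and uses nothing beyond \Cref{lemmasxi}.
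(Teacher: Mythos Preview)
Your proof is correct and follows essentially the same approach as the paper. Both arguments hinge on examining the coefficient of $\xi^{ws}$ in the equation $\psi - s\cdot\psi = 0$ and using the length bookkeeping $\ell(w_1) = \ell(ws) = \ell(w)-1$ to kill the off-diagonal terms; the paper phrases this as a minimal-counterexample argument (take $y$ of minimal length with $p_y\neq 0$) while you phrase it as strong induction on $\ell(w)$, but these are the same idea.
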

\begin{proof}
	We know from \Cref{lemmasxi} that $\xi^w\in (\frN^*)^s$ if $ws>w$. Assume now that there exists $0\neq \sum_{ws<w} p_w \xi^w\in (\frN^*)^s$. Let $y$ be of minimal length in the sum with $p_y\neq 0$. Then in $s\cdot \left(\sum_{ws<w} p_w \xi^w\right)$ the coefficient of $\xi^{ys}$ is $-p_y \cdot y(\alpha_s)$, from which we obtain $p_y=0$, which is a contradiction.
\end{proof}

	\subsection{Moment graphs of Coxeter groups}\label{moment}

There exists another description of the equivariant cohomology of the flag variety, obtained by Goresky, Kottwitz and MacPherson \cite{GKMEquivariant} using the localization theorem for torus actions. 
As pointed out by Fiebig \cite{FieCombinatorics}, one can generalize this construction to an arbitrary Coxeter group. We show that for an arbitrary Coxeter group and for a realization $V$ satisfying our assumptions this construction still returns the dual nil-Hecke ring.
%
%
%
%

%
%

The \emph{unbounded structure algebra} $\hat{Z}$ is defined by

$$\hat{Z}=\left\{(r_v)\in \prod_{v\in W}R\mid r_v\equiv r_{tv}\pmod{\alpha_t}\;\forall v\in W, t \in T\right\}.$$\label{s:hatZ}

For $i\in \bbN$ let $\hat{Z}_i$ be the graded component of $\hat{Z}$, that is $\hat{Z}_i:=\{(z_v)\in \hat{Z} \mid \deg{z_v}=i\}$. 
We define $Z:=\bigoplus_{i\in \bbZ} \hat{Z}_i$. Then $Z$ is a subring of $\hat{Z}$. We call $Z$ the \emph{(bounded) structure algebra}. We can also describe $Z$ as the subring of sections in $\hat{Z}$ with bounded degree, that is $Z=\{(z_v)\in \hat{Z} \mid \exists i:\deg{z_v}\leq i$ for all $v\in W\}$.\label{s:Z}
Notice that for an infinite Coxeter group we have $Z\neq \hat{Z}$. 

By \Cref{dxy}, we can define an element of the structure algebra $\calP_x\in Z$ by setting $(\calP_x)_y:=d_{x,y}$.
The element $\calP_x$ is homogeneous of degree $2\ell(x)$.

\begin{lemma}
	The set $\{\calP_x\}_{x\in W}$ is a basis of $Z$ as an $R$-module.
\end{lemma}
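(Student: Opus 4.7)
The plan is to establish linear independence and spanning separately, both relying on the triangular structure of $\{\mathcal{P}_x\}$ with respect to the Bruhat order (\Cref{dxy}) combined with the GKM divisibility condition.

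For linear independence, suppose $\sum_x f_x \mathcal{P}_x = 0$ with $f_x \in R$ and only finitely many nonzero. Let $y$ be a Bruhat-minimal element of $\{x : f_x \neq 0\}$. Evaluating the sum at coordinate $y$ and using $d_{x,y}=0$ unless $x \leq y$, only the term $x=y$ survives, giving $f_y \cdot p_y = 0$. Since $p_y$ is a nonzero product of roots and $R$ is a domain, $f_y=0$, a contradiction.

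For spanning, since $Z = \bigoplus_i \hat{Z}_i$, it suffices to write a homogeneous element $z\in \hat{Z}_{2d}$ as a finite $R$-combination of the $\mathcal{P}_x$ (note all degrees in $R$ and $\mathcal{P}_x$ are even). I construct the coefficients $f_x$ for $\ell(x)\leq d$ by induction on $j = \ell(x)$. Having defined $f_x$ for $\ell(x)<j$, set $z^{(j)} := z - \sum_{\ell(x)<j} f_x\mathcal{P}_x$. By the triangular vanishing of the $\mathcal{P}_x$, one checks that $z^{(j)}_u = 0$ for every $u$ with $\ell(u)<j$. Now fix $v$ with $\ell(v)=j$. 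For each reflection $t$ with $tv<v$, the GKM congruence gives $z^{(j)}_v \equiv z^{(j)}_{tv} \pmod{\alpha_t}$; since $\ell(tv)<j$, the right-hand side vanishes, so $\alpha_t \mid z^{(j)}_v$ in $R$. The GKM hypothesis guarantees that the roots $\{\alpha_t\}_{tv<v}$ are pairwise linearly independent, hence pairwise non-associate in the UFD $R$, so their product $p_v$ divides $z^{(j)}_v$. I then define $f_v := z^{(j)}_v / p_v \in R$.

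It remains to show $z^{(d+1)} = 0$ (that is, no $\mathcal{P}_x$ with $\ell(x)>d$ are needed). For $\ell(v)=d+1$, the same GKM divisibility argument yields $p_v \mid z^{(d+1)}_v$; but $\deg p_v = 2(d+1) > 2d = \deg z^{(d+1)}_v$, forcing $z^{(d+1)}_v=0$. Inducting on $\ell(v)$, the vanishing propagates to all of $W$. Combining with existence of the $f_v$ for $\ell(v)\leq d$, we obtain $z = \sum_{\ell(x)\leq d} f_x \mathcal{P}_x$, a finite sum, and together with linear independence this completes the proof. The main obstacle is the divisibility step: extracting divisibility by the product $p_v$ from divisibility by each individual $\alpha_t$, which is exactly what the pairwise-coprimality supplied by the GKM condition enables—without it the argument breaks down.
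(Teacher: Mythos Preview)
Your proof is correct and follows essentially the same approach as the paper: triangularity of $(\mathcal{P}_x)_y$ with respect to Bruhat order for linear independence, and for spanning an inductive subtraction argument using the GKM divisibility $p_v \mid z_v$ (once lower coordinates vanish) together with the degree bound to terminate. The only cosmetic difference is that the paper processes elements one at a time along a total refinement of Bruhat order while you process all elements of a given length simultaneously, and you are more explicit about why pairwise linear independence of roots in the UFD $R$ upgrades divisibility by each $\alpha_t$ to divisibility by their product $p_v$.
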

\begin{proof}
	By a triangularity argument, the set $\{\calP_x\}_{x\in W}\cug Z$ is linearly independent over $R$ since $(\calP_x)_y=0$ for $y\not \geq x$ and $(\calP_x)_x=p_x\neq 0$ by \Cref{dxy}.
	
	Let $Z'=\spa_R\langle \calP_x\mid x\in W\rangle$. Let $f\in Z$ be homogeneous of degree $2d$. We need to show that $f\in Z'$. We fix an enumeration $w_1,w_2,w_3\ldots$ of the elements of $W$ which refines the Bruhat order.
	Let $h$ be minimal such that $f_{w_h}\neq 0$. 
	Then $p_{w_h}| f_{w_h}$ because for any $t\in T$ with $tw_h<w_h$ we have $\alpha_t \mid f_{w_h}$. So we can replace $f$ with $$f':=f-\frac{f_{w_h}}{p_{w_h}}\calP_v\in Z$$
	Notice that $f'$ is homogeneous of degree $2d$ and $f'_{w_i}=0$ for all $i\leq h$. 
	If we repeat this enough times we end up with $g\in Z'$ of degree $2d$ such that $g_x=0$ for all $x\in A_d$, where $A_d:=\{x\in W\mid \ell(x)\leq d\}$.
	
	Assume now $g\neq 0$, so there exists a minimal element $w\in W$ such that $\ell(w)>d$ and $g_w\neq 0$. But this would imply $p_w|g_w$, which is impossible since $\deg p_w>2d$.
\end{proof}

There is a natural $W$-action on $Z$. For $z\in \prod_{v\in W}R$ and $x\in W$ we define $(x\cdot z)_v=z_{vx}$. This action preserves $Z$: in fact $\alpha_t$ divides $(x\cdot z)_{tv}-(x\cdot z)_v=z_{tvx}-z_{vx}$ for all $t\in T$, $x,v,w\in W$.

\begin{theorem}\label{lambda=z}
	There exists a $W$-equivariant isomorphism of graded $R$-rings $\Phi:\frN^*\xra{\sim}Z$ which sends $\xi^x\in \Lambda$ to $\calP_x\in Z$.
\end{theorem}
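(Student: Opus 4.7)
The plan is to define $\Phi$ by evaluation on the standard basis: for $\psi \in \frN^*$, set $\Phi(\psi) := (\psi(\delta_y))_{y \in W}$. Since $(\calP_x)_y = d_{x,y} = \xi^x(\delta_y)$ by definition, this automatically sends $\xi^x \mapsto \calP_x$. The whole theorem then reduces to verifying that this evaluation map has the claimed properties.

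First I would show that $\Phi$, viewed a priori as a map $\frN^* \to \prod_{y \in W} Q$, is a homomorphism of graded $R$-rings. This is essentially formal from the definition of $Q_W^*$: multiplication there is pointwise on the basis $\{\delta_y\}$, i.e.\ $(\psi_1 \psi_2)(\delta_y) = \psi_1(\delta_y)\psi_2(\delta_y)$, which matches componentwise multiplication in $\prod_y Q$; $R$-linearity follows from the $Q$-linearity of each $\psi$; and the grading is preserved because $\deg \xi^x = 2\ell(x) = \deg \calP_x$.

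Next I would show $\Phi(\frN^*) \subseteq Z$. Since $\frN^*$ is generated as an $R$-algebra by $\{\xi^x\}_{x \in W}$ and $\Phi$ is an $R$-algebra map, it suffices to verify $\Phi(\xi^x) = \calP_x \in Z$, which is exactly the content of \Cref{dxy}: parts (1) and (2) show $d_{x,y} \in R$, while part (3) gives the GKM-type congruence $d_{x,y} \equiv d_{x,ty} \pmod{\alpha_t}$. Bijectivity of $\Phi: \frN^* \to Z$ then follows at once because $\{\xi^x\}_{x \in W}$ is an $R$-basis of $\frN^*$ and $\{\calP_x\}_{x \in W} = \{\Phi(\xi^x)\}_{x \in W}$ is an $R$-basis of $Z$ by the preceding lemma, so $\Phi$ induces an $R$-module isomorphism.

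Finally, $W$-equivariance is a direct computation using $\delta_v \delta_x = \delta_{vx}$:
\[
\Phi(x \cdot \psi)_v = (x \cdot \psi)(\delta_v) = \psi(\delta_v \delta_x) = \psi(\delta_{vx}) = \Phi(\psi)_{vx} = (x \cdot \Phi(\psi))_v.
\]
The only genuinely non-formal ingredient of the whole proof is \Cref{dxy}(3), which supplies the crucial GKM congruence for $d_{x,y}$; once this is in place, everything else is a matter of unwinding the definitions and comparing bases.
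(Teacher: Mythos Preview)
Your proposal is correct and is essentially the same argument as the paper's: define $\Phi(\psi)=(\psi(\delta_y))_{y\in W}$, observe it is an $R$-ring homomorphism into $\prod_y R$ (via pointwise multiplication), use \Cref{dxy} to see $\Phi(\xi^x)=\calP_x\in Z$, conclude bijectivity by matching the $R$-bases $\{\xi^x\}$ and $\{\calP_x\}$, and check $W$-equivariance by the identity $\psi(\delta_v\delta_x)=\psi(\delta_{vx})$. The only cosmetic difference is that you first land in $\prod_y Q$ and then restrict, whereas the paper lands directly in $\prod_y R$; both are fine since $d_{x,y}\in R$.
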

\begin{proof}
	For $\psi\in \frN^*$ we can define $\Phi(\psi)=(\psi(\delta_{x}))_{x\in W}\in \prod_{x\in W}R$. The map $\Phi$ is a homomorphism of $R$-rings from $\frN^*$ to $\prod_{x\in W}R$.
	
	We have $\Phi(\xi^x)_y=d_{x,y}=(\calP_x)_y$. In particular, $\Phi(\Lambda)\cug Z$ and, as a morphism  of left $R$-modules, it sends a basis into a basis, hence it is an isomorphism.
	
	For any $x,y \in W$ we have 
	\[(\Phi(x\cdot \psi))_y=(x\cdot \psi)(\delta_{y})=\psi(\delta_{yx})=\Phi(\psi)_{yx}=(x\cdot \Phi(\psi))_y.\]
	 It follows that $\Phi(x\cdot \psi)=x\cdot \Phi(\psi)$.
\end{proof}

There is also a right action of $R$ on $Z$ defined by $(z_x)_{x\in W} \cdot f=(x(f)z_x)_{x\in W}$. This corresponds, via the isomorphism $\Phi$, to the action on $\frN^*$ defined by $(f\bullet \psi)(v):=\psi(vf)$.
For $\lambda \in V$, we have 
\[ \lambda\bullet \xi^w=w(\lambda)\xi^w -\sum_{w\xto{t}v} \alpha_t^\vee(\lambda) \xi_v.\]
as it follows immediately from \eqref{pieri}. So we also obtain
\begin{equation}\label{PieriZ}
	\calP_{w}\cdot \lambda=w(\lambda)\calP_{w}+\sum_{w\xto{t}v}\partial_t(\lambda)\calP_{v}.
\end{equation}


For a subset $\Omega\cug W$ we define $\hat{Z}^\Omega$ to be the image of the composition 
$$\hat{Z}\hookrightarrow \prod_{v\in W}R\twoheadrightarrow \prod_{v\in \Omega}R.$$
We define $Z^\Omega$ similarly. Clearly, for any finite subset $\Omega$ we have $Z^\Omega=\hat{Z}^\Omega$. 
A subset $\Omega$ is said to be \emph{upwardly closed} if whenever $v\in \Omega$ and $w\geq v$, then $w\in \Omega$.

\begin{definition}
	Let $Z$-mod${}^f$ be the full subcategory $Z$-mod whose objects are graded $Z$-modules $M$ which are finitely generated and torsion free over $R$ and such the $Z$-module structure factors through $Z^\Omega$ for some finite $\Omega\cug W$.
\end{definition}

We define similarly $\hat{Z}$-mod${}^f$. The restriction functor $\hat{Z}$-mod$\raw Z$-mod induces an equivalence of categories $\hat{Z}$-mod${}^f\xra{\sim} Z$-mod${}^f$.

For any $s\in S$, we set
\[Z^s :=
\left\{
(z_w) \in Z
\;\middle|\;
z_{ws} = z_w \text{ for any } w \in W 
\right\}.\]

\begin{lemma}\label{Zsbasis}
The subring $Z^s$ is the subring of $Z$
consisting of $s$-invariants. As a left $R$-module, it has a basis given by $\{\calP_w\}_{ws>w}$.

Let $\varpi_s\in V$ be such that $\alpha_s^\vee(\varpi_s)=1$. Then $Z$ is free as a $Z^s$ module with basis $1$ and $(w(\varpi_s))_{w\in W}$.
\end{lemma}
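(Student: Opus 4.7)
The plan is to dispatch the three assertions in order, with the bulk of the work concentrated in a single divisibility verification.

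First, the identification of $Z^s$ with the $s$-invariants is immediate from the definition of the $W$-action on $Z$: $(s\cdot z)_w = z_{ws}$, so $s\cdot z = z$ if and only if $z_{ws} = z_w$ for all $w$. For the basis statement, I would transport the corresponding result across the isomorphism $\Phi:\frN^* \isom Z$ of \Cref{lambda=z}. Since $\Phi$ is $W$-equivariant, it restricts to an isomorphism $(\frN^*)^s \isom Z^s$ sending $\xi^w$ to $\calP_w$, and the needed basis description is then exactly \Cref{corsxi}.

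For the freeness of $Z$ over $Z^s$, set $z := (w(\varpi_s))_{w \in W}$; this tuple lies in $Z$ because $(tw)(\varpi_s) - w(\varpi_s) = -\alpha_t^\vee(w(\varpi_s))\,\alpha_t$ is divisible by $\alpha_t$ for every $t \in T$. Given $f \in Z$, I would define
\[ b_w := \frac{f_w - f_{ws}}{w(\alpha_s)}, \qquad a_w := f_w - b_w\, w(\varpi_s), \]
and show $f = a + b\,z$ with $a,b \in Z^s$. The division defining $b_w$ makes sense in $R$: writing $t = wsw^{-1}$, the root $\alpha_t$ is a scalar multiple of $w(\alpha_s)$, and the moment-graph condition for $f$ at $t$ gives $\alpha_t \mid f_w - f_{ws}$. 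A short direct calculation using $s(\varpi_s) = \varpi_s - \alpha_s$ then yields $a_{ws} = a_w$ and $b_{ws} = b_w$, so $a,b$ are $s$-invariant.

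The main obstacle is verifying that $b$ (and hence $a = f - b z$) lies in $Z$, i.e., that $\alpha_r \mid b_w - b_{rw}$ for every reflection $r \in T$. The case $r = wsw^{-1}$ is automatic because then $rw = ws$ and $b_{ws} = b_w$ on the nose. For $r \neq wsw^{-1}$, I would argue modulo $\alpha_r$ in the domain $R/(\alpha_r)$: the identities $f_w \equiv f_{rw}$, $f_{ws} \equiv f_{rws}$, and $w(\alpha_s) \equiv (rw)(\alpha_s)$ (the last from $r(x) = x - \alpha_r^\vee(x)\alpha_r$) reduce the claim to dividing a common congruence by $w(\alpha_s)$. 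This division is legitimate precisely because the GKM condition forces $w(\alpha_s) = \pm \alpha_{wsw^{-1}}$ to remain nonzero in $R/(\alpha_r)$ when $r \neq wsw^{-1}$. Hence $b \in Z$, and then $a = f - bz \in Z$ as well.

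Uniqueness of the decomposition is a short check: if $a + bz = 0$ with $a,b \in Z^s$, reading off the $w$ and $ws$ components and subtracting gives $b_w\,w(\alpha_s) = 0$, whence $b_w = 0$ and $a_w = 0$ since $R$ is a domain.
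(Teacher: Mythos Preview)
Your proof is correct. The first two assertions match the paper's approach exactly: the paper also invokes \Cref{lambda=z} and \Cref{corsxi} to handle the identification of $Z^s$ with the $s$-invariants and the basis $\{\calP_w\}_{ws>w}$.

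For the freeness of $Z$ over $Z^s$, the paper simply cites \cite[Lemma 5.2]{AbeBimodule}, whereas you supply a direct argument. Your construction of $a$ and $b$ from $f$ is the natural one, and the delicate point---that $b$ satisfies the moment-graph congruences---is handled cleanly. To make the step ``dividing a common congruence by $w(\alpha_s)$'' fully rigorous, note that the argument really reads: from $w(\alpha_s)\,b_w = f_w - f_{ws}$ and $(rw)(\alpha_s)\,b_{rw} = f_{rw} - f_{rws}$ one obtains, modulo $\alpha_r$, the equality $\overline{w(\alpha_s)}\,(\overline{b_w} - \overline{b_{rw}}) = 0$ in the integral domain $R/(\alpha_r)$; the GKM condition (together with $r \neq (rw)s(rw)^{-1}$, which also follows from $r\neq wsw^{-1}$) ensures $\overline{w(\alpha_s)}\neq 0$, so one may cancel. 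Your explicit proof has the advantage of being self-contained and of making visible exactly where the GKM hypothesis enters; the paper's citation is of course shorter.
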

\begin{proof}
	The first statement follows from \Cref{lambda=z} and \Cref{corsxi}. The second statement follows from \cite[Lemma 5.2]{AbeBimodule}.
\end{proof}

\subsection{Relationship with Soergel bimodules}\label{sbim}

For a graded module $M$ and $i\in \bbZ$ let $M(i)$ denote the shifted module, i.e. $(M(i))^k=M^{i+k}$.

For $s \in S$ we denote by $B_s$ the graded $R$-bimodule $R\otimes_{R^s}R(1)$. 
Let $\undw=s_1s_2\ldots s_k$ be an expression, not necessarily reduced. The Bott--Samelson bimodule $\BS(\undw)$ is the graded $R$-bimodule defined as
$$\BS(\undw)=B_{s_1}\otimes_R B_{s_2}\otimes_R\ldots \otimes_R B_{s_k}=R\otimes_{R^{s_1}} R\otimes_{R^{s_2}}R\otimes \ldots\otimes_{R^{s_k}} R(k).$$ 

We denote the element $1\otimes \ldots \otimes 1 \in \BS(\undw)$ by $1^\otimes_\undw$.

\begin{definition}
	Let $\calC$ be the full subcategory of finitely generated graded $R$-bimodules $M$ which are flat as left $R$-modules, equipped with a decomposition $Q\otimes_R M=\bigoplus_{w\in W} M_Q^w$ of graded $(Q,R)$-bimodules such that
	\begin{itemize}
		\item $M_Q^w=0$ for all but finitely many $w\in W$,
		\item for $m\in M_Q^w$ we have $mf=w(f)m$.
	\end{itemize} 
\end{definition}

The bimodules $B_s$ have a unique decomposition $(B_s)_Q=(B_s)_Q^e\oplus (B_s)_Q^s$ (cf. \cite[\S 2.4]{AbeBimodule}). By taking tensor products, this induces a canonical decomposition for $Q\otimes_R \BS(\undw)$. We can then regard $\BS(\undw)$ as an object in $\calC$ together with this decomposition.

\begin{remark}
	 In the original definition in \cite[\S 2.2]{AbeBimodule}, $Q$ is the ring of fractions of $R$. However, as noted in \cite[Remark 3.8]{AbeHomomorphism}, to ensure that $M^w_Q$ is a graded module, $Q$ must be  taken to be the localization of $R$ at the roots.
\end{remark}

\begin{definition}
	The category of Soergel bimodules $\sbim$ is the smallest full subcategory of $\calC$ that contains all the Bott--Samelson bimodules $\BS(\undw)$ for any expression $\undw$ and that is closed under grading shifts, finite direct sums and direct summands.
\end{definition}

\begin{remark}
	If the representation $V$ of $W$ is faithful, then the decomposition $Q\otimes B=\bigoplus B_Q^x$ can be retrieved directly from $R$-bimodule structure. In this case, $\sbim$ is a subcategory of the category of $R$-bimodules. (cf. \cite[Remark 2.2]{AbeBimodule}).
	In general, though, giving such a decomposition requires additional data.
\end{remark}

Morphisms in $\sbim$ are degree-preserving morphisms $B\to B'$ of $R$-bimodules, i.e., homogeneous of degree $0$, which send $B_Q^w$ to $(B')_Q^w$. For $B,B'\in \sbim$ we write
$$\Hom^\bullet(B,B')=\bigoplus_{i\in \bbZ}\Hom_{\sbim}(B,B'(i)).$$

Recall that we have a left and right $R$-action on $Z$. By restriction, we can regard any $Z$-module as an $R$-bimodule. We denote by $F:Z$-Mod$\to R$-bimod the restriction functor.

Define a $Z$-module structure on $R$ by $(z_w)_{w\in W} f = z_ef$ for $(z_w)_{w\in W} \in Z$ and $f \in R$ and
denote this $Z$-module by $R_e$. Then $F(R_e) = R$. Let $Z$-mod${}^S$ be the full-subcategory
of $Z$-mod${}^f$ consisting of the direct summands of direct sums of $Z \otimes_{ Z^{s_1}} \cdots\otimes_{Z^{s_l}} R_e(n)$ with 
$s_1, \ldots , s_l\in S$ and $n \in \bbZ$.

\begin{theorem}[{\cite[Theorem 5.4]{AbeBimodule}}]\label{equivZSBim}
	Assume that $V$ is a GKM-realization. The functor $F$ induces an
	equivalence $Z$-mod${}^S \rightarrow \sbim$.
\end{theorem}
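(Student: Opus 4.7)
My plan is to split the equivalence into three parts: (i) showing that $F$ sends $Z$-mod${}^S$ into $\sbim$; (ii) essential surjectivity; and (iii) full faithfulness. Parts (i) and (ii) reduce to matching a set of generators, while (iii) is the technical heart and will rely on a localisation argument: the GKM condition forces the $Z$-action on an object of $\calC$ to be already encoded in its $W$-grading.

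For (i) and (ii) I would analyse the basic building blocks first. The module $R_e$ is a $Z$-module on which $Z$ acts through the evaluation map $(z_w)\mapsto z_e$, so $F(R_e)=R$ with its standard bimodule structure, and its $Q$-localisation is concentrated in the component indexed by $e$. By \Cref{Zsbasis}, $Z$ is free as a $Z^s$-module on $\{1,(w(\varpi_s))_{w\in W}\}$, so $Z\otimes_{Z^s}R_e$ is free of rank two as a left $R$-module. Matching generators, I would identify $F(Z\otimes_{Z^s}R_e)$ with $B_s$ as an object of $\calC$, verifying that the canonical decomposition of $B_s$ into its $e$- and $s$-parts agrees with the decomposition inherited from the $Z$-module structure. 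Iterating, and checking that $F$ is compatible with the relevant tensor products, one obtains an isomorphism in $\calC$ between $F(Z\otimes_{Z^{s_1}}\cdots\otimes_{Z^{s_l}}R_e)$ and $\BS(s_1\cdots s_l)$ up to grading shift; passing to summands then yields (i) and (ii).

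For (iii) I would argue via localisation. The isomorphism $\Phi$ of \Cref{lambda=z} together with the GKM condition gives, for any finite upwardly closed $\Omega\subset W$, an identification $Q\otimes_R Z^\Omega\cong\prod_{w\in\Omega}Q$ with orthogonal idempotents $\{\epsilon_w\}$. For $M\in Z$-mod${}^f$, torsion-freeness over $R$ makes $M\hookrightarrow Q\otimes_R M$ injective, and the decomposition $Q\otimes_R M=\bigoplus_w \epsilon_w(Q\otimes_R M)$ agrees with the $\calC$-decomposition of $F(M)$ because the twisting rule $mf=w(f)m$ on $F(M)_Q^w$ is exactly the condition that the $w$-component of the right $R$-action factor through evaluation $f\mapsto w(f)$. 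Hence any morphism $\varphi\colon F(M)\to F(N)$ in $\calC$ commutes with the $\epsilon_w$, is $\prod_w Q$-linear, and by restricting back from $Q\otimes_R Z$ to $Z$ is automatically $Z$-linear.

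The hardest step I expect is the base case in the second paragraph: verifying the compatibility of the two decompositions of $F(Z\otimes_{Z^s}R_e)\cong B_s$ and propagating this through iterated tensor products. That is where the explicit basis from \Cref{Zsbasis} and the twisting rule have to be used carefully, and where one must confirm that the $Z^s$-factor in $Z\otimes_{Z^s}R_e$ really corresponds, under $F$, to the $R^s$-factor in $B_s=R\otimes_{R^s}R$. Once this compatibility is in place, the rest reduces to bookkeeping, and the GKM hypothesis intervenes only to ensure the clean identification $Q\otimes_R Z^\Omega\cong\prod_{w\in\Omega}Q$ that powers the localisation argument.
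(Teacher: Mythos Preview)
The paper does not give its own proof of this statement: \Cref{equivZSBim} is quoted from \cite[Theorem 5.4]{AbeBimodule} and is used as a black box. So there is no proof in the paper to compare against.

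That said, your outline is a sound reconstruction of the standard argument and is essentially how Abe proceeds. The identification $F(Z\otimes_{Z^{s_1}}\cdots\otimes_{Z^{s_l}}R_e)\cong\BS(s_1\cdots s_l)$ in $\calC$ is indeed the content of \cite[Lemma~5.3]{AbeBimodule}, and your localisation argument for full faithfulness is the right idea: once you know $Q\otimes_R Z^\Omega\cong\prod_{w\in\Omega}Q$ (which follows from the triangularity in \Cref{dxy} together with the invertibility of each $p_w$ in $Q$), every morphism in $\calC$ automatically commutes with the idempotents $\epsilon_w$ and hence with all of $Z$. Two small points worth tightening: first, the hypothesis that $\Omega$ be upwardly closed is not needed for the isomorphism $Q\otimes_R Z^\Omega\cong\prod_{w\in\Omega}Q$; any finite $\Omega$ works. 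Second, the ``agreement of decompositions'' you invoke in (iii) is really a \emph{definition} rather than a verification: the functor $F$ lands in $\calC$ precisely by declaring $F(M)_Q^w:=\epsilon_w(Q\otimes_R M)$, and the twisting rule $mf=w(f)m$ then holds because the right $R$-action on $Z$ is $(z_v)\cdot f=(v(f)z_v)$. With that clarified, the compatibility check you flag as the ``hardest step'' in the base case becomes straightforward.
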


Let $[\sbim]$ denote the split Grothendieck group of the category of Soergel bimodules. We consider $[\sbim]$ as a $\bbZ[v,v^{-1}]$ algebra via $v\cdot [B]=[B(1)]$.
The tensor product $\otimes_R$ equips the category $\sbim$ with a monoidal structure, which induces a $\bbZ[v,v^{-1}]$-algebra structure on $[\sbim]$. 

\begin{theorem}[{Soergel's Categorification Theorem, \cite[Theorem 4.1]{AbeBimodule}}]\label{sct}
	
	Let $w\in W$ and let $\undw$ be a reduced expression for $w$. Then there exists a unique direct summand $B_w$ of $\BS(\undw)$ which is not a summand of $\BS(\undv)$ for any expression $\undv$ with $\ell(\undv)< \ell(w)$.
	Moreover, the summand $B_w$ can be characterized in any decomposition of $\BS(\undw)$ as the indecomposable summand containing $1^\otimes_\undw$.
\end{theorem}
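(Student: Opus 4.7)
The plan is to induct on $k=\ell(\undw)$, combining the fact that $\sbim$ is a Krull--Schmidt category with the support stratification from the localization $Q\otimes_R(-)=\bigoplus_{x\in W}(-)_Q^x$. Since every $B\in\sbim$ has graded endomorphism ring whose positive-degree ideal acts locally nilpotently on each graded piece, idempotents lift from the finite-dimensional degree-zero quotient, so $\BS(\undw)$ admits an essentially unique decomposition $\BS(\undw)=\bigoplus_i B_i$ into indecomposables.

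Next I would pin down the support. Iterating the canonical decomposition $(B_s)_Q=(B_s)_Q^e\oplus (B_s)_Q^s$ along the tensor product $\BS(\undw)=B_{s_1}\otimes_R\cdots\otimes_R B_{s_k}$ yields
\[ Q\otimes_R \BS(\undw)=\bigoplus_{\unde\in\{0,1\}^k} Q\cdot m_{\unde},\qquad m_{\unde}\in \BS(\undw)_Q^{x(\unde)},\qquad x(\unde):=s_1^{e_1}\cdots s_k^{e_k}.\]
When $\undw$ is a reduced expression for $w$, the equation $x(\unde)=w$ forces $\unde=(1,\ldots,1)$ (any other solution would give a shorter expression for $w$), so $\BS(\undw)_Q^w$ is free of rank one over $Q$, and a direct check shows that $1^\otimes_\undw$ has nonzero image in $\BS(\undw)_Q^w$. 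Given any indecomposable decomposition $\BS(\undw)=\bigoplus_i B_i$, additivity of localization forces $\sum_i\dim_Q (B_i)_Q^w=1$, so a unique index $i_0$ satisfies $(B_{i_0})_Q^w\neq 0$; set $B_w:=B_{i_0}$. Writing $1^\otimes_\undw=\sum_i b_i$ accordingly, only $b_{i_0}$ has nontrivial $w$-localization, which identifies $B_w$ as the indecomposable summand containing $1^\otimes_\undw$.

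Finally, $B_w$ cannot be a summand of any $\BS(\undv)$ with $\ell(\undv)<\ell(w)$: iterating as above, the support of $\BS(\undv)$ consists of subwords of $\undv$, each of length $\leq\ell(\undv)<\ell(w)$, so $w$ lies outside this support, contradicting $(B_w)_Q^w\neq 0$. All other summands $B_i$ of $\BS(\undw)$ have support contained in $\{x<w\}$, and one verifies by induction on $k$ that each such summand appears in some shorter Bott--Samelson bimodule, which gives uniqueness of $B_w$ among the summands with the stated minimality property. The main obstacle I anticipate is the characterization statement in every decomposition: one must argue, via Krull--Schmidt together with the rank-one-ness of $\BS(\undw)_Q^w$, that no automorphism of $\BS(\undw)$ can redistribute the $w$-component of $1^\otimes_\undw$ across several indecomposable summands in a way that obscures the distinguished one, so that the summand through which $1^\otimes_\undw$ has nontrivial $w$-localization is always isomorphic to $B_w$.
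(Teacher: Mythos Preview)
The paper does not prove this theorem; it is quoted verbatim from \cite[Theorem~4.1]{AbeBimodule} and used as a black box. So there is no in-paper argument to compare against, and your sketch is essentially the standard Soergel/Abe approach: Krull--Schmidt plus the rank-one piece $\BS(\undw)_Q^w$ to isolate a distinguished summand, together with an induction on length to dispose of the remaining summands.

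Two remarks on the sketch itself. First, you are making the ``containing $1^\otimes_\undw$'' characterization harder than it is. The degree $-\ell(\undw)$ component of $\BS(\undw)$ is one-dimensional, spanned by $1^\otimes_\undw$; hence in \emph{any} direct sum decomposition exactly one indecomposable summand meets that degree, and $1^\otimes_\undw$ lies entirely in it. Your localization computation then shows that this summand is the unique one with nonzero $w$-support, so no delicate redistribution argument via automorphisms is needed.

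Second, the genuine content you are glossing over is the inductive claim that every other indecomposable summand already occurs in a shorter Bott--Samelson. Writing $\BS(\undw)=\BS(\undv)\otimes_R B_s$ and decomposing $\BS(\undv)$ by induction, one must analyze $B_x\otimes_R B_s$ for each $B_x$ appearing; when $xs<x$ this requires the splitting $B_x\otimes_R B_s\cong B_x(1)\oplus B_x(-1)$, which is a separate lemma (proved in \cite{AbeBimodule} via the $\nabla$-filtration and the hom formula, not by pure support bookkeeping). Without it the induction does not close, so ``one verifies by induction'' hides the main technical step.
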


Let $\mathcal{H}(W,S)$ be the Hecke algebra of $W$. This is the algebra over $\bbZ[v,v^{-1}]$ with basis $\{\bfH_w\}_{w\in W}$  and relations:
\begin{itemize}
	\item $(\bfH_s-v^{-1})(\bfH_s+v)=0$ for any $s \in S$.
	\item $\bfH_{w}\bfH_s=\bfH_{ws}$ for any $w\in W$ with $ws>w$.
\end{itemize}

\begin{theorem}[Soergel's hom formula
	 {\cite[Theorems 4.3 and 4.6]{AbeBimodule}}]\label{Hiso}
	There exists an isomorphism of $\bbZ[v,v^{-1}]$-algebras $\mathcal{E}: [\sbim]\raw\mathcal{H}(W,S)\xra{\sim}$ such that $[B_s]\mapsto \bfH_s+v$.

	Let $B,B'\in \sbim$. Then $\Hom^\bullet(B,B')$ is a graded free left $R$-module, and 
	$$\grrk \Hom^\bullet(B,B')=(\bar{[B]},[B'])$$
	where $(-,-)$ is the standard pairing of the Hecke algebra (cf.  \cite[Definition 3.13]{EMTWIntroduction}) and $\bar{(-)}$ denotes the Kazhdan--Lusztig involution (cf.  \cite[Definition 3.13]{EMTWIntroduction}).
\end{theorem}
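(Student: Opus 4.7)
The plan is to prove the two assertions separately, using the equivalence $\sbim\cong Z\text{-mod}^S$ of Theorem~\ref{equivZSBim} as a bridge to the combinatorial side where Hom-spaces become more tractable.

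\emph{For the isomorphism $\mathcal{E}$}, I would define the map on Bott--Samelson generators by sending $[\BS(\underline{s_1\ldots s_k})]$ to $(\bfH_{s_1}+v)\cdots(\bfH_{s_k}+v)\in\calH(W,S)$, and check well-definedness on $[\sbim]$. The only nontrivial relation to verify beyond the monoidal structure is $B_s\otimes_R B_s\cong B_s(1)\oplus B_s(-1)$, which translates to the quadratic Hecke relation $(\bfH_s+v)^2=(v+v^{-1})(\bfH_s+v)$. Surjectivity then follows by induction on length from Theorem~\ref{sct}, expressing each $\bfH_w$ recursively from the class of $B_w$ modulo classes $\bfH_x$ with $x<w$. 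Injectivity is a Krull--Schmidt argument: the classes $\{[B_w]\}_{w\in W}$ form a $\bbZ[v,v^{-1}]$-basis of $[\sbim]$ and map to a basis of $\calH(W,S)$.

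\emph{For the Hom formula}, both sides are bi-additive on $[\sbim]$, so I would reduce to computing $\Hom^\bullet(\BS(\undw),\BS(\undv))$ for expressions $\undw,\undv$. The central tool is a $\Delta$-filtration: using the canonical decomposition $Q\otimes_R B=\bigoplus_{w\in W} B_Q^w$, one sets $B_{\leq w}:= B\cap\bigoplus_{x\leq w} B_Q^x$ inside $Q\otimes_R B$. For $B\in\sbim$ this yields a finite filtration whose graded subquotients are direct sums of graded shifts of standards $\Delta_x$, and the multiplicities recover the expansion of $[B]$ in the $\{\bfH_x\}$-basis of $\calH(W,S)$. A direct computation then shows $\Hom^\bullet(\Delta_x,\Delta_y)$ is a free $R$-module of rank one if $x=y$ (with an explicit grading shift) and vanishes otherwise; summing contributions along the filtration matches the standard Hecke pairing $(\bar{[B]},[B'])$.

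The main obstacle is establishing that $\Hom^\bullet$ is additive along the $\Delta$-filtration, which requires the vanishing of $\Ext^1(\Delta_x,\Delta_y)$ whenever $x\not\leq y$ together with the dual statement for the opposite filtration. In the GKM setting this vanishing is enabled by the structure algebra $Z$: a section $(z_v)$ extends from a Bruhat subinterval to a larger one precisely when the congruences $z_v\equiv z_{tv}\pmod{\alpha_t}$ can be simultaneously satisfied, and the GKM condition guarantees that the resulting obstruction vanishes. The remaining steps are bookkeeping --- tracking grading shifts, verifying $R$-flatness along the filtration, and identifying the rank formula with the standard pairing in $\calH(W,S)$.
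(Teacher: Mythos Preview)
The paper does not prove this theorem at all; it is quoted verbatim from \cite[Theorems 4.3 and 4.6]{AbeBimodule} as background. So there is no ``paper's own proof'' to compare against, and your task reduces to whether the sketch you give is a viable outline of the standard argument.

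Your strategy is essentially the Soergel--Abe one, but two points need correction. First, invoking Theorem~\ref{equivZSBim} is both unnecessary and potentially circular: in \cite{AbeBimodule} the equivalence with $Z\text{-mod}^S$ is Theorem~5.4, proved \emph{after} Theorems~4.3 and~4.6. Nothing in your actual argument uses $Z$ in an essential way, so you should drop that framing. Second, and more substantively, the filtration step as written does not close. It is true in Abe's category $\calC$ that $\Hom^\bullet(\Delta_x,\Delta_y)\cong R\cdot\delta_{x,y}$, since morphisms must respect the $Q$-decomposition. But to obtain additivity of $\Hom^\bullet(B,B')$ along a $\Delta$-filtration of $B'$ you would need $\Ext^1(\Delta_x,\Delta_y)=0$ for all pairs that arise, and this is false in general (and the category $\calC$ is not abelian, so one must be careful what $\Ext$ even means). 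The standard remedy --- in both Soergel's and Abe's proofs --- is to use a $\nabla$-filtration on the target: one shows directly that $\Hom^\bullet(\Delta_x,\nabla_y)$ is free of rank $\delta_{x,y}$ and that the relevant connecting maps vanish, then identifies $\nabla$-multiplicities of $B'$ with $\Delta$-multiplicities of $\bbD B'$ via duality, which is exactly what produces the bar in $(\bar{[B]},[B'])$. Your phrase ``the dual statement for the opposite filtration'' may be gesturing at this, but as stated the argument pairs $\Delta$ against $\Delta$ and does not go through. The GKM extension argument you sketch is not how the vanishing is established in \cite{AbeBimodule}; there it is done by a direct analysis of the support filtration inside $\calC$.
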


If $B$ is a  self-dual bimodule (that is, if  $\bar{[B]}=[B]$), the pairing can be expressed in simpler terms (cf. \cite[Lemma 3.19]{EMTWIntroduction}). If $[B] = \sum c_x \bfH_x$ and $[B']= \sum d_x \bfH_x$, then 
\[\grrk\Hom^{\bullet}(B,B')=(\bar{[B]},[B'])=
\sum_{x\in W} c_xd_x.\]

\section{Cohomology and Homology within Soergel bimodules}

\subsection{Light leaves basis}

Let $\undw=s_1\ldots s_\ell$ be an expression of length $\ell$ and let $\unde\in \{0,1\}^\ell$. We write $\undw^\unde = s_1^{e_1}\cdots s_\ell^{e_\ell}$ and we say that $\unde\subset \undw$ is a subexpression of $\undw$ with target $x=\undw^\unde$.

For $i\leq \ell$, let $x_i=s_1^{e_1}\ldots s_\ell^{e_i}$.
We decorate the sequence $\unde$ with a sequence of $U$ and $D$ as follows. At the index \( i \) we decorate with $U$ if \( x_{i-1}s_i > x_{i-1} \), and with \( D \) otherwise. 
The \emph{defect} \( \defect(\unde) \) of \( \unde \) is defined by 
\[
\defect(\unde) := \#\{i \mid \text{label at }i \text{ is } U, \, e_i = 0\} - \#\{i \mid \text{label at } i \text{ is } D, \, e_i = 0\}.
\]

We say that $x\leq \undw$ if there exists $\unde\subset \undw$ with $\undw^\unde=x$. For any $x\leq \undw$, by \cite[Prop. 12.20]{EMTWIntroduction} there exists a unique subexpression $\can_x\subset \undw$ such that $\can_x$ is decorated only with $U$'s and $\undw^{\can_x}=x$. We call $\can_x$ the \emph{canonical subexpression} for $x$.

For each subexpression $\unde\subset \undw$, Libedinsky in \cite{LibSur} defined a morphism $\LL_{\undw,\unde}=\Hom^{\defect(\unde)}(\BS(\underline{w}),\BS(\undx))$, where $\undx$ is a reduced expression for $x$ (see also \cite[\S 10.4]{EMTWIntroduction} and \cite[Definition 3.8]{AbeBimodule}). The morphism $\LL_{\undw,\unde}$ is not uniquely identified but depends on several choices of reduced expressions  made in its construction.
Its value on $1_\undw^\otimes$ does not depend on these choices.

\begin{lemma}\label{D=0}
	Let $\undw$ be an expression and $\unde\subset \undw$ be a subexpression. Then
	$$\LL_{\undw,\unde}(1^\otimes_{\undw})=
	\begin{cases}1^\otimes_{\underline{x}} & \text{ if } \unde=can_{\undw^\unde},\\0 & \text{ if } \unde\text{ has at least one }D.\end{cases}$$
\end{lemma}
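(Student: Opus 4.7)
My plan is to proceed by induction on $\ell(\undw)$. Writing $\undw = \undw' s$ and $\unde = \unde' e_\ell$, and letting $x' = (\undw')^{\unde'}$ with reduced expression $\undx'$, the light leaf factors as $(\LL_{\undw',\unde'} \otimes \Iden_{B_s})$ post-composed with a step-$\ell$ morphism $f: \BS(\undx') \otimes B_s \to \BS(\undx)$. Evaluating on $1^\otimes_\undw = 1^\otimes_{\undw'} \otimes (1 \otimes 1)$ reduces the claim to computing $f\bigl(\LL_{\undw',\unde'}(1^\otimes_{\undw'}) \otimes (1 \otimes 1)\bigr)$.

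If $\unde'$ already contains a $D$, then by induction $\LL_{\undw',\unde'}(1^\otimes_{\undw'}) = 0$ and the whole expression vanishes. Otherwise $\unde'$ is all-$U$, hence coincides with $\can_{x'}$ by uniqueness of the canonical subexpression, and the inductive hypothesis gives $\LL_{\undw',\unde'}(1^\otimes_{\undw'}) = 1^\otimes_{\undx'}$. In the $U1$ case $f$ is the identity and the image is $1^\otimes_{\undx'} \otimes (1\otimes 1) = 1^\otimes_{\undx}$; in the $U0$ case $f$ is a tensor product with the counit $B_s \to R(1)$, which sends $1 \otimes 1 \mapsto 1$, yielding $1^\otimes_{\undx'} = 1^\otimes_{\undx}$.

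For the remaining case, when step $\ell$ is a $D$ step (so $\unde$ has at least one $D$), rather than inspecting $f$ explicitly I would dispatch the whole second branch of the statement in one line with a raw-degree count. The morphism $\LL_{\undw,\unde}$ lies in $\Hom^{\defect(\unde)}(\BS(\undw), \BS(\undx))$, so raises internal degree by $\defect(\unde)$. Writing $\#U0, \#U1, \#D0, \#D1$ for the counts of each step type, one has $\ell(\undw) = \#U0 + \#U1 + \#D0 + \#D1$, $\ell(\undx) = \ell(x) = \#U1 - \#D1$, and $\defect(\unde) = \#U0 - \#D0$, and these identities combine to give
\[-\ell(\undw) + \defect(\unde) = -\ell(\undx) - 2(\#D0 + \#D1).\]
As soon as $\unde$ contains a $D$, the image of $1^\otimes_\undw$ under $\LL_{\undw,\unde}$ would have to lie in $\BS(\undx)$ in a degree strictly below the minimum $-\ell(\undx)$, and therefore must be zero. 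I do not foresee any substantive obstacle; the only point meriting some care is checking that the $U1$ and $U0$ step morphisms act on $1 \otimes 1 \in B_s$ as stated, which is immediate from their diagrammatic definition in \cite[\S 10.4]{EMTWIntroduction}.
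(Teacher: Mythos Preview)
Your argument is correct. The paper itself does not give a proof but defers to \cite[Proposition 12.19]{EMTWIntroduction} and \cite[Proposition 3.10]{AbeBimodule}; those references proceed, as you do, by induction on $\ell(\undw)$ and by inspecting the four step types. The main point of difference is your treatment of the $D$ case: the cited proofs compute directly that the $D1$ and $D0$ step morphisms annihilate $1^\otimes$ (for instance, the trivalent vertex $B_s\otimes B_s\to B_s$ sends $1\otimes 1\otimes 1$ to $\partial_s(1)\otimes 1=0$), whereas your degree count
\[
-\ell(\undw)+\defect(\unde)=-\ell(\undx)-2(\#D0+\#D1)
\]
dispatches all $D$-containing subexpressions at once without looking at the maps. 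This is a genuine simplification and makes the inductive clause ``if $\unde'$ contains a $D$'' redundant, as you note. One small omission: the recursive light-leaf construction may insert a rex move at each step (to reach a chosen reduced expression for the intermediate target), so in the $U1$ case $f$ need not literally be the identity. This is harmless since rex moves fix $1^\otimes$, a fact the paper itself invokes later in the proof of \Cref{Pwelldef}; it would be worth stating explicitly.
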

\begin{proof}
	See \cite[Proposition 12.19]{EMTWIntroduction} or \cite[Proposition 3.10]{AbeBimodule}.
\end{proof}

\subsection{Invariant forms and duality of Soergel bimodules}\label{forms}

We define the \emph{dual} $\bbD B$ of $B\in \sbim$ to be $\bbD B=\Hom_{R-}^\bullet(B,R)$, where $\Hom_{R-}^\bullet(-,-)$ denotes the space of morphisms of left $R$-modules of all degrees, together with the decomposition
$\bbD(B)_Q=\bigoplus \bbD(B)_Q^w$, where $\bbD(B)_Q^w=\Hom_{Q-}(B_Q^w,Q)$ 
We can give $\bbD B$ a structure of a graded $R$-bimodule via $r_1fr_2(b)=f(r_1 b r_2)$, for any $f\in \bbD B$, $b\in B$ and $r_1,r_2\in R$.
(cf. \cite[Lemma 2.20]{AbeBimodule}).

\begin{definition}\label{lif}
A \emph{left invariant form} on $B\in \sbim$ is a homogeneous bilinear form 
$$\langle -,-\rangle: B\times B\raw R$$
satisfying the following conditions for all $b,b'\in B$ and $f\in R$.
\begin{itemize}
	\item $\langle b,b'f\rangle=\langle bf,b'\rangle$.
	\item$\langle fb,b'\rangle=\langle b,fb'\rangle=f\langle b,b'\rangle$.
	\item For any $x, y\in W$ with $x\neq y$ we have $\langle B_Q^x,B_Q^y\rangle =0$.
\end{itemize}
\end{definition}

A left invariant form on $B$ is the same data as a morphism $B\to \bbD(B)$ (cf. \cite[Proposition 18.9]{EMTWIntroduction})
We say that a pairing is \emph{non-degenerate} if the induced map $B\raw \bbD B$ is an isomorphism.

Let $\varpi_s\in V$ be such that $\alpha_s^\vee(\varpi_s)=1$, and let
\[c_s:=s(\varpi_s)\otimes 1-1\otimes \varpi_s\in B_s\qquad\text{and}\qquad \tilde{c}_s:=\varpi_s\otimes 1-1\otimes \varpi_s\in B_s\] 
The element $c_s$ is, up to scalar factor, the unique element of $B_s$ of degree $1$ such that $fc_s=c_sf$ for all $f\in R$. Similarly, the element $\tilde{c}_s$ is the unique element such that $s(f)\tilde{c}_s=\tilde{c}_sf$ for $f\in R$.
The map $R\raw B_s$ which sends $1$ to $c_s$ is a homomorphism of $R$-bimodules.

	Let $c_{\eid}=1\otimes 1\in B_s$. The set $\{c_\eid,c_s\}$ is a basis of $B_s$ as a left $R$-module. By abuse of notation we write $c_s^1=c_s$ and $c_s^0=c_\eid$. 
	
	Let $\undw=s_1s_2\ldots s_k$. For $\unde \in \{0,1\}^k$ we define \begin{equation}\label{c_e}c_{\unde}:=c_{s_1}^{e_1}\otimes c_{s_2}^{e_2}\otimes\cdots \otimes c_{s_k}^{e_k}.\end{equation} The set $\{c_{\unde}\mid e$ a $01$-sequence for $\undw\}$ is a basis of $\BS(\undw)$ as a left $R$-module. We denote $c_{00\ldots 0}$ by $c_{top}$. Notice that $c_{11\ldots 1}=1\otimes 1\otimes \ldots \otimes 1=1^\otimes_\undw$. We call this set the \emph{string basis} of the Bott--Samelson bimodule.
	
	Notice that a Bott--Samelson bimodule is a shifted graded algebra\footnote{By shifted graded algebra we mean an algebra $A$ such that $A(d)$ is a graded algebra in the usual sense for some $d\in \bbZ$.} with respect of component-wise multiplication, and $1^\otimes_\undw$ is its (shifted) unity of degree $-\ell(\undw)$.
	Let $\Trace:\BS(\undw)\raw R$ be the left $R$-linear map which returns the coefficient of $c_{top}$ in the string basis.
	Let 
	\begin{equation}\label{intform}\langle f,g\rangle_{\BS(\undw)}=\Trace(f\cdot g),\end{equation} where $f\cdot g$ stands for the multiplication in $\BS(\undw)$. 
	
	\begin{lemma}\label{nondeg}
	The pairing $\langle-,-\rangle_{\BS(\undw)}$ is left invariant and it is non-degenerate. 
	\end{lemma}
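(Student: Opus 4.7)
The plan is to handle invariance and non-degeneracy separately, with both arguments resting on a single structural observation: componentwise multiplication makes $\BS(\undw)$ a commutative graded $R$-algebra with unit $1^\otimes_\undw$, since each factor $B_{s_i} = R \otimes_{R^{s_i}} R$ is commutative. The trace $\Trace$ is moreover left $R$-linear by construction, being the projection onto one coordinate of the left $R$-basis $\{c_\unde\}$.

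First I would check the two $R$-bilinearity conditions of \Cref{lif}. For $f \in R$ and $b, b' \in \BS(\undw)$, commutativity and associativity in $\BS(\undw)$ give
\[(fb)\cdot b' = f\cdot (bb') = b\cdot (fb'), \qquad b\cdot (b'f) = (bb')f = (bf)\cdot b',\]
and applying $\Trace$, together with its left $R$-linearity, yields both bilinearity axioms. The orthogonality axiom follows formally from what we have just established: for $m \in \BS(\undw)_Q^x$, $m' \in \BS(\undw)_Q^y$ and $\lambda \in V$, the identities $m\lambda = x(\lambda) m$ and $m'\lambda = y(\lambda) m'$ combine with the two bilinearity axioms to give
\[x(\lambda)\langle m, m'\rangle = \langle m\lambda, m'\rangle = \langle m, m'\lambda\rangle = y(\lambda)\langle m, m'\rangle;\]
if $x \neq y$ in $W$ then some $\lambda \in V$ separates them, so $\langle m, m'\rangle = 0$ since $Q$ is a domain.

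For non-degeneracy, the plan is to read off the Gram matrix in the string basis. A direct computation in each $B_{s_i}$ gives $c_{s_i}^0\cdot c_{s_i}^0 = c_{s_i}^0$, $c_{s_i}^0\cdot c_{s_i}^1 = c_{s_i}^1$, and $c_{s_i}^1\cdot c_{s_i}^1 = -\alpha_{s_i}\, c_{s_i}^1$, which assembles componentwise into
\[c_\unde\cdot c_\undf = \Bigl(\prod_{i\,:\, e_i = f_i = 1} (-\alpha_{s_i})\Bigr)\,c_{\unde\vee\undf}.\]
Applying $\Trace$ then shows that $\langle c_\unde, c_\undf\rangle$ vanishes unless $\unde\vee\undf$ is the sequence of $c_{top}$, and equals $1$ whenever $\undf$ is the componentwise complement $\bar\unde$. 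Ordering the $2^{\ell(\undw)}$ string basis vectors so that complementation reverses the order (for instance, by the integer encoded by $\unde$) makes the Gram matrix antidiagonal lower-triangular with $1$'s on the antidiagonal, so its determinant is $\pm 1$ and the pairing is non-degenerate. The step that demands most care is pulling the scalars $-\alpha_{s_i}$ out of the $i$-th interior tensor slot in the product formula: this rests precisely on the left $R$-action being central in the algebra $\BS(\undw)$, which in turn reduces to the commutativity of each $B_{s_i}$.
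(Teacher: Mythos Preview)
Your non-degeneracy argument contains a genuine error in the product formula. The identity
\[c_\unde\cdot c_\undf=\Bigl(\prod_{i:\,e_i=f_i=1}(-\alpha_{s_i})\Bigr)\,c_{\unde\vee\undf}\]
is false in general: the scalar $-\alpha_{s_i}$ sits in the $i$-th tensor slot, and to pull it to the global left $R$-action you must commute it past each $c_{s_j}^{e_j\vee f_j}$ for $j<i$. This works when $e_j\vee f_j=1$ (since $r\,c_{s_j}=c_{s_j}\,r$), but fails when $e_j\vee f_j=0$, because $r(1\otimes 1)\neq(1\otimes 1)r$ in $B_{s_j}$ for $r\notin R^{s_j}$. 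Commutativity of the algebra does not rescue this: the interior scalar is an element of $\BS(\undw)$ distinct from the image of $-\alpha_{s_i}$ under the left structure map $R\to\BS(\undw)$. Concretely, in $\BS(st)$ the nil-Hecke relation \eqref{nilheckerel} gives
\[c_{01}\cdot c_{01}=(1\otimes 1)\otimes_R(-\alpha_t c_t)=(1\otimes(-\alpha_t))\otimes_R c_t=-s(\alpha_t)\,c_{01}+\partial_s(\alpha_t)\,c_{11},\]
so $\langle c_{01},c_{01}\rangle=\partial_s(\alpha_t)$, which is nonzero whenever $s$ and $t$ do not commute --- yet $01\vee 01=01$ is not the all-ones sequence. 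Thus your vanishing claim fails and the Gram matrix is not antidiagonal-triangular for the integer ordering. The paper simply invokes \cite[Proposition 12.18]{EMTWIntroduction} for non-degeneracy; a direct fix is to argue by induction on the last letter (equivalently, use the colexicographic order on $\{0,1\}^{\ell(\undw)}$): if $e_k=f_k=0$ the last slot contributes $1\otimes 1$ and the trace vanishes, while if $e_k+f_k=1$ one reduces to $\BS(\undv)$.

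A smaller gap: your orthogonality argument for the third axiom of \Cref{lif} assumes that distinct $x,y\in W$ are separated by some $\lambda\in V$, i.e.\ that $V$ is faithful. This does follow from the GKM hypothesis (if $w$ acts trivially then for every reflection $t$ the conjugate $wtw^{-1}$ acts as $t$, forcing $\alpha_{wtw^{-1}}\parallel\alpha_t$ and hence $w\in Z(W)$; and any nontrivial central element negates some simple root by \Cref{balanced}), but this is not recorded in the paper and deserves a line of justification. The paper takes a different route here, proving orthogonality by induction on $\ell(\undw)$ via the decomposition of $(B\otimes B_s)_Q^x$ in terms of $c_s$ and $\tilde c_s$ together with the identity $c_s\cdot\tilde c_s=0$.
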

	\begin{proof}
		The pairing $\langle-,-\rangle_{\BS(\undw)}$ satisfies the first two conditions in \Cref{lif} and it is non-degenerate by {\cite[Proposition 12.18]{EMTWIntroduction}}. It remains to show the third condition in \Cref{lif}, i.e., that it is compatible with the decomposition $\BS(\undw)_Q=\bigoplus_{w\in W}\BS(\undw)_Q^w$. We prove this by induction on $\ell(\undw)$.
		
		If $\ell(\undw)=0$ there is nothing to show. Let $\undw=\undv s$ and let $B=\BS(\undv)$. 	By \cite[Lemma 2.11]{AbeBimodule} we have
		\[(B\otimes B_s)_Q^x=\{(m\otimes c_s+m'\otimes \tilde{c_s}\mid m\in B_Q^x,\, m'\in B_Q^{xs}\}\]
	Since $\varpi_s s(\varpi_s), s(\varpi_s)+\varpi_s\in R^s$, we have	\begin{align*}c_s\cdot \tilde{c}_s&=\varpi_s s(\varpi_s)\otimes 1-(s(\varpi_s)+\varpi_s)\otimes \varpi_s-\varpi_s\otimes \varpi_s+1\otimes \varpi_s^2\\
			&=1\otimes \varpi_s s(\varpi_s)-1\otimes (s(\varpi_s)+\varpi_s)\varpi_s+1\otimes \varpi_s^2=0
		\end{align*} 
		Let $\tau=\langle c_s,c_s\rangle_{B_s}$ and $\tilde{\tau}=\langle \tilde{c}_s,\tilde{c}_s \rangle_{B_s}$.
		Then, for $b\in \BS(\undw)^x$ and $b'\in \BS(\undw)^y$ with $b=m\otimes c_s+m\otimes \tilde{c}_s$ and $b'=n\otimes c_s+n'\otimes \tilde{c_s}$, with $m\in B_Q^x$, $m'\in B_Q^{xs}$, $n\in B_Q^y$ and $n'\in B_Q^{ys}$, we have	
		\begin{align*}\langle b,b'\rangle_{\BS(\undw)}&=\langle m\otimes c_s+m'\otimes \tilde{c}_s, n\otimes c_s+n'\otimes\tilde{c}_s\rangle_{\BS(\undw)}\\&=\langle m,n\tau\rangle_{\BS(\undv)}+\langle m',n'\tilde{\tau}\rangle_{\BS(\undv)}\\
			&=
		y(\tau)\langle m,n\rangle_{\BS(\undv)}+ys(\tilde{\tau})\langle m',n'\rangle_{\BS(\undv)}\end{align*}
		and both $\langle m,n\rangle_{\BS(\undv)}$ and $\langle m',n'\rangle_{\BS(\undv)}$ vanish for $x\neq y$ by induction.
	\end{proof}
	
	\begin{definition}
		We call the pairing $\langle-,-\rangle_{\BS(\undw)}$ defined in \eqref{intform} the \emph{intersection form} of $\BS(\undw)$.
	\end{definition}
	
For any light leaf morphism  $\LL_{\undw,\unde}$, let $\flipLL_{\undw,\unde}\in \Hom^\bullet(\BS(\underline{x}),\BS(\underline{w}))$ be the adjoint morphism of $\LL_{\undw,\unde}$ with respect to the intersection form. If $\undw^\unde=\undw^\undf$ let $\bbLL_{\underline{w},\unde,\undf}=\flipLL_{\undw,\unde}\circ \LL_{\undw,f}$. We know from \cite[Theorem 5.5]{AbeBimodule} that the set $\{\bbLL_{\underline{w},\unde,\undf}\}_{\undw^\unde=\undw^\undf}$ is a basis of $\End^\bullet(\BS(\underline{w}))$ as a left $R$-module.	

If $x=\undw^\unde$, let $ll_{\underline{w},\unde}:=\flipLL_{\underline{w},e}(1^\otimes_{\undx})$. The set $\{ll_{\underline{w},\unde}\}_{\unde\subset \undw}$ is a basis of $\BS(\undw)$ as a left $R$-module (by \cite[Theorem 3.17]{AbeBimodule} or \cite[Theorem 12.25]{EMTWIntroduction}).
We have $\deg(ll_{\undw,\unde})=-\ell(\undw^\unde)+\defect(\unde)$. 
In particular, 
\[e=\can_{\undw^\unde}\iff\defect(\unde)=\ell(\undw)-\ell(\undw^\unde)\iff \deg(ll_{\undw,\unde})=\ell(\undw)-2\ell(\undw^\unde).\] 
If there is at least one $D$ in the decoration of $\unde$, then the inequality $\deg(ll_{\undw,\unde})\leq \ell(\undw)-2\ell(\undw^{\unde})-2$ holds.

Let $\{ll_{\undw,\unde}^*\}_{\unde\subset \undw}$ denote the dual basis of $\{ll_{\underline{w},\unde}\}_{\unde\subset \undw}$ with respect to the intersection form of $\BS(\undw)$.

\begin{remark}
	We can also think of duality diagrammatically (cf. \cite{EWSoergel}). In fact, taking the adjoint of a morphism with respect to the intersection form is equivalent to sending a morphism defined by a diagram $S$ to the morphism obtained by turning $S$ upside-down  \cite[Proposition A.12]{PatBases}.
\end{remark}

	\subsection{Support filtration}

	Since every $B\in \sbim$ is free as a left $R$-module, there is an inclusion $B \hookrightarrow B_Q=\bigoplus B_Q^x$.

	\begin{definition}
		We say that $b\in B$ is supported on a subset $A\subset W$ if $b\in \bigoplus_{x\in A}B_Q^x$. We denote by $\Gamma_A B$ the subset of elements supported on $A$. We write $\Gamma_{\geq y}B$ for $A=\{ x \in W \mid x \geq y\}$ and similarly for $\Gamma_{\leq y}B$. 
	\end{definition}
	
	\begin{remark}
		If the representation is reflection faithful the support filtration $\Gamma_{\leq y}$ defined above coincides with the one defined by Soergel in \cite[Definition 5.4]{SoeKazhdana} in terms of support of coherent sheaves (see \cite[Lemma 4.7]{PatSingular} or \cite[Prop. 3.25]{EKLPReduced}).
	\end{remark}
	
	\begin{lemma}\label{llbasis1}
		Let $x\leq \undw$. Then $\{ll_{\undw,\unde}\}_{\undw^\unde\leq x}$ is a basis of $\Gamma_{\leq x}\BS(\undw)$ as a left $R$-module and $\{ll_{\undw,\unde}^*\}_{\undw^\unde\geq x}$ is a basis of $ \Gamma_{\geq x} \BS(\undw)$ as a left $R$-module.
	\end{lemma}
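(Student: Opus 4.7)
The plan is to first establish that each light leaf $ll_{\undw,\unde}$ is supported on $\{y \leq \undw^\unde\}$, together with a triangularity refinement; from these the two claims will follow by a projection argument and a biorthogonality argument respectively.

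For the support property, observe that $ll_{\undw,\unde} = \flipLL_{\undw,\unde}(1^\otimes_{\undx})$ for $\undx$ a reduced expression of $x := \undw^\unde$. Since $\flipLL_{\undw,\unde}$ is a morphism in $\sbim$, it respects the $Q$-decomposition componentwise and hence preserves supports; it thus suffices to observe that $\BS(\undx) = \Gamma_{\leq x}\BS(\undx)$, which holds because every subexpression of a reduced expression for $x$ has Bruhat-target $\leq x$. Alongside, by induction on $\ell(\undw)$ via the decomposition $\BS(\undw) = \BS(\undv)\otimes_R B_s$ for $\undw = \undv s$, one establishes the triangularity: for each $y$, the projections $\{\pi_y(ll_{\undw,\unde})\}_{\undw^\unde = y}$ form a $Q$-basis of $\BS(\undw)^y_Q$.

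For the first claim, the inclusion $\{ll_{\undw,\unde}\}_{\undw^\unde \leq x} \subseteq \Gamma_{\leq x}\BS(\undw)$ and linear independence are immediate. To prove spanning, expand $b \in \Gamma_{\leq x}\BS(\undw)$ as $b = \sum_{\unde} f_{\unde}\, ll_{\undw,\unde}$ and suppose for contradiction that some $f_{\unde_0} \neq 0$ with $y_0 := \undw^{\unde_0} \not\leq x$ is chosen Bruhat-maximal among such indices. Applying the projection $\pi_{y_0}$: surviving terms require $\undw^{\unde} \geq y_0$, and for $\undw^{\unde} > y_0$ with $f_{\unde} \neq 0$, maximality of $y_0$ would force $\undw^{\unde} \leq x$, hence $y_0 \leq x$, a contradiction. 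Thus $\pi_{y_0}(b) = \sum_{\undw^{\unde} = y_0} f_{\unde}\, \pi_{y_0}(ll_{\undw,\unde})$, which vanishes since $b \in \Gamma_{\leq x}$ and $y_0 \not\leq x$, and triangularity forces all remaining $f_{\unde}$ to vanish. Exactly the same argument applies to any downward-closed set $A \subseteq W$, yielding $\Gamma_A\BS(\undw) = R\text{-span}\{ll_{\undw,\unde}\}_{\undw^{\unde} \in A}$.

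For the second claim, applying this to the downward-closed set $A = \{y : y \not\geq x\}$ gives $\Gamma_{\not\geq x}\BS(\undw) = R\text{-span}\{ll_{\undw,\unde}\}_{\undw^{\unde} \not\geq x}$. Biorthogonality of the dual basis identifies the orthogonal complement of $\Gamma_{\not\geq x}\BS(\undw)$ under the intersection form as the $R$-span of $\{ll^*_{\undw,\unde}\}_{\undw^{\unde} \geq x}$, and $Q$-compatibility of the pairing gives $\Gamma_{\geq x}\BS(\undw) \subseteq (\Gamma_{\not\geq x}\BS(\undw))^{\perp}$ by support-disjointness (a shared support element $y$ would satisfy $x \leq y \not\geq x$, impossible). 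The reverse inclusion --- that each $ll^*_{\undw,\unde}$ with $\undw^{\unde} \geq x$ lies in $\Gamma_{\geq \undw^{\unde}}\BS(\undw) \subseteq \Gamma_{\geq x}\BS(\undw)$ --- is the main obstacle, since the dual basis has no direct construction with manifest support behavior. I would prove it by induction on $\ell(\undw)$ through the tensor decomposition $\BS(\undv)\otimes_R B_s$, transferring the inductive hypothesis on $\BS(\undv)$ via the adjunction between the two ``dot'' morphisms $B_s \to R$ and $R \to B_s$.
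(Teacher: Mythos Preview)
Your argument for the first claim is correct and essentially reproduces what the paper cites as \cite[Theorem 3.17]{AbeBimodule}; the paper simply invokes that reference rather than redoing the triangularity and projection argument. Your extension to arbitrary downward-closed sets (and in particular to $\{y : y \not\geq x\}$) is also exactly what the paper uses.

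For the second claim, you correctly identify the structure: $\Gamma_{\not\geq x}\BS(\undw)$ has $R$-basis $\{ll_{\undw,\unde}\}_{\undw^\unde\not\geq x}$, its orthogonal under the intersection form is the $R$-span of $\{ll^*_{\undw,\unde}\}_{\undw^\unde\geq x}$, and $\Gamma_{\geq x}\BS(\undw)$ sits inside that orthogonal. The problem is your ``reverse inclusion'' step. You call it the main obstacle and propose an induction via $\BS(\undv)\otimes_R B_s$ and dot-morphism adjunction, but this is only a gesture: relating the dual basis of $\BS(\undw)$ to that of $\BS(\undv)$ through such an adjunction is not automatic, and you have not explained how the support condition on $ll^*$ survives the passage. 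As written this is a genuine gap.

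The paper dissolves this obstacle by passing to $Q$. The key observation (\Cref{nondeg}) is that the intersection form is non-degenerate on each summand $\BS(\undw)_Q^y$; hence over $Q$ the orthogonal of $\Gamma_{\not\geq x}\BS(\undw)_Q$ is \emph{equal to} $\Gamma_{\geq x}\BS(\undw)_Q$, not merely contained in it. This immediately forces each $ll^*_{\undw,\unde}$ with $\undw^\unde\geq x$ to lie in $\Gamma_{\geq x}\BS(\undw)_Q\cap \BS(\undw)=\Gamma_{\geq x}\BS(\undw)$. Spanning over $R$ then follows for free: any $\gamma\in\Gamma_{\geq x}\BS(\undw)$ has an $R$-expansion $\gamma=\sum_\unde p_\unde\, ll^*_{\undw,\unde}$, and since the $ll^*$ with $\undw^\unde\geq x$ already form a $Q$-basis of $\Gamma_{\geq x}\BS(\undw)_Q$, the remaining coefficients must vanish. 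No induction is needed.
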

	\begin{proof}
		The first statement follows from \cite[Theorem 3.17]{AbeBimodule}.
		From the same result, it also follows that $\{ll_{\undw,\unde}\}_{\undw^\unde\not \geq x}$ is a basis of $\Gamma_{\not \geq x}(\BS(\undw))$.
		
		By \Cref{nondeg}, the restriction of the intersection form to $\BS(\undw)_Q^x$ is non-degenerate for any $x\leq \undw$. Therefore, the orthogonal of $\Gamma_{\not \geq x}(\BS(\undw))_Q$ is $\Gamma_{ \geq x}(\BS(\undw))_Q$.
		
		The set $\{ll^*_{\undw,\unde}\}_{\unde\subset \undw}$ is an $R$-basis of $\BS(\undw)$ and $\{ll^*_{\undw,\unde}\}_{\undw^{\unde}\geq x}$ is a $Q$-basis of $\Gamma_{ \geq x}(\BS(\undw))_Q$. It remains to show that $\{ll^*_{\undw,\unde}\}_{\undw^{\unde}\geq x}$ also generates $\Gamma_{ \geq x}(\BS(\undw))$ over $R$.
		
		This follows because any element in $\gamma\in \Gamma_{ \geq x}(\BS(\undw))$ can be written as an $R$-linear combination $\gamma=\sum_{\unde} p_{\undw,\unde} ll^*_{\undw,\unde}$. But $\gamma$ can belong to $\Gamma_{ \geq x}(\BS(\undw))\subset \Gamma_{ \geq x}(\BS(\undw))_Q$ only if all the coefficients $p_{\undw,\unde}$ vanish whenever $\undw^\unde\not\geq x$.
	\end{proof}

	\begin{cor}\label{degll*}
		Let $b \in \Gamma_{\geq y}\BS(\undw)$. Then $\deg(b)\geq 2\ell(y)-\ell(\undw)$. 
	\end{cor}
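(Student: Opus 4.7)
The plan is to combine the basis statement of \Cref{llbasis1} with a degree count for the dual light leaves. I may assume $b$ is homogeneous of degree $d$. By \Cref{llbasis1}, $\{ll^*_{\undw,\unde}\}_{\undw^\unde\geq y}$ is a left $R$-basis of $\Gamma_{\geq y}\BS(\undw)$, so I can write $b = \sum_{\undw^\unde\geq y} r_\unde\, ll^*_{\undw,\unde}$ with each $r_\unde \in R$. Each nonzero summand is homogeneous of degree $d$, and since $R$ is concentrated in non-negative degrees, this forces $d \geq \deg(ll^*_{\undw,\unde})$ for every $\unde$ appearing with nonzero coefficient. The problem therefore reduces to the estimate $\deg(ll^*_{\undw,\unde}) \geq 2\ell(y) - \ell(\undw)$ whenever $\undw^\unde\geq y$.

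For this degree computation I would first verify directly from the definition \eqref{intform} that the intersection form $\langle-,-\rangle_{\BS(\undw)}$ is homogeneous of total degree zero: the grading shifts introduced by the component-wise multiplication on $\BS(\undw)$ and by $\Trace$ (extraction of the top basis element $c_{top}$) cancel. Together with the formula $\deg(ll_{\undw,\unde})=-\ell(\undw^\unde)+\defect(\unde)$ recalled above, the defining duality $\langle ll^*_{\undw,\unde},ll_{\undw,\unde}\rangle = 1$ then forces
\[\deg(ll^*_{\undw,\unde}) \;=\; \ell(\undw^\unde) - \defect(\unde).\]

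The last step is combinatorial. Splitting the entries of $\unde$ according to the four possible labels $U1$, $D1$, $U0$, $D0$, one obtains the identity
\[\ell(\undw)-\ell(\undw^\unde) - \defect(\unde) \;=\; 2\cdot\#\{i : \text{entry }i\text{ of }\unde\text{ is labelled }D\}\;\geq\;0,\]
so $\defect(\unde)\leq \ell(\undw)-\ell(\undw^\unde)$. Substituting and using $\ell(\undw^\unde)\geq\ell(y)$ gives $\deg(ll^*_{\undw,\unde}) \geq 2\ell(\undw^\unde)-\ell(\undw) \geq 2\ell(y)-\ell(\undw)$, as required.

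I do not expect a real obstacle. The only mildly delicate point is tracking the grading shifts to confirm that the intersection form has total degree zero; once this is in place, the bound is immediate from \Cref{llbasis1} and the bookkeeping of the defect statistic.
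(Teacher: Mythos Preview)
Your argument is correct and follows the same route as the paper's proof: both rely on \Cref{llbasis1} for the dual light-leaf basis of $\Gamma_{\geq y}\BS(\undw)$ and on the identity $\deg(ll^*_{\undw,\unde})=-\deg(ll_{\undw,\unde})\geq 2\ell(\undw^\unde)-\ell(\undw)$. You simply unpack the steps the paper leaves implicit---the degree-zero property of the intersection form and the defect identity $\ell(\undw)-\ell(\undw^\unde)-\defect(\unde)=2\#D$---which the paper has already recorded just before \Cref{llbasis1}.
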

	\begin{proof}
		This follows from \Cref{llbasis1} because $\deg(ll^*_{\undw,\unde})=-\deg(ll_{\undw,\unde})\geq 2\ell(\undw^{\unde})-
		\ell(\undw)$. 
	\end{proof}

\subsection{Cohomology}

\begin{definition}
	We denote by $D_{\undw}$ the left $R$-submodule of $\BS(\undw)$ spanned by the non-canonical light leaves. We call $D_{\undw}$ the \emph{defective submodule} of $\BS(\undw)$.
\end{definition}

 \begin{definition}
 	We define the \emph{cohomology submodule} $H_{\undw}\cug \BS(\undw)$ to be the orthogonal complement of $D_{\undw}$ with respect to the intersection form $\langle-,-\rangle_{\BS(\undw)}$.\label{s:tildeH}
 \end{definition}

 We have by definition $D_{\undw}= \bigoplus_{\unde\text{ non canonical}} R ll_{\undw,\unde}$ and $H_{\undw}=\bigoplus_{x\leq \undw} R ll^*_{\undw,\can_x}$.

\begin{lemma}\label{Pwelldef}
	The element $ll_{\undw,\can_x}^*\in \BS(\undw)$ does not depend on the choices made in the construction of the light leaves basis.
\end{lemma}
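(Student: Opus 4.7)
The plan is to characterize $ll^*_{\undw,\can_x}$ intrinsically via its support and degree. By \Cref{llbasis1}, the submodule $\Gamma_{\geq x}\BS(\undw)$---defined through the choice-independent $Q$-decomposition of $\BS(\undw)$---has left $R$-basis $\{ll^*_{\undw,\unde}\}_{\undw^\unde\geq x}$, with $\deg(ll^*_{\undw,\unde})=\ell(\undw^\unde)-\defect(\unde)$. Combining the defect bound $\defect(\unde)\leq \ell(\undw)-\ell(\undw^\unde)$ with the Bruhat length inequality $y\geq x\Raw\ell(y)\geq \ell(x)$, I can show that the only basis element contributing to the graded piece $\Gamma_{\geq x}\BS(\undw)^{2\ell(x)-\ell(\undw)}$ is $ll^*_{\undw,\can_x}$: a nonzero $R$-coefficient requires $\defect(\unde)\geq \ell(\undw)-2\ell(x)+\ell(\undw^\unde)$, which together with the defect bound forces $\undw^\unde=x$ and $\defect(\unde)=\ell(\undw)-\ell(x)$, and the uniqueness of the canonical subexpression from \cite[Prop.~12.20]{EMTWIntroduction} then gives $\unde=\can_x$. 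Since the $R$-coefficient must lie in $R^0=\Bbbk$, this graded piece is a one-dimensional $\Bbbk$-vector space, independent of the choices made in constructing the light leaves.

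This intrinsic characterization pins down $ll^*_{\undw,\can_x}$ up to a scalar in $\Bbbk^*$. Applying the same argument to a second construction, $(ll')^*_{\undw,\can_x}$ lies in the same one-dimensional $\Bbbk$-subspace, so $(ll')^*_{\undw,\can_x}=\mu\cdot ll^*_{\undw,\can_x}$ for some $\mu\in\Bbbk^*$. To prove $\mu=1$, I will show that any two constructions yield light leaves differing by elements of the defective submodule: $(ll')_\undf - ll_\undf\in D_\undw$ for every $\undf\subset\undw$. This implies $D_\undw$ (and hence $H_\undw$) is choice-independent, so in particular $(ll')^*_{\undw,\can_x}\in H_\undw$; combining $\langle H_\undw,D_\undw\rangle=0$ with the dual-basis normalization $\langle (ll')^*_{\undw,\can_x},(ll')_{\undw,\can_x}\rangle=1$, I obtain $\mu\cdot\langle ll^*_{\undw,\can_x},ll_{\undw,\can_x}\rangle=\mu=1$.

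The hard part will be the claim $(ll')_\undf - ll_\undf\in D_\undw$ for all $\undf$, which is a structural statement about the ambiguity in the light leaves construction. My approach would be induction on $\ell(\undw)$, using the recursive structure of light leaves: different choices of reduced expressions at intermediate steps are connected by braid isomorphisms of Bott--Samelson bimodules, and the associated differences land in the defective span built up thus far. Alternatively, one can leverage Soergel's categorification (\Cref{sct}) together with the double leaves basis of $\End^\bullet(\BS(\undw))$ and the triangularity of morphism spaces in $\sbim$: any morphism $\BS(\undw)\to\BS(\undx)$ killing $1^\otimes_\undw$ (as $\LL-\LL'$ does by \Cref{D=0}) factors through indecomposable summands $B_y$ with $y<x$, and this factorization can be translated, via taking adjoints, into the desired containment in $D_\undw$.
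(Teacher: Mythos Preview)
Your first paragraph is correct and clean: the one-dimensionality of $\Gamma_{\geq x}\BS(\undw)^{2\ell(x)-\ell(\undw)}$ over $\Bbbk$ pins down $ll^*_{\undw,\can_x}$ up to scalar, entirely in terms of the choice-free support filtration. The paper uses the same degree bound (\Cref{degll*}) but does not isolate this step as sharply.

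Your reduction to the claim ``$(ll')_\undf - ll_\undf \in D_\undw$ for all $\undf$'' is also valid, and for non-canonical $\undf$ the claim follows from the same support-plus-degree argument you already gave: $(ll')_\undf \in \Gamma_{\leq \undw^\undf}\BS(\undw)$ has degree strictly below $\ell(\undw)-2\ell(\undw^\undf)$, so no canonical $ll_{\can_y}$ can occur in its expansion in the unprimed basis. The real content is the case $\undf=\can_x$, where support-plus-degree alone only gives $(ll')_{\can_x}\equiv c\cdot ll_{\can_x}\pmod{D_\undw}$ for some scalar $c$, and one must still show $c=1$.

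Here your sketched strategies fall short. Strategy (a) is too vague to assess. Strategy (b) contains a false assertion: it is \emph{not} true that every morphism $\BS(\undw)\to\BS(\undx)$ of degree $\ell(\undw)-\ell(x)$ killing $1^\otimes_\undw$ factors through summands $B_y$ with $y<x$. Indeed, for any non-canonical $\unde$ with $\undw^\unde=x$, the light leaf $\LL_{\undw,\unde}$ kills $1^\otimes_\undw$ by \Cref{D=0}, and any $R$-multiple $p\cdot\LL_{\undw,\unde}$ of the appropriate degree still kills $1^\otimes_\undw$ while projecting nontrivially to $B_x$. What \emph{is} true---and what the paper actually does---is to expand $\tilLL_{\undw,\can_x}$ in the double-leaves basis of $\Hom^\bullet(\BS(\undw),\BS(\undx))$, evaluate at $1^\otimes_\undw$ using \Cref{D=0} to force the coefficient of $\LL_{\undw,\can_x}$ to equal $1$, then dualize and evaluate at $1^\otimes_\undx$; the remaining terms land in $D_\undw$ by the degree argument. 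So you have named the right tool (double leaves) but drawn an incorrect intermediate conclusion from it; once the factorization claim is replaced by the direct double-leaves expansion, your argument and the paper's coincide.
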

\begin{proof}
	Suppose we have fixed light leaf morphisms $\LL_{\undw,\unde}$ for any sequence $\unde\subset \undw$ and let $\tilLL_{\undw,\can_x}$ be another light leaf for $\can_x$ constructed using different choices of reduced expressions. Since braid moves fix $1^\otimes_{\undw}$, we can assume that both $\LL_{\undw,\can_x}$ and $\tilLL_{\undw,\can_x}$ are morphisms from $\BS(\undw)$ to the same bimodule $\BS(\undx)$, for some reduced expression $\undx$ of $x$.
	
	Because the set of double light leaves forms a basis of $\Hom^\bullet(\BS(\undw),\BS(\undx))$ we can write
	\begin{equation}\label{tilLL} \tilLL_{\undw,\can_x}= d \LL_{\undw,\can_x} + \sum_{\substack{\undw^\unde=x\\\unde\neq \can_x}} p_\unde \LL_{\undw,\unde}+\sum_{\undw^\unde=\undx^\undf<x} q_{\unde,\undf} \left(\flipLL_{\undx,\undf}\circ\LL_{\undw,\unde}\right)
\end{equation}
with $p_{\unde},q_{\unde,\undf},d \in R$. We can assume that \eqref{tilLL} is homogeneous and, for degree reasons, we must have $d\in \Bbbk$. Evaluating \eqref{tilLL} in $1^\otimes_{\undw}$, by \Cref{D=0}, we have
\[ 1^\otimes_{\undx} = d \cdot 1^\otimes_{\undx} +\sum_{\undx^\undf=y<x} q_{\can_y,\undf}\cdot ll_{\undx,\undf}.
\]
Since $\deg(ll_{\undx,\undf})>\deg(1^\otimes_{\undx})$ if $\undf \neq \can_x$, we conclude that $d=1$. Then, dualizing \eqref{tilLL} and evaluating it in $1^\otimes_{\undw}$, letting $\tilde{\flipLL}_{\undw,\unde}$ be adjoint of $\LL_{\undw,\unde}$ and $\tilll_{\undw,\can_x}=\tilde{\flipLL}_{\undw,\unde}(1^\otimes_\undx)$, we obtain
\begin{equation}\label{ll=ll}\tilll_{\undw,\can_x}=ll_{\undw,\can_x}+\sum_{\substack{\undw^\unde=x\\\unde\neq \can_x}} p_{\unde} ll_{\undw,\unde} + \sum_{\undw^\unde=y<x} q_{\unde,\can_y} ll_{\undw,\unde}.
\end{equation}
The subspaces $\Gamma_{<x}\BS(\undw)= \sum_{\undw^\unde=y< x} R\cdot ll_{\undw,\unde}$ do not depend on the specific choices made in the light leaves construction. Also the space $ \sum_{\undw^\unde=y<x} R\cdot ll_{\undw,\unde}+\Gamma_{<x} \BS(\undw)$ does not depend on the specific choices for degree reasons. Hence, if $\tilde{ll}^*_{\undw,\can_x}$ is the dual element of a basis of light leaves containing $\tilll_{\undw,\can_x}$, we have by \eqref{ll=ll} that
\[ \langle\tilde{ll}^*_{\undw,\can_x},ll_{\undw,\can_x}\rangle=\langle \tilll^*_{\undw,\can_x},\tilll_{\undw,\can_x}\rangle=1.\]
and $\langle\tilde{ll}^*_{\undw,\can_x},ll_{\undw,\unde}\rangle=0$ for any $\unde\neq \can_x$ with $\undw^\unde\not > x$.

Finally, if $\undw^\unde>x$, then $\deg(ll_{\undw,\unde})<\deg(ll_{\undw,\can_x})$ and $\deg\langle \tilde{ll}^*_{\undw,\can_x}, ll_{\undw,\unde}\rangle=-\ell(\undw) -\deg(ll_{\undw,\can_x})+\deg(ll_{\undw,\unde})<-\ell(\undw)$, so we have $\langle \tilde{ll}^*_{\undw,\can_x}, ll_{\undw,\unde}\rangle=0$ for degree reasons. We conclude that $\tilde{ll}^*_{\undw,\can_x}=ll^*_{\undw,\can_x}$.
\end{proof}

\begin{definition}
For $x\leq \undw$,	let $\calP_{\undw,x}:=ll_{\undw,can_x}^*$. The set $\{\calP_{\undw,x}\}$ is a basis of $H_{\undw}$ as a left $R$-module. 
\end{definition}

Because $ll_{\undw,\can_{\eid}}=c_{top}$, it is easy to check that $\calP_{\undw,\eid}=ll^*_{\undw,\can_{\eid}}=1^\otimes_{\undw}$.
We now prove a Pieri formula for $\calP_{\undw,x}$.

%

\begin{lemma}
	For any $x\leq \undw$ and $\lambda\in V$ we have
\begin{equation}\label{Pieri}
	\calP_{\undw,x}\cdot \lambda=x(\lambda)\calP_{\undw,x}+\sum_{\stackrel{x\xto{t}y}{y\leq w}}\partial_t(\lambda)\calP_{\undw,y}.
\end{equation}
\end{lemma}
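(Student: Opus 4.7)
The approach is to show that the difference
\[ \phi \;:=\; \calP_{\undw,x}\cdot\lambda - x(\lambda)\,\calP_{\undw,x} - \sum_{\substack{x\xto{t}y \\ y\leq w}} \partial_t(\lambda)\,\calP_{\undw,y} \]
vanishes, by combining a support constraint (it sits in $\Gamma_{>x}\BS(\undw)$), an orthogonality constraint (it sits in $H_{\undw}$), and a degree count, and then pinning down the remaining possible terms via a component-wise calculation.

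First I would observe that each summand of $\phi$ sits in $\Gamma_{\geq x}\BS(\undw)$: by \Cref{llbasis1}, $\calP_{\undw,x}=ll^*_{\undw,\can_x}\in\Gamma_{\geq x}\BS(\undw)$, and each $\calP_{\undw,y}$ with $x\xto{t}y$ lies in $\Gamma_{\geq y}\BS(\undw)\subseteq\Gamma_{>x}\BS(\undw)$. Since the right $R$-action on $\BS(\undw)_Q^z$ is multiplication by the scalar $z(\lambda)$, the $x$-components of $\calP_{\undw,x}\cdot\lambda$ and $x(\lambda)\calP_{\undw,x}$ coincide, so $\phi\in\Gamma_{>x}\BS(\undw)$.

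Next I would show $\phi\in H_{\undw}$. The cleanest route is to prove that $H_{\undw}$ is stable under the right $R$-action, equivalently that $D_{\undw}$ is stable under right multiplication by $R$. By left-invariance of the intersection form,
\[\langle \calP_{\undw,x}\cdot\lambda,\,ll_{\undw,\unde}\rangle = \langle \calP_{\undw,x},\,ll_{\undw,\unde}\cdot\lambda\rangle,\]
so orthogonality of $\phi$ to every non-canonical $ll_{\undw,\unde}$ reduces to checking that the expansion of $ll_{\undw,\unde}\cdot\lambda$ in the light leaves basis produces no canonical term; this is a direct combinatorial analysis tracking the defect statistic. Combining with the first step,
\[\phi\in H_{\undw}\cap\Gamma_{>x}\BS(\undw) = \bigoplus_{z>x,\,z\leq w} R\cdot\calP_{\undw,z}.\]

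Now write $\phi = \sum_{z>x,\,z\leq w} c_z\calP_{\undw,z}$. Since $\deg(\phi) = 2\ell(x)+2-\ell(\undw)$ and $\deg(\calP_{\undw,z}) = 2\ell(z)-\ell(\undw)$, we have $\deg(c_z) = 2(\ell(x)+1-\ell(z))\geq 0$, which forces $\ell(z)=\ell(x)+1$; combined with $z>x$, this means $z=xt$ for some reflection $t$, and $c_z\in\Bbbk$. To compute each $c_y$, project onto $\BS(\undw)_Q^y$: since distinct Bruhat covers of $x$ are incomparable, only $c_y\calP_{\undw,y}$ contributes to the $y$-component, giving $(\phi)_y = c_y(\calP_{\undw,y})_y$. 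On the other hand, computing $(\phi)_y$ directly via $(b\cdot f)_z = z(f)b_z$ together with the identity $y(\lambda)-x(\lambda) = -\partial_t(\lambda)\,x(\alpha_t)$ (for $y=xt$) reduces the problem to the local identity $x(\alpha_t)(\calP_{\undw,x})_y = -(\calP_{\undw,y})_y$, which is the $\BS(\undw)$-analogue of the identity in the structure algebra $x(\alpha_t)d_{x,y}=-p_y$ extracted from \Cref{dxy} and the Pieri formula~\eqref{PieriZ}. This in turn can be checked from the light-leaves characterization of $\calP_{\undw,x}$ and the structure of the pairings on $\BS(\undw)_Q^y$, yielding $c_y=0$ and hence $\phi=0$.

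The main obstacle is the second step: establishing that $\phi$ belongs to $H_{\undw}$. Without invoking the later identification $H_{\undw}=Z\cdot 1^\otimes_\undw$ (\Cref{Z1=H}), one must verify directly that $D_{\undw}$ is stable under right $R$-multiplication, a delicate combinatorial statement about how right multiplication by $\lambda$ interacts with light leaves and the defect statistic.
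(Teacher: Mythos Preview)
Your overall architecture (support + orthogonality + degree) is reasonable, but the proof has two substantive gaps, and in filling them you end up needing the very computation the paper performs directly.

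For step~2, you correctly identify that the crux is the right-$R$-stability of $D_{\undw}$. This can in fact be proved by a pure support-and-degree argument: for non-canonical $\undg$ with $\undw^{\undg}=z$, one has $ll_{\undw,\undg}\cdot\lambda \equiv z(\lambda)\,ll_{\undw,\undg}\pmod{\Gamma_{<z}\BS(\undw)}$ (since right multiplication by $\lambda$ on $\BS(\undw)_Q^z$ is left multiplication by $z(\lambda)$), so no $ll_{\undw,\can_z}$ term appears; and any $ll_{\undw,\can_v}$ with $v<z$ would carry a coefficient of degree $2(\ell(v)-\ell(z))+2-2\#D(\undg)<0$, hence zero. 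So this gap is closable without the explicit string-basis computation, but you should actually carry it out rather than flag it as an obstacle.

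Step~4 is a genuine problem. The ``local identity'' $x(\alpha_t)(\calP_{\undw,x})_y=-(\calP_{\undw,y})_y$ is exactly what the Pieri formula says at the $y$-component; appealing to the structure-algebra identity $x(\alpha_t)d_{x,y}=-p_y$ does not help, because transporting that identity to $\BS(\undw)$ requires \Cref{Z1=H}, whose proof already uses \eqref{Pieri}. Nor does the light-leaves characterisation of $\calP_{\undw,x}$ by itself pin down the scalar relating $(\calP_{\undw,x})_y$ to $(\calP_{\undw,y})_y$ inside $\BS(\undw)_Q^y$ without a concrete calculation. The paper sidesteps this entirely by working on the primal side: it realises $ll_{\undw,\can_x}=c_{-\can_x}$ in the string basis, pushes $\lambda$ through using the nil-Hecke relation $c_{\eid}f=s(f)c_{\eid}-\partial_s(f)c_s$ repeatedly to obtain
\[ ll_{\undw,\can_x}\cdot\lambda = x(\lambda)\,ll_{\undw,\can_x}+\sum_{y\xto{t}x}\partial_t(\lambda)\,ll_{\undw,\can_y}+\theta,\qquad \theta\in D_{\undw}, \]
and then pairs with $\calP_{\undw,z}$ (using $\langle b\lambda,b'\rangle=\langle b,b'\lambda\rangle$) to read off \eqref{Pieri}. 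This single explicit computation supplies precisely the scalar you are missing in step~4; your route ultimately requires it anyway, so it is more efficient to do it first as the paper does.
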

\begin{proof}
	Since we have shown in \Cref{Pwelldef} that $\calP_{\undw,x}$ does not depend on the choices involved in the construction of the light leaves basis, we may choose $\LL_{\undw,\can_x}$ to be the light leaf morphism constructed using only trivial braid moves.
	
	Let $\undw=s_1\ldots s_l$ and let $(\eps_1,\ldots,\eps_l):=\can_x\in \{0,1\}^l$. Then $\LL_{\undw,\can_x}$ is a morphism from $\BS(\undw)$ to $\BS(\undx)$,
	where $\undx=t_1t_2\ldots t_k$ is the reduced expression of $x$ obtained by removing from $\undw$ all the $s_i$ such that the $\eps_i=0$.
	In particular, we have 
	\[\LL_{\undw,\can_x}= \psi_{s_1}^{\eps_1}\otimes \cdots \otimes \psi_{s_l}^{\eps_l},\]
	where 
	\[ \psi_{s_i}^{\eps_i}:=\begin{cases} \Iden_{B_{s_i}}&\text{if }\can_x(i)=1\\\ m_{s_i}&\text{if }\can_x(i)=0\end{cases}\]
	and $m_{s_i}:B_{s_i}\to R$ is the morphism defined by $f\otimes g\mapsto fg$. For $\unde\in \{0,1\}^l$ let $-\unde$ be the sequence obtained by inverting its $0$'s and $1$'s. It follows that $ll_{\undw,\can_{x}}=c_{-\can_{x}}$. 
	
	We recall the nil-Hecke relation in $B_s$. For any $f\in R$ we have
	
	\begin{equation}\label{nilheckerel} c_{id} f=s(f)c_{id}-\partial_s(f) c_s\in B_s\end{equation}

	For $1\leq i \leq \ell(x)$ let $x_i=t_{i} t_{i+1}\ldots t_k$ and $x_{\ell(x)+1}=\eid$. For $\unde\in \{0,1\}^l$, we denote by $\ehat$ the sequence obtained by replacing the $i$-th occurrence of $1$ in $\unde$ with a $0$.

	Let $\lambda\in V$. 
	Using the nil-Hecke relation \eqref{nilheckerel} repeatedly, we get
	$$ ll_{\undw,\can_x}\cdot \lambda=c_{-\can_x}\cdot \lambda=x(\lambda)c_{-\can_x}+\sum_{i=1}^{\ell(x)}\partial_{t_i}(x_{i+1}(\lambda))c_{-\can_x(\hat{\imath})}$$
	
	If $\can_x(\hat{\imath})$ is canonical, i.e., if it is decorated only with $U$'s, then $c_{-\can_x(\hat{\imath})}=ll_{\undw,\can_y}$ for some $y<x$ such that $y\xto{t} x$ with $t=x_{i+1}^{-1}t_ix_{i+1}\in T$ and $\partial_{t_i}(x_{i+1}(\lambda))=\partial_t(\lambda)$.

	If $\can_x(\hat{\imath})$ is not canonical, then $c_{-\can_x(\hat{\imath})}$ is in the image of a morphism $\BS(\undw^{\can_x(\hat{\imath})})\to\BS(\undw)$ and, as such, we have $c_{-\can_x(\hat{\imath})}\in \Gamma_{\leq \undw^{\can_x(\hat{\imath})}}\BS(\undw)$, with $\ell(\undw^{\can_x(\hat{\imath})})\leq \ell(x)-2$. Thus we can write
	\begin{equation}\label{canPieri}ll_{\undw,\can_x}\cdot \lambda =x(\lambda) ll_{\undw,\can_x}+\sum _{y\xto{t} x} \partial_t(\lambda)ll_{\undw,\can_y} + \theta,
\end{equation}	
	with $\theta\in \Gamma_{\{y \mid \ell(y)\leq \ell(x) -2\}}\BS(\undw)$. Furthermore, by \Cref{llbasis1} we have $\theta= \sum_j h_j ll_{\undw,\underline{f_j}}$ with $h_j\in R$ and $\ell(\undw^{\underline{f_j}})\leq \ell(x)-2$. 
	The element $\theta$ is homogeneous of $\deg(\theta)=\deg(ll_{\undw,\undx})+2$. 
	This forces the degree of $ll_{\undw,\underline{f_j}}$ to be too small for $\underline{f_j}$ to be canonical, in fact we have
	$$\deg ll_{\undx,f_j}\leq \deg ll_{\undw,\can_x}+2=\ell(w)-2\ell(x)+2<\ell(w)-2\ell(\undw^{f_j}),$$
	whence $\theta\in D_{\undw}$.
	
	By duality, using $\langle \calP_{\undw,x},\theta\rangle= 0$ for any $z$,	
	from \eqref{canPieri} we obtain the desired Pieri formula for the multiplication in the basis $\{\calP_{\undw,x}\}$ of $H_{\undw}$.
\end{proof}

Recall that by \Cref{equivZSBim}, there is a natural $Z$-action on $\BS(\undw)$ extending the structure of an $R$-bimodule.

\begin{prop}\label{Z1=H}
	Let $\pi:Z\to \BS(\undw)$ be the morphism of $Z$-modules defined by $\pi(z)=z\cdot 1^\otimes$. Then $\pi$ sends $\calP_x$ to $\calP_{\undw,x}$. In particular, the image of $\pi$ is the submodule $H_\undw$.
\end{prop}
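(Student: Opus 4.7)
The plan is to prove by induction on $\ell(x)$ the sharper statement that $\pi(\calP_x) = \calP_{\undw,x}$ when $x \leq \undw$ and $\pi(\calP_x) = 0$ otherwise; this immediately gives $\ima(\pi) = \spa_R\{\calP_{\undw,x}\}_{x \leq \undw} = H_\undw$.

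First I would record the support of $\pi(\calP_y)$. Using the decomposition $\BS(\undw)_Q = \bigoplus_{x \leq \undw} \BS(\undw)_Q^x$ together with the fact that $z \in Z$ acts on $\BS(\undw)_Q^x$ as scalar multiplication by $z_x$, decomposing $1^\otimes = \sum_{x \leq \undw} 1^\otimes_x$ and using $(\calP_y)_x = d_{y,x}$ yields
\[\pi(\calP_y) \;=\; \sum_{x \leq \undw} d_{y,x}\cdot 1^\otimes_x \;=\; \sum_{\substack{x \geq y \\ x \leq \undw}} d_{y,x}\cdot 1^\otimes_x,\]
the second equality by \Cref{dxy}(1). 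Hence $\pi(\calP_y) \in \Gamma_{\geq y}\BS(\undw)$, and $\pi(\calP_y) = 0$ whenever $y \not\leq \undw$.

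Next, a degree argument pins $\pi(\calP_y)$ down up to a scalar for $y \leq \undw$. The element $\pi(\calP_y)$ is homogeneous of degree $2\ell(y) - \ell(\undw)$ and lies in $\Gamma_{\geq y}\BS(\undw)$, which by \Cref{llbasis1} has left $R$-basis $\{ll^*_{\undw,\unde}\}_{\undw^\unde \geq y}$. Combining $\deg(ll^*_{\undw,\unde}) = \ell(\undw^\unde) - \defect(\unde)$ with $\defect(\unde) \leq \ell(\undw) - \ell(\undw^\unde)$ (equality iff $\unde = \can_{\undw^\unde}$), the equality $\deg(ll^*_{\undw,\unde}) = 2\ell(y) - \ell(\undw)$ forces $\undw^\unde = y$ and $\unde = \can_y$, while any other $\unde$ with $\undw^\unde \geq y$ has $\deg(ll^*_{\undw,\unde}) > 2\ell(y) - \ell(\undw)$. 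Expanding $\pi(\calP_y)$ in this basis, the coefficient of $ll^*_{\undw,\unde}$ must have degree $2\ell(y) - \ell(\undw) - \deg(ll^*_{\undw,\unde}) \leq 0$, so all coefficients vanish except that of $ll^*_{\undw,\can_y} = \calP_{\undw,y}$, which is a scalar. Hence $\pi(\calP_y) = c_y\,\calP_{\undw,y}$ for some $c_y \in \Bbbk$.

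Finally I would show $c_y = 1$ by induction on $\ell(y)$. For the base case $y = e$, the congruence in \Cref{dxy}(3) applied to the degree-zero entry forces $d_{e,\cdot} \equiv 1$, so $\calP_e$ is the unit of $Z$ and $\pi(\calP_e) = 1^\otimes = \calP_{\undw,e}$. For the inductive step at $z \leq \undw$ with $\ell(z) \geq 1$, choose $y \lessdot z$, so $z = yt$ for a unique reflection $t$ and $y \leq \undw$ by the subword property. Applying $\pi$ to the Pieri relation \eqref{PieriZ} for $\calP_y\cdot\lambda$, discarding terms $\pi(\calP_v) = 0$ with $v \not\leq \undw$ by the first step, and substituting $\pi(\calP_y) = \calP_{\undw,y}$ by the inductive hypothesis, then subtracting \eqref{Pieri} in $\BS(\undw)$, gives
\[\sum_{\substack{y \xto{s} v \\ v \leq \undw}} (c_v - 1)\,\partial_s(\lambda)\,\calP_{\undw,v} \;=\; 0.\]
Since the $\calP_{\undw,v}$ are part of a left $R$-basis of $H_\undw$ and $\partial_s(\lambda) = \alpha_s^\vee(\lambda) \in \Bbbk$, choosing $\lambda \in V$ with $\alpha_t^\vee(\lambda) \neq 0$ isolates the $v = z$ term and forces $c_z = 1$. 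The main delicate point is the degree bookkeeping in the second step; once that is in place, the rest collapses to matching two parallel Pieri recursions.
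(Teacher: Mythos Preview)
Your proof is correct and follows essentially the same approach as the paper: show $\pi(\calP_y)\in\Gamma_{\geq y}\BS(\undw)$, use degree reasons (as in \Cref{degll*}) to get $\pi(\calP_y)=c_y\,\calP_{\undw,y}$, then compare the two Pieri recursions \eqref{PieriZ} and \eqref{Pieri} inductively to force $c_y=1$. The only cosmetic difference is that the paper chooses the cover $y=xs$ with $s\in S$ simple (so that Demazure surjectivity, i.e.\ $\alpha_s^\vee\neq 0$, applies directly), whereas you use an arbitrary cover $y\lessdot z=yt$; note that $\alpha_t^\vee$ is not defined in the paper for non-simple $t$, but your needed fact that $\partial_t(\lambda)\neq 0$ for some $\lambda$ still holds since any reflection acts nontrivially on $V$.
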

\begin{proof}
	We have $\pi(\calP_x)\in \Gamma_{\geq x} \BS(\undw)$ and for degree reasons (cf. \Cref{degll*}) we have that $\pi(\calP_x)$ is a scalar multiple of $ll^*_{\undw,\can_x}$. We show that $\pi(\calP_x)= ll^*_{\undw,\can_x}$ by induction on $\ell(x)$.
	In fact, the claim is clear if $\ell(x)=0$ since they both coincide with $1^\otimes_{\undw}$. 
	
	Assume we know the claim for all $z<x$. Let $s\in S$ such that $xs<x$ and let $y=xs$, so that $y\xto{s} x$.	Then by \eqref{PieriZ} and \eqref{Pieri}
	We have for any $\lambda\in V$ that
	\begin{equation}\label{P=P} 0=(\pi(\calP_y)-\calP_{\undw,y})\cdot \lambda - y(\lambda)\cdot (\pi(\calP_y)-\calP_{\undw,y})=\sum_{y\xto{t} z}\partial_t(\lambda)(\pi(\calP_z)-\calP_{\undw,z}). \end{equation}
	Now, by linear independence all terms in the RHS of \eqref{P=P} must vanish. By Demazure surjectivity, we can always find $\lambda \in V$ such that $\partial_s(\lambda)\neq 0$ and we conclude that $\pi(\calP_x)=\calP_{\undw,x}$.
\end{proof}

For $w\in W$ let $Z_w=Z/I_w$, where $I_w$ is the ideal of $Z$ generated by $\{\calP_y\mid y\not \leq x\}$. 
\begin{prop}
	The quotient $Z_w$ is free as an $R$-module with basis given by the projection of $\{\calP_z\}_{z\leq w}$.
	
	Assume that $\undw$ is a reduced expression for $w$. Then $H_\undw$ is a $Z$-submodule contained in the indecomposable summand $B_w$ of $\BS(\undw)$ and is isomorphic to $Z_w$
\end{prop}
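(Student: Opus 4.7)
The plan is to split the statement in two. For the freeness of $Z_w$, I want to show $I_w$ equals the $R$-span of $\{\calP_y : y \not\leq w\}$; freeness of $Z_w$ with the stated basis will then follow since $\{\calP_x\}_{x \in W}$ is already an $R$-basis of $Z$. One inclusion is tautological; for the other, the crucial point is that multiplication in $Z \subseteq \prod_{v \in W} R$ is componentwise, so $(\calP_x \calP_y)_v = d_{x,v}\, d_{y,v}$ vanishes unless both $v \geq x$ and $v \geq y$. In particular $\calP_x \calP_y$ is supported on $\{v : v \geq y\}$, which is contained in $\{v : v \not\leq w\}$ whenever $y \not\leq w$.

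To convert this support statement into an expansion in the $\calP$-basis I would prove the following triangularity principle: if $f \in Z$ is supported on an upwardly closed set $U \subseteq W$, then $f$ lies in the $R$-span of $\{\calP_z : z \in U\}$. Writing $f = \sum_z c_z \calP_z$, suppose $c_z \neq 0$ for some $z \notin U$ and take such $z$ of minimal length; since $U^c$ is downwardly closed, every $z' < z$ also lies outside $U$, so by minimality $c_{z'} = 0$ for all $z' < z$. Then $f_z = \sum_{z' \leq z} c_{z'}\, d_{z',z} = c_z\, p_z$, which must vanish (since $z \notin U$ and $f$ is supported on $U$), forcing $c_z = 0$ by \Cref{dxy} and giving a contradiction. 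Applied to the upwardly closed set $U = \{v : v \not\leq w\}$, this yields the desired description of $I_w$.

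For the second part I would consider $\pi : Z \to \BS(\undw)$, $z \mapsto z \cdot 1^\otimes$, whose image is $H_\undw$ by \Cref{Z1=H}. Via the equivalence of \Cref{equivZSBim}, the $Z$-action on $\BS(\undw)_Q^v$ is by the character $z \mapsto z_v$, and when $\undw$ is reduced one has $\BS(\undw)_Q = \bigoplus_{v \leq w} \BS(\undw)_Q^v$; for $y \not\leq w$ we then have $(\calP_y)_v = d_{y,v} = 0$ for every $v \leq w$, so $\calP_y$ annihilates $\BS(\undw)$. Thus $I_w \subseteq \ker \pi$, and $\pi$ descends to a surjection $\bar\pi : Z_w \twoheadrightarrow H_\undw$ which by \Cref{Z1=H} sends the basis $\{\calP_z : z \leq w\}$ to the basis $\{\calP_{\undw,z} : z \leq w\}$, and is therefore an isomorphism. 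Finally, since the equivalence \Cref{equivZSBim} transports the direct summand decomposition of $\BS(\undw)$ to a decomposition in $Z$-mod${}^S$, the indecomposable summand $B_w$ is a $Z$-submodule of $\BS(\undw)$, and it contains $H_\undw = Z \cdot 1^\otimes$ because it contains $1^\otimes$.

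The main obstacle is really the triangularity lemma in the second paragraph; once that is stated and verified, the remainder of the proof is a direct assembly of results already in place (componentwise multiplication in $Z$, the support computation of $\calP_y$, and \Cref{Z1=H} together with \Cref{equivZSBim}).
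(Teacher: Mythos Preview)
Your proof is correct. For the second part it matches the paper's argument essentially verbatim: both use \Cref{Z1=H} to identify the image of $\pi$, show $\calP_y$ with $y\not\leq w$ acts by zero on $\BS(\undw)$ (the paper phrases this as $\pi(\calP_y)\in\Gamma_{\geq y}B_w=0$, you phrase it via the character $z\mapsto z_v$ on $\BS(\undw)_Q^v$), and conclude that $\pi$ factors through an isomorphism $Z_w\to H_{\undw}$ landing in $B_w$.

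For the first part the routes diverge slightly. The paper invokes \cite[Theorem 5.7]{RZNil}, which gives the structure constants $\xi^u\xi^v=\sum p_{u,v}^z\xi^z$ with $p_{u,v}^z=0$ unless $u\leq z$; this immediately shows $\bigoplus_{x\not\leq w}R\calP_x$ is an ideal, hence equals $I_w$. You instead bypass this external input by arguing via supports: $\calP_x\calP_y$ is supported on $\{v\geq y\}\subset\{v\not\leq w\}$ when $y\not\leq w$, and then your triangularity lemma converts ``supported on the upwardly closed set $U$'' into ``lies in $\spa_R\{\calP_z:z\in U\}$''. Your triangularity argument is correct (the minimal-length choice works because $U^c$ is downwardly closed, so any $z'<z$ with $z\notin U$ also lies outside $U$, and the evaluation at $z$ collapses to $c_zp_z$). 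This is a genuinely more self-contained argument: it uses only \Cref{dxy} and the componentwise nature of multiplication in $Z$, whereas the paper's route is shorter but depends on the cited structure-constant computation in the dual nil-Hecke ring.
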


\begin{proof}
	From \cite[Theorem 5.7]{RZNil}, in the dual nil-Hecke ring $\Lambda$ we have $\xi^u\cdot \xi^v= \sum p_{u,v}^z\xi^z$ with $p_{u,v}^z=0$ unless $u\leq z$. So the submodule $\bigoplus_{x\not \leq w} R \xi^x$ is an ideal of $\Lambda$. By \Cref{equivZSBim}, the same is true for the ideal $\bigoplus_{x\not \leq w} R \calP^x$ of $Z$, which therefore coincides with $I_w$. It is now clear that $\bigoplus_{x\leq w} R \calP^x$ maps isomorphically to $Z_w$, and $\{\calP^x\}_{x\leq w}$ descends to a basis of $Z_w$. 
	
	The image $\pi(Z)$ is contained in $B_w$ because $B_w$ is a $Z$-module which contains $1^\otimes_\undw$ by \Cref{sct}. If $x\not \leq w$, then $\pi(\calP_x)\in \Gamma_{\geq x}B_w=0$, so $\pi$ factors through a map $\pi:Z_w\to B_w$, which is injective because $\{\calP_{\undw,x}\}_{x\leq \undw}$ is linearly independent over $R$.
\end{proof}


\begin{remark}\label{ecWeyl}
	Assume that $W$ is the Weyl group of a reductive group $G$. Let $B\subset G$ be a Borel subgroup and let $T\subset B$ be a maximal torus. For $w\in W$ let $X_w$ denote the corrisponding Schubert variety in $X:=G/B$. Then $B_w\cong \Coh^\bullet_T(X_w,\Bbbk)$ and the element $\calP_x$ can be described geometrically as the fundamental class of the Schubert variety. 
	Moreover, in this case have an isomorphism $Z\cong H^\bullet_T(X,\Bbbk)$ and the  basis $\{\calP_x\}$ corresponds to the basis given by the fundamental classes of Schubert cycles in cohomology.
\end{remark}

\begin{remark}
Similar arguments can be developed for one-sided singular Soergel bimodules. Let \( I \subset S \) and denote by \( W_I \) the subgroup generated by \( I \), and assume that \( W_I \) is finite. Under certain stricter assumptions on the realization (see \cite{WilSingular,AbeSingular}), for each \( w\in W/W_I \) one may define an indecomposable Soergel bimodule \( B_w^I \) as the unique submodule that appears as the direct summand containing \(1^\otimes_{\underline{w}}\) in any decomposition of the Bott--Samelson module $\BS(\undw)$ into indecomposable $(R,R^{W_I})$-submodules, where $\underline{w}$ is a reduced expression for the shortest element in the coset $W/W_I$. Here $R^{W_I}$ and $Z^{W_I}$ denote the $W_I$-invariants under the $W_I$-action.

Analogous to the ordinary case, it can be shown that \( B_w^I \) naturally carries the structure of a $Z^{W_I}$-module that extends its $(R,R^{W_I})$-bimodule structure. So one can define $H_w^I$ to be the cyclic $Z^{W_I}$-submodule generated by $1^\otimes_{\underline{w}}$. Using singular light leaves (see \cite{EKLPSingular}), it can be further shown that this module possesses a distinguished basis which has a similar description in terms of singular light leaves.
\end{remark}

		\section{Translation functors on \texorpdfstring{$\bar{Z}$}{Z}-mod}
		
		For $s\in S$, consider the subring $Z^s\cug Z$. It has a basis $\{\calP_v\}_{vs>v}$ as an $R$-module (cf. \Cref{Zsbasis}).
		Moreover, the ring $Z$ is a free $Z^s$-module with basis $\{1,\tau_s\}$, where $\tau_s:=(w(\varpi_s))_{w\in W}$.

		Let $R_+Z$ be right ideal of $Z$ generated by $R_+$, that is $R_+Z=\sum_{x\in W} R_+\calP_x $. 
		We define 
		\begin{equation}\label{s:barZ}
			\bar{Z}=Z/R_+Z=R/R_+\otimes_R Z\cong \Bbbk\otimes_R Z.
		\end{equation}
		Let $P_x:=1\otimes\calP_x\in \bar{Z}$.
		Then $\{P_x\}_{x\in W}$ is a basis of $\bar{Z}$ over $\Bbbk$.
		
		Let $\bar{Z}^s:=\Bbbk \otimes_R Z^s$. 
		Then $\bar{Z}$ is also a free $\bar{Z}^s$-module with basis $\{1,\bar{\tau_s}\}$, where $\bar{\tau_s}:=1\otimes \tau_s\in \bar{Z}$.

		\begin{prop}[{cf. \cite[Proposition 5.2]{FieCombinatorics}}]\label{adjoint}
 The two functors $\bar{Z}^s$-mod$\raw \bar{Z}$-mod defined by
				$$M\mapsto M\otimes_{\bar{Z}^s}\bar{Z}(2)\quad\text{ and }\quad M\mapsto\Hom^{\bullet}_{\bar{Z}^s}(\bar{Z},M)$$ are equivalent.
				
	The functor $\bar{Z}$-mod$\raw \bar{Z}$-mod given by 
				$$M\mapsto M\otimes_{ \bar{Z}^s} \bar{Z}[1]$$
				is self-adjoint.
		\end{prop}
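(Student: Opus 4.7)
The strategy is to recognize that \(\bar{Z}^s\subset\bar{Z}\) is a graded Frobenius extension, after which both statements follow from standard reciprocity. The essential input is \Cref{Zsbasis}: applying \(\Bbbk\otimes_R(-)\) to the decomposition \(Z=Z^s\oplus Z^s\tau_s\) shows that \(\bar{Z}\) is free of rank two over \(\bar{Z}^s\) with homogeneous basis \(\{1,\bar{\tau_s}\}\) in degrees \(0\) and \(2\). Since \(\bar{Z}\) is commutative the bimodule bookkeeping below is unambiguous.

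The first move is to introduce the trace \(\partial_s\colon\bar{Z}\to\bar{Z}^s\) of degree \(-2\), defined on the basis by \(\partial_s(1)=0\) and \(\partial_s(\bar{\tau_s})=1\) and extended \(\bar{Z}^s\)-linearly. With \(\partial_s\) in hand one forms the morphism
\[
\varphi\colon \bar{Z}(2)\longrightarrow \Hom^\bullet_{\bar{Z}^s}(\bar{Z},\bar{Z}^s),\qquad \varphi(z)(z'):=\partial_s(zz'),
\]
which is a homomorphism of \((\bar{Z},\bar{Z}^s)\)-bimodules of degree zero. To show that \(\varphi\) is an isomorphism, I expand \(\bar{\tau_s}^2=a+b\bar{\tau_s}\) with \(a,b\in\bar{Z}^s\) and let \(\{1^{*},\bar{\tau_s}^{*}\}\) be the \(\bar{Z}^s\)-basis of \(\Hom^\bullet_{\bar{Z}^s}(\bar{Z},\bar{Z}^s)\) dual to \(\{1,\bar{\tau_s}\}\). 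A direct check gives \(\varphi(1)=\bar{\tau_s}^{*}\) and \(\varphi(\bar{\tau_s})=1^{*}+b\,\bar{\tau_s}^{*}\), so the matrix of \(\varphi\) in these bases is
\[
\begin{pmatrix} 0 & 1\\ 1 & b \end{pmatrix},
\]
whose determinant is \(-1\in \Bbbk^{*}\). Hence \(\varphi\) is invertible.

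Applying \(-\otimes_{\bar{Z}^s} M\) to \(\varphi\), and invoking that the finitely generated projective \(\bar{Z}^s\)-module \(\bar{Z}\) makes the canonical map \(\Hom^\bullet_{\bar{Z}^s}(\bar{Z},\bar{Z}^s)\otimes_{\bar{Z}^s} M\to \Hom^\bullet_{\bar{Z}^s}(\bar{Z},M)\) an isomorphism, one obtains the natural isomorphism \(M\otimes_{\bar{Z}^s}\bar{Z}(2)\cong \Hom^\bullet_{\bar{Z}^s}(\bar{Z},M)\), which is the first claim. The self-adjointness of \(T_s(M):=M\otimes_{\bar{Z}^s}\bar{Z}[1]\) is then a formal consequence: the induction/restriction adjunction yields \(\Hom^\bullet_{\bar{Z}}(T_s M,N)\cong \Hom^\bullet_{\bar{Z}^s}(M,N)[-1]\), while combining the restriction/coinduction adjunction with the first part (which rewrites \(N\otimes_{\bar{Z}^s}\bar{Z}[1]\) as a shift of \(\Hom^\bullet_{\bar{Z}^s}(\bar{Z},N)\)) gives \(\Hom^\bullet_{\bar{Z}}(M,T_s N)\cong \Hom^\bullet_{\bar{Z}^s}(M,N)[-1]\) as well.

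The only point that requires any real calculation is the invertibility of \(\varphi\), which reduces to the unit-determinant check above, so there is no genuine obstacle. All the substantive content of the proposition is already packaged in the rank-two freeness from \Cref{Zsbasis}; everything else is graded Frobenius reciprocity.
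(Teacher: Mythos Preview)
Your argument is correct and matches the paper's approach: both exploit the rank-two freeness of $\bar{Z}$ over $\bar{Z}^s$ from \Cref{Zsbasis} to identify $\bar{Z}(2)\cong\Hom^\bullet_{\bar{Z}^s}(\bar{Z},\bar{Z}^s)$, tensor with $M$, and then deduce self-adjointness from the standard induction/coinduction adjunctions. The paper writes the isomorphism directly on the basis as $1\mapsto\bar{\tau_s}^{*}$, $\bar{\tau_s}\mapsto 1^{*}$, whereas you package it via the Frobenius trace $\partial_s$; your version has the small advantage that $\bar{Z}$-linearity of $\varphi$ is automatic from $\varphi(z)(z')=\partial_s(zz')$ and commutativity of $\bar{Z}$, so the cross-term $b\,\bar{\tau_s}^{*}$ (coming from $\bar{\tau_s}^2=a+b\bar{\tau_s}$) is accounted for without a separate verification.
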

		
		\begin{proof}
			Let $\{1^*,\bar{\tau_s}^*\}$ be the basis of $\Hom^{\bullet}_{\bar{Z}^s}(\bar{Z},\bar{Z}^s)$ dual to $\{1,\bar{\tau_s}\}$. Since $\deg 1=\deg 1^*=0$ and $\deg \bar{\tau_s}^*=-\deg \bar{\tau_s}=-2$ we have that the map of $\bar{Z}^s$-modules 
			$\Psi:\bar{Z}(2)\raw \Hom^{\bullet}_{\bar{Z}^s}(\bar{Z},\bar{Z}^s)$ defined by $1\mapsto \bar{\tau_s}^*$ and $\bar{\tau_s}\mapsto 1^*$ is an isomorphism.
			Because $\bar{Z}$ is free as a $\bar{Z}^s$-module, for any $\bar{Z}^s$-module $M$ we have a natural isomorphism of $\bar{Z}$-modules:
			\begin{center} 
				\begin{tikzpicture}
					\matrix(m)[matrix of math nodes, column sep=25pt, row sep=10pt]
					{\Hom^{\bullet}_{\bar{Z}^s}(\bar{Z},M) & M\otimes_{\bar{Z}^s}\Hom^{\bullet}_{\bar{Z}^s}(\bar{Z},\bar{Z}^s) & M\otimes_{\bar{Z}^s}\bar{Z}(2)\\
						\phi \quad & \quad \phi(1)\otimes 1^*+\phi(\bar{\tau_s})\otimes \bar{\tau_s}^*& \phi(1)\otimes \bar{\tau_s}+\phi(\bar{\tau_s})\otimes 1.\\};
					\path[-stealth] (m-1-1) edge node[above]{$\sim$} (m-1-2)
					(m-1-2) edge node[above] {$\Psi^{-1}$}(m-1-3);
					\path[|->]	(m-2-1) edge (m-2-2)
					(m-2-2) edge (m-2-3);
				\end{tikzpicture}
			\end{center}

			The second statement now follows since the restriction functor $\bar{Z}$-mod$\raw \bar{Z}^s$-mod is right adjoint to $-\otimes_{\bar{Z}^s}\bar{Z}$ and left adjoint to $\Hom^{\bullet}_{\bar{Z}^s}(\bar{Z},-)$. 
		\end{proof}
		
				For a Soergel bimodule $B$ we define $\bar{B}=\Bbbk\otimes_R B$. This is in a natural way a graded right $R$-module. All graded right $R$-modules arising this way are called \emph{Soergel modules}.
		Since $B$ is a $Z$-module, then $\bar{B}$ is also a a module over $\bar{Z}=Z/R_+Z$. 
		
		The following proof is based on unpublished notes by Soergel, in which he considers the case of finite Coxeter groups (Soergel's proof also appears in \cite[Proposition 1.10]{RicLa}).
		
		\begin{theorem}[Hom formula for Soergel modules]\label{homformula}
			Let $B,B'\in \sbim$. Then we have an isomorphism of graded right $R$-modules
			$$\Theta:\Bbbk\otimes_R\Hom^\bullet_{R\otimes R}(B',B)\xrightarrow{\sim}\Hom^\bullet_{\bar{Z}}(\Bbbk\otimes_R B',\Bbbk \otimes_R B).$$ 
			defined by $\Theta(z\otimes \phi)(z'\otimes b)=zz'\otimes \phi(b)$ for $z,z'\in \Bbbk$, $\phi\in \Hom^\bullet_{R\otimes R}(B',B)$ and $b\in B'$.
		\end{theorem}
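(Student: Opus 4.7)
The map $\Theta$ is well-defined because \Cref{equivZSBim} implies that every $R$-bimodule morphism between Soergel bimodules is automatically $Z$-linear, and so descends to a $\bar{Z}$-linear map modulo $R_+$; the map $\Theta$ is then visibly a morphism of graded right $R$-modules. Since both the source and target of $\Theta$ are additive bifunctors in $B$ and $B'$, and $\Theta$ respects direct sum decompositions and direct summands, I reduce to the case where $B=\BS(\undw)$ and $B'=\BS(\undv)$ are Bott--Samelson bimodules.

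The main induction is on $\ell(\undv)$, establishing the statement simultaneously for all pairs $(\BS(\undv),B)$ with $B$ any Soergel bimodule. For the inductive step, write $\undv=\undv's$ so that $\BS(\undv)=\BS(\undv')\otimes_R B_s$. The self-adjointness of $-\otimes_R B_s$ yields
\[
\Hom^\bullet_{R\otimes R}(\BS(\undv')\otimes_R B_s,B)\cong \Hom^\bullet_{R\otimes R}(\BS(\undv'),B\otimes_R B_s),
\]
while \Cref{adjoint} provides the analogous self-adjointness of $-\otimes_{\bar{Z}^s}\bar{Z}$ on $\bar{Z}$-modules. The bridge between the two is the natural compatibility isomorphism
\[
\overline{B\otimes_R B_s}\;\cong\; \bar{B}\otimes_{\bar{Z}^s}\bar{Z}(1)
\]
of $\bar{Z}$-modules, which follows from the equivalence \Cref{equivZSBim} together with the freeness of $Z$ over $Z^s$ on the basis $\{1,\tau_s\}$ (\Cref{Zsbasis}) surviving to a freeness of $\bar{Z}$ over $\bar{Z}^s$ on $\{1,\bar{\tau}_s\}$. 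Combining these three ingredients with the inductive hypothesis applied to the pair $(\BS(\undv'),B\otimes_R B_s)$ closes the induction step.

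The base case is $\undv=\emptyset$, which requires proving $\Bbbk\otimes_R \Hom^\bullet_{R\otimes R}(R,B)\cong \Hom^\bullet_{\bar{Z}}(\Bbbk,\bar{B})$ for all $B\in \sbim$. Using \Cref{equivZSBim}, an $R$-bimodule morphism $R\to B$ corresponds to a $Z$-module morphism $R_e\to B$, which identifies $\Hom^\bullet_{R\otimes R}(R,B)$ with $\bigcap_{x\neq e}\Ker(\calP_x\cdot)\subseteq B$; similarly $\Hom^\bullet_{\bar{Z}}(\Bbbk,\bar{B})=\bigcap_{x\neq e}\Ker(P_x\cdot)\subseteq \bar{B}$, and $\Theta$ is the natural quotient map. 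The main obstacle of the proof lies here: showing that this quotient map is an isomorphism amounts to establishing the identity $\Hom^\bullet_{R\otimes R}(R,B)\cap R_+ B=R_+\cdot \Hom^\bullet_{R\otimes R}(R,B)$, equivalently a suitable vanishing of the obstruction coming from the short exact sequence $0\to R_+B\to B\to \bar{B}\to 0$. I expect this to follow from the freeness of $\Hom^\bullet_{R\otimes R}(R,B)$ as a left $R$-module (\Cref{Hiso}) together with the support filtration and the $\calP$-basis structure of the cohomology submodule $H_{\undw}\subseteq B_w$ developed earlier, which should provide the explicit lifts of elements from $\bar{B}$ back to $B$ required for surjectivity of $\Theta$.
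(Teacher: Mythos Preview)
Your reduction via adjunction to the case $B'=R$ is exactly what the paper does, and your identification of both sides in the base case ($\Gamma_e B$ on the source side, $\bigcap_{x\neq e}\operatorname{Ann}(P_x)$ on the target side) is correct. The gap is in the base case itself.

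The identity you isolate, $\Gamma_e B\cap R_+B = R_+\cdot\Gamma_e B$, is \emph{only the injectivity} of $\Theta$; it does not give surjectivity, and it is not equivalent to $\Theta$ being an isomorphism. Injectivity is the easy half: it follows immediately from the fact that $\Gamma_e B$ (more generally each $\Gamma_x B$) is free as a left $R$-module, so $\Bbbk\otimes_R\Gamma_e B\hookrightarrow\Bbbk\otimes_R B$. Surjectivity is the substantive part, and your sketch does not supply it. What must be shown is: if $\bar b\in\bar B$ does \emph{not} lie in the image of $\Bbbk\otimes_R\Gamma_e B$, then $P_x\cdot\bar b\neq 0$ for some $x\neq e$. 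Neither the freeness of $\Hom^\bullet(R,B)$ nor the submodule $H_{\undw}$ addresses this; $H_{\undw}$ is a red herring here.

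The paper's argument for surjectivity runs as follows. Refine the Bruhat order to a total order $w_1,w_2,\ldots$ and pick $h$ minimal with $\bar b\in\Bbbk\otimes_R\Gamma_{\leq h}B$; set $x=w_h$. The key point, drawn from Abe's structural results on the support filtration, is that multiplication by $\calP_x$ induces an isomorphism
\[
\calP_x\cdot(-):\;\Gamma_{\leq h}B/\Gamma_{\leq h-1}B\;\xrightarrow{\;\sim\;}\;\Gamma_x B,
\]
since on this subquotient $\calP_x$ acts by left multiplication by $(\calP_x)_x=p_x$, and $p_x\cdot\bigl(\Gamma_{\leq x}B/\Gamma_{<x}B\bigr)=\Gamma_x B$. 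Tensoring with $\Bbbk$ over $R$ (all modules involved being $R$-free) preserves this isomorphism, so $P_x\cdot\bar b\neq 0$. This is the missing idea in your plan; once you insert it, the base case closes and the rest of your outline goes through essentially as the paper's proof.
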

		\begin{proof}
		Let $\phi:B'\to B$ be a morphism in $\sbim$. By \Cref{equivZSBim}, it is also a morphism of $Z$-modules, hence the resulting map $\Theta(z\otimes \phi)$ is a map of $\bar{Z}$-modules for any $z\in \Bbbk$.
			
			Because every indecomposable bimodule is a direct summand of a Bott--Samelson bimodule (cf. \Cref{sct}), it is enough to show the theorem for $B,B'$ being Bott--Samelson bimodules. Moreover, by adjunction (Proposition \ref{adjoint} and \cite[Proposition 5.10]{SoeKazhdana}) we can restrict ourselves to the case $B'=R$, that is to show
			$$\Bbbk\otimes_R\Hom^\bullet_{R\otimes R}(R,B)\cong\Hom^\bullet_{\bar{Z}}( \Bbbk,\Bbbk\otimes B).$$
			
			By sending $\phi:R\raw B$ to $\phi(1)$ we get $$\Hom^\bullet_{R\otimes R}(R,B)=\Gamma_{\eid}B.$$ 
			On the other hand, similarly, we obtain
			$$\Hom^\bullet_{\bar{Z}}( \Bbbk,\Bbbk\otimes B)\cong\left\{b\in \Bbbk\otimes_R B\mid P_x\cdot b=0\text{ for all }x\in W\setminus\{\eid\}\right\}=\bigcap_{x\neq \eid} Ann\left(P_x\right).$$ 
			The resulting map $\Bbbk\otimes_R \Gamma_{\eid} B\raw \bigcap_{x\neq \eid} Ann(P_x)\cug \Bbbk\otimes_R B$ is induced by the inclusion $\Gamma_{\eid}B\hookrightarrow B$.
			
			By \cite[Corollary 3.13]{AbeBimodule}, for any $x\in W$ the module $\Gamma_xB$ is free as a left $R$-module. In particular, $\Bbbk\otimes_R \Gamma_{\eid}B\cug \Bbbk\otimes_R B$ and thus $\Theta$ is injective.

			To show that $\Theta$ is also surjective, it is sufficient to show that if $b\in \Bbbk\otimes_R B$ and $b\not \in\Bbbk\otimes_R \Gamma_\eid B$, then there exists $x\in W\setminus \{0\}$ such that $P_x \cdot b\neq 0$.

			Fix an enumeration $w_1,w_2,w_3\ldots$ of the elements of $W$ which refines the Bruhat order.
			Let $\Gamma_{\geq h}B$ be the subset of elements supported in $\{w_i \mid i \leq h\}$.
			Let $h\in \bbN$ be such that $b\in \Bbbk\otimes_R\Gamma_{\leq h}B$ and $b\not\in\Bbbk\otimes_R\Gamma_{\leq h-1}B$. Let $x=w_h$.
			
			Multiplication by $\calP_x$ induces an isomorphism of $R$-bimodules 
			$$\calP_x\cdot(-):\Gamma_{\leq h}B/\Gamma_{\leq h-1}B\xra{\sim}\Gamma_xB.$$
			In fact, by \cite[Corollary 3.18]{AbeBimodule} we have $\Gamma_{\leq h}B/\Gamma_{< h}B\cong \Gamma_{\leq x}B/\Gamma_{<x}B$ and on $\Gamma_{\leq x}$ multiplying by $\calP_x$ is the same as multiplying on the left by $(\calP_x)_x=d_{x,x}=p_x$, hence its image is $p_x\left(\Gamma_{\leq x}B/\Gamma_{<x}B\right)=\Gamma_xB$ by \cite[Proposition 3.19]{AbeBimodule}. 
			As a consequence we obtain an isomorphism of right $R$-modules
			$$P_x\cdot(-):(\Bbbk\otimes_R \Gamma_{\leq h}B)/(\Bbbk\otimes_R\Gamma_{\leq h-1}B)\xra{\sim} \Bbbk\otimes_R\Gamma_xB\hookrightarrow \Bbbk \otimes_R B.$$
	In particular, $P_x\cdot b$ if $b\in \left(\Bbbk\otimes_R\Gamma_{\leq h}B\right)\setminus \left(\Bbbk\otimes_R\Gamma_{\leq h-1}B\right)$, proving the claim.
		\end{proof}
		
%

		\begin{cor}
			If $B$ is an indecomposable Soergel bimodule, then $\bar{B}=\Bbbk \otimes B$ is indecomposable as a $\bar{Z}$-module.
		\end{cor}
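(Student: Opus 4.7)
The plan is to apply the Hom formula of \Cref{homformula} in the case $B'=B$ and read off indecomposability of $\bar B$ from the well-known structure of $\End^\bullet_{\sbim}(B)$ for indecomposable $B$.

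By \Cref{sct}, any indecomposable $B\in\sbim$ is isomorphic to $B_w(n)$ for some $w\in W$ and $n\in\bbZ$; since grading shifts do not alter the endomorphism algebra, it suffices to treat $B=B_w$. The key structural input I would invoke is that $\End^\bullet_{\sbim}(B_w)$ is a graded free left $R$-module concentrated in non-negative degrees, with one-dimensional degree-zero part spanned by $\Iden$. This follows from \Cref{Hiso}: it computes $\grrk_R\End^\bullet_{\sbim}(B_w)=([B_w],[B_w])$, and writing $[B_w]=\bfH_w+\sum_{x<w}h_{x,w}(v)\bfH_x$ in the Kazhdan--Lusztig normalization (consistent with $[B_s]=\bfH_s+v$ of \Cref{Hiso}), with $h_{x,w}\in v\bbZ[v]$ for $x<w$, this rank lies in $1+v^2\bbZ_{\geq 0}[v]$.

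Next, applying \Cref{homformula} with $B'=B$: the explicit formula $\Theta(z\otimes\phi)(z'\otimes b)=zz'\otimes\phi(b)$ is directly checked to be multiplicative, so $\Theta$ promotes to an isomorphism of graded $\Bbbk$-algebras
\[\Bbbk\otimes_R\End^\bullet_{\sbim}(B)\;\cong\;\End^\bullet_{\bar Z}(\bar B).\]
Since $R_+$ is concentrated in strictly positive degrees, tensoring a non-negatively graded free $R$-module with $\Bbbk=R/R_+$ preserves both the non-negativity of the grading and the one-dimensional degree-zero part. Hence $A:=\End^\bullet_{\bar Z}(\bar B)$ is a finite-dimensional, non-negatively graded $\Bbbk$-algebra with $A^0=\Bbbk$.

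In such an algebra every homogeneous element of positive degree is nilpotent and therefore lies in the Jacobson radical of $A$, so $A/\mathrm{rad}(A)=\Bbbk$. This means $A$ is local, forcing $\bar B$ to be indecomposable as a $\bar Z$-module. The only ingredient required beyond \Cref{homformula} itself is the non-negativity statement about $\End^\bullet(B_w)$; this is standard for indecomposable self-dual Soergel bimodules, and is the only point where one implicitly uses that $[B_w]$ is the Kazhdan--Lusztig basis element.
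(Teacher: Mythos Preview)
Your argument is sound when Soergel's conjecture is known to hold (e.g.\ $\Bbbk=\bbR$, by Elias--Williamson), but the paper works over an arbitrary field $\Bbbk$. In positive characteristic the coefficients $h_{x,w}$ of $[B_w]$ in the standard basis need not lie in $v\bbZ[v]$; there are well-known examples (already in small affine types at small primes) where some $h_{x,w}$ has nonzero constant term. Then $([B_w],[B_w])=\sum_x h_{x,w}^2$ has constant term strictly larger than $1$, so $A^0=\End^0_{\bar Z}(\bar B)$ is not one-dimensional and your nilpotency argument does not conclude. You acknowledge this dependence in your last sentence, but it is a genuine extra hypothesis that the statement of the corollary does not impose.

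The fix avoids Soergel's conjecture entirely. The category $\sbim$ is Krull--Schmidt and each $\Hom$-space in a fixed degree is finite-dimensional over $\Bbbk$; hence for $B$ indecomposable the ring $\End^0_{\sbim}(B)$ is a finite-dimensional local $\Bbbk$-algebra. The assignment $\phi\mapsto \Bbbk\otimes\phi$ is a ring homomorphism $\End^0_{\sbim}(B)\to \End^0_{\bar Z}(\bar B)$, and it is surjective because it factors through the degree-$0$ part of the isomorphism $\Theta$ of \Cref{homformula} (your observation that $\Theta$ is multiplicative is still used here). A quotient of a local ring is local, so $\End^0_{\bar Z}(\bar B)$ is local and $\bar B$ is indecomposable. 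The paper gives no proof beyond stating the corollary, but this is presumably the intended argument.
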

		
		From \Cref{sct}, we derive also a formula for the graded dimension of the space of morphisms between Soergel modules:
		\begin{equation}\label{Zhf}
			\grdim \Hom_{\bar{Z}}^\bullet(\Bbbk\otimes B, \Bbbk\otimes B')=(\bar{[B]},[B'])).
		\end{equation}

		\begin{remark}\label{finhomformula}
			Assume $\Bbbk=\bbR$. If $W$ is a finite Coxeter group, then the ring $Z\cong R\otimes_{R^W}R$ (cf. \cite[Theorem 4.3]{FieCombinatorics} and \cite[Lemma 4.3.1]{WilSingular}). Hence $\bar{Z}\cong \Bbbk\otimes_{R^W} R\cong R/R^W_+$ is the coinvariant ring. In particular, $\bar{Z}$ is generated in degree $2$ and the map $R\raw \bar{Z}$ is surjective. Clearly, in this case we can replace $\bar{Z}$ by $R$ (acting on the right) in the statement of Theorem \ref{homformula} and in \eqref{Zhf}.
		\end{remark}

		\section{Counterexamples}\label{counter}
		
		In general, for an infinite Coxeter group it is false that 
		\begin{equation}\label{naivehom}
			\Bbbk\otimes_R\Hom_{R\otimes R}(B,B') \cong \Hom_R(\Bbbk\otimes B, \Bbbk\otimes B').
		\end{equation}
		
		We now discuss two examples where \eqref{naivehom} fails. Furthermore, in the first example, we illustrate an example of an indecomposable Soergel bimodule $B$ such that $\Bbbk\otimes B$ is not indecomposable as a right $R$-module.
		
		\subsection{A counterexample in the affine Weyl group of type \texorpdfstring{$\tilde{A}_2$}{A??}}\label{cex1} Let $\Bbbk=\bbR$. Let $\tilde{W}$ be an affine Weyl group and let $\mathfrak{h}$ be a realization for $\tilde{W}$ in the sense of Kac (as in \cite[Proposition 1.1(2)]{RicLa}). 
		
			Let $W\cug \tilde{W}$ be the corresponding finite Weyl group and $G$ be the corresponding simply-connected semisimple group associated to $W$. Let $\calK=\bbC((t))$ and $\calO=\bbC[[t]]$, and let $\graff:=G(\calK)/G(\calO)$
		denote the affine Grassmannian. Let $\pi_0:G(\calO)\raw G$ be the map defined by sending $t\mapsto 0$, and let $B\subset G$ be a Borel subgroup of $G$. The group $I=\pi_0^{-1}(B)$ is called the \emph{Iwahori subgroup} of $G(\calK)$. The quotient 
		$\flaff=G(\calK)/I$
		is called the \emph{affine flag variety} of $G$. Let $p:\flaff \to \graff$ denote the projection map.
				\begin{prop}\label{trivialproj}
			The fiber bundle $p:\flaff \raw \graff$ is topologically trivial, i.e., $\flaff\cong \graff\times G/B$ as topological spaces.
		\end{prop}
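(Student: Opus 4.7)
The strategy is to factor $p$ through a principal $G$-bundle and establish triviality at that level. First, identify the fiber: since $I = \pi_0^{-1}(B)$, the preimage $p^{-1}(e G(\calO)) = G(\calO)/I$ is canonically homeomorphic to $G/B$ via $\pi_0$, so $p$ is a fiber bundle with fiber $G/B$ and structure group $G(\calO)$ acting through its quotient $G$.

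Next I would reduce the structure group. Let $K_1 := \ker \pi_0 \subset G(\calO)$ be the first congruence subgroup. Topologically $K_1$ is contractible: the rescaling $g(t) \mapsto g(s t)$ for $s \in [0,1]$ is a strong deformation retract of $K_1$ onto $\{e\}$. Moreover, $\pi_0$ splits continuously via the inclusion of constant loops $G \hookrightarrow G(\calO)$, so $G(\calO) \cong K_1 \rtimes G$ as topological groups. Since $K_1 \subset I$, setting $P := G(\calK)/K_1$ one has $\flaff = P/B$, $\graff = P/G$, and hence $\flaff = P \times_G (G/B)$ as the associated $G/B$-bundle to the principal $G$-bundle $\pi : P \to \graff$. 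The problem reduces to showing that $\pi$ is topologically trivial as a principal $G$-bundle; a trivialization $P \cong \graff \times G$ will transport, via the associated bundle construction, to $\flaff \cong \graff \times G/B$.

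For this last step I would use the moduli-theoretic description of $\graff$ as parametrizing pairs $(E, \phi)$ with $E$ a $G$-bundle on $\mathbb{P}^1$ and $\phi$ a trivialization of $E$ over $\mathbb{P}^1 \setminus \{0\}$; the fiber of $\pi$ over $(E, \phi)$ is then the fiber $E_0$ of $E$ at the origin. Since $G$ is simply connected and semisimple, $\pi_1(G) = 0$, so every topological $G$-bundle on $\mathbb{P}^1 \cong S^2$ is trivializable. Exhausting $\graff$ by projective Schubert subvarieties $\graff_{\leq \lambda}$, one can inductively extend local trivializations of $\pi$, the obstructions lying in cohomology of the successive strata with coefficients in low-dimensional homotopy groups of $G$ which vanish in the relevant range.

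The main obstacle I expect is precisely this last assembly: while pointwise triviality is immediate, organising the trivializations continuously across the ind-scheme $\graff$ requires care. I would handle it either by the obstruction-theoretic argument just sketched, or more directly by passing to the homotopy model $\graff \simeq \Omega_e K$ (with $K$ the maximal compact subgroup of $G$), where $\pi$ corresponds to the standard split fibration arising from the classical splitting $LK \cong K \times \Omega K$ of the free loop group.
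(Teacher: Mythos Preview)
Your reduction to the principal $G$-bundle $\pi\colon P=G(\calK)/K_1\to\graff$ is correct and clean. The gap is in the obstruction-theoretic step: the obstructions to extending a section of $\pi$ over the even-dimensional Schubert cells lie in $\pi_{2k-1}(G)$, and while $\pi_1(G)=\pi_2(G)=0$, one has $\pi_3(G)\cong\bbZ$ for simple simply-connected $G$. Since $\graff$ has cells in real dimension $4$ and $H^4(\graff;\bbZ)\neq 0$, the assertion that the obstructions ``vanish in the relevant range'' is not justified; some further input is needed to kill this class, and you have not supplied it.

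Your fallback --- the compact model $\graff\simeq\Omega_{pol}K$ together with the splitting $L_{pol}K\cong\Omega_{pol}K\times K$ --- is exactly the paper's argument, and the paper applies it directly without the principal-bundle detour. It identifies $\flaff\cong L_{pol}K/T_\bbR$ and $G/B\cong K/T_\bbR$ via Iwasawa, then takes the $T_\bbR$-quotient of the $T_\bbR$-equivariant homeomorphism $\Omega_{pol}K\times K\to L_{pol}K$, $(x,y)\mapsto xy$, obtaining $\graff\times G/B\cong\flaff$ in one stroke. Your factorization through $P$ is conceptually pleasant (it isolates the structure group as $G$) but does not sidestep the loop-group splitting, which is where the content actually lives; the paper's route is shorter because it uses that splitting at the top rather than after a reduction.
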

		\begin{proof}
			We sketch the proof here and refer to \cite[\S 2.3]{PatHodge} for more details
			
			Let $B\subset G$ be a Borel subgroup of $G$ and $T\subset B$ be a maximal torus. Let $K$ be a maximal compact subgroup of $G$ and let $T_\bbR=T\cap K$. The Iwasawa decomposition implies that we have a homeomorphism $G/B\cong K/T_\bbR$.
			The space $G(\bbC[t,t^{-1}])$ is the space of algebraic maps $\bbC^*\raw G$. Let $L_{pol}K$ be the subspace of $G(\bbC[t,t^{-1}])$ consisting of maps that sends $S^1\cug \bbC^*$ into $K$. We have a subspace $\Omega_{pol}K\cug L_{pol}K$ of maps that send $1\in S^1$ to $1\in G$. 
			 Then the inclusion $\Omega_{pol}K\hookrightarrow G(\bbC[t,t^{-1}])$ induces a homeomorphism $\Omega_{pol}K\cong \graff$ by \cite[Theorem 8.6.3]{PSLoop}.
			Similarly, the affine flag variety $\flaff$ can be identified with $L_{pol}K/T_\bbR$, where $T_\bbR=T\cap K$ by \cite[Proposition 8.7.6]{PSLoop}.
			We have a $T_\bbR$-equivariant homeomorphism $\Omega_{pol} K\times K\cong L_{pol}K$. By taking the quotients of both sides by $T_\bbR$ we obtain a homeomorphism $\Omega_{pol} K\times K/T_\bbR\xra{\sim} L_{pol}K/T_{\bbR}$ defined by $(x,yT_\bbR)\mapsto xyT_\bbR$.
			This gives an isomorphism of fiber bundles over $\Omega_{pol} K$:
			\begin{center} 
				\begin{tikzpicture}
					\matrix(m)[matrix of math nodes, column sep=20pt, row sep=20pt]
					{\Omega_{pol} K\times K/T_\bbR & & L_{pol}K/T_\bbR\\
						& \Omega_{pol} K &\\};
					\path[-stealth] (m-1-1) edge node[above]{$\sim$} (m-1-3)
					edge (m-2-2)
					(m-1-3) edge node[below]{$p$} (m-2-2);
				\end{tikzpicture}
			\end{center}
			It follows that the projection $p$ is a topologically trivial fiber bundle.
		\end{proof}
		
		By the Künneth theorem, we obtain 
		\begin{equation}\label{deco1}H^\bullet(\flaff,\bbR)\cong H^\bullet(\graff,\bbR)\otimes_{\bbR} H^\bullet(G/B,\bbR).\end{equation}
		(see also \cite{LeeCombinatorial} for a more detailed description of this isomorphism).

		All cohomology and intersection cohomology groups below are taken with coefficients in $\bbR$.
		
		In \cite{HaeKazhdan} H\"arterich showed that for any $w\in \tilde{W}$ we have $\bbR\otimes B_w\cong \IH^\bullet(\flaff_w)$, where $\flaff_w=\bar{I\cdot wI/I}\cug \flaff$ is the corresponding Schubert variety and $\IH$ denote the intersection cohomology.

		Fix $\barw \in \tilde{W}/W$ and let $\graff_{\barw}\cug \graff$ be the corresponding Schubert variety. Let $w$ be the longest element in the coset $\barw$. Then we have $\flaff_{w}=p^{-1}(\graff_\barw)$. 
		Since $p$ is a topologically trivial fiber bundle, the same holds for the restriction $p:\flaff_{w}\raw \graff_\barw$. We have
		\begin{equation}\label{deco2}
			\IH^\bullet(\flaff_{w})\cong H^\bullet(G/B)[d]\otimes_\bbR \IH^\bullet(\graff_\barw)
		\end{equation}
		where $d=\dim_\bbR(G/B)$. The $\Coh^\bullet(\flaff)$-module structure on $\IH^\bullet(\flaff_{w})$ is given, in terms of the isomorphism \eqref{deco1} and \eqref{deco2}, by $(f\otimes f')(g\otimes g')=fg\otimes f'g'$.
		It follows that if $\IH^\bullet(\graff_\barw)$ decomposes as a $\Sym(H^2(\graff))$-module, then $\Coh^\bullet(\flaff_{w})$ decomposes as an $R$-module.
		
		Now assume further that the group $G$ is simple. Then $\Coh^\bullet(\graff)\cong Z^W$ and it follows $\Coh^2(\graff)$ is one-dimensional and it is generated by $P_u$, where $u$ is the unique simple reflection not in $W$. Therefore $\Sym(H^2(\graff))$ is isomorphic to the polynomial ring $\bbR[P_u]$, with $\deg(P_u)=2$.
		The primitive decomposition given by the hard Lefschetz theorem for $\IH^\bullet(\graff_\barw)$ is
		\begin{equation}\label{primdec}\IH^\bullet(\graff_\barw)= \bigoplus_{s\geq r \geq 0}(P_u)^r pIH^s(\graff_\barw)\end{equation}
		where \[pIH^s(\graff_\barw):= \Ker\left(P_u^{s+1}:IH^{-s}(\graff_\barw)\to IH^{s+2}(\graff_\barw)\right).\]

		 Note that there are very few Schubert varieties $\graff_\barw$ for which we have $\dim \IH^i(\graff_\barw)\leq 1$ for all $i$, and that if $\dim \IH^i(\graff_\barw)\geq 2$ for some $i$ then \eqref{primdec} shows that  $\IH^\bullet(\graff_\barw)$ cannot be indecomposable as a $\Sym(H^2(\graff))$-module.
		This describes how to produce many examples of indecomposable Soergel bimodules $B_w$ such that $\bar{B_w}$ is decomposable.
		
		The smallest explicit example is as follows:
		Let $\tilde{W}$ be the affine Weyl group of type $\tilde{A}_2$, that is, 
		$\tilde{W}:=\langle s,t,u\rangle$ and $m_{st}=m_{tu}=m_{us}=3$.
		Let $W$ be the subgroup generated by $s,t$ and let $w=stutst$, so tht $\bar{w}=stu$. Then $\bbR\otimes_R B_w=\IH^\bullet(\flaff_w)=H^\bullet(G/B)[3]\otimes_\bbR \IH^\bullet(\graff_{stu})$, where $G=SL_3(\bbR)$. 
		We have $\dim \IH^1(\graff_{stu})\geq \dim H^4(\graff_{stu})=2$, since $\Coh^4(\graff_{stu})$ is generated by $P_{su}$ and $P_{tu}$.
		Hence the Soergel module $\bbR\otimes_R B_w$ is decomposable as an $R$-module.

		\subsection{A counterexample in the universal Coxeter group of rank 3}\label{cex2} The following is another smaller counterexample to \eqref{naivehom} where we can see algebraically in more detail what happens.
		Let $W$ be the universal Coxeter group of rank $3$, i.e., $W=\langle s,t,u \rangle $ with $m_{st}=m_{tu}=m_{us}=\infty$. Let $\undw=stustu$ and consider the bimodule $\BS(\undw)$.

		For $\unde\in \{0,1\}^6$ let $c_{\unde}$ be the string basis element defined as in \eqref{c_e}. Consider the element
		$$b:=c_{000011}-c_{000101}+c_{000110}-c_{001010}+c_{001100}
		-c_{010001}-2c_{010010}+$$
		$$+c_{011000}-c_{010100}+c_{100001}-c_{100010}-c_{101000}+c_{110000}\in \BS(\undw).$$
		
		The element $b$ has degree $2$.
		The coefficient of $\bfH_{\eid}$ in the Kazhdan--Lusztig basis element $[B_w]$ is $v^6+v^4$. Hence, the submodule $\Gamma_\eid \BS(\undw)$ lies in degree $\geq 4$. 
		Then $b\not \in \Gamma_\eid(\BS(\undw))$ but it can be checked that the projection $\bar{b}\in \bar{\BS(\undw)}$ belongs to $Ann(R_+)$.
			It follows that the map $\bbR\raw \bbR\otimes_{R} \BS(\undw)$ defined by $1\mapsto \bar{b}$ is a map of right $R$-bimodules which does not arise from any bimodule map $R\raw \BS(\undw)$. Thus, the map $\Theta$ from \Cref{homformula} is not surjective onto $R$-module morphisms. 
		The correctness of this counterexample can be verified via a Sagemath worksheet \cite{sagemath} available at \url{https://lpatimo.github.io/Counterexample.ipynb}.

		\begin{remark}\label{lastremark}
			These two counterexamples discussed above allow us to negatively answer a question posed by Soergel in \cite[Remark 6.8]{SoeKazhdana}. In general, for infinite Coxeter groups, there may exist no non-zero function $c_y\in R\otimes R$ homogeneous of degree $2\ell(y)$ such that $c_y$ is supported on $Gr(\leq y)$ and vanishes on $Gr(<y)$. In fact, if such elements $c_y\in R\otimes_\Bbbk R$ existed, we could use them to play the role of $\calP_y$ in 
			the proof of Proposition \ref{homformula}, and this would imply the isomorphism \eqref{naivehom}. 
		\end{remark}

\def\cprime{$'$}

\Address
\end{document}